\newcommand{\algp}{\mathbf P}
\newcommand{\alga}{\mathbf A}
\newcommand{\pp}{\perp}
\newcommand{\N}{\mathcal{N}}
\newcommand{\ra}{\Rightarrow}
\newcommand{\nlra}{\not\Leftrightarrow}
\newcommand{\tr}{\mathsf{t}}
\newcommand{\fa}{\mathsf{f}}
\newcommand{\nl}{\mathsf{NL}}
\newcommand{\nlas}{{\mathsf{NL}^{\mathrm{as}}}}
\newcommand{\nel}{\mathsf{NeL}}
\newcommand{\nelas}{{\mathsf{NeL}^{\mathrm{as}}}}
\newcommand{\nelw}{{\mathsf{NeL}_{w}}}
\newcommand{\Eqts}{$\mathrm{(Eq)}_{1}^{s}$}
\newcommand{\Mps}{$\mathrm{(MP)}^{s}$}
\newcommand{\Ces}{$\mathrm{(CE)}^{s}$}
\newtheorem{theorem}{Theorem}[section]
\theoremstyle{definition}
\newtheorem{definition}{Definition}[section]
\newtheorem{proposition}[theorem]{Proposition}
\theoremstyle{definition}
\newtheorem{remark}{Remark}
\newtheorem{question}[theorem]{Problem}
\newtheorem{example}[theorem]{Example}
\newtheorem{corollary}[theorem]{Corollary}
\title{Considerations on Everett~J.~Nelson's connexive logic
}
\author{Davide Fazio}
\author{Raffaele Mascella}
\address{Dipartimento di Scienze della Comunicazione, Campus Aurelio Saliceti, Università degli Studi di Teramo, Via R. Balzarini, 1, 64100, Teramo (TE).}
\email{dfazio2@unite.it, rmascella@unite.it}
\begin{document}
\maketitle
\begin{abstract}
This work explores Everett John Nelson's connexive logic, outlined in his PhD thesis and partially summarized in his 1930 paper \emph{Intensional Relations}, which is obtained by extending the system 
$\nl$ (reconstructed by E. Mares and F. Paoli) with a weak conjunction elimination rule explicitly assumed in the former but not in the latter. After a preliminary analysis of Nelson's philosophical ideas, we provide  an algebraic-relational semantics for his logic and we investigate possible extensions thereof which are able to cope with Nelson's ideas with much more accuracy than the original system. For example, we will inquire into extensions whose algebraic-relational models are endowed with irreflexive incompatibility relations, or determine a ``weakly'' transitive entailment. Such an investigation will allow us to establish relationships between some of the trademarks of Nelson's thought and concepts of prominent importance for connexive logic, as e.g. Kapsner's strong connexivity and superconnexivity, as well as between the algebraic-relational semantics of Nelsonian logics and ordered structures that have gained great attention over the past years, namely partially ordered involutive residuate groupoids and (non-orthomodular) orthoposets.  
\end{abstract}
\textbf{Keywords:} Connexive Logic, Everett J. Nelson, incompatibility, relational-algebraic semantics, strong connexivity, superconnexivity. 
\section{Introduction}\label{intro}
Connexive logic is nowadays an established field of research aimed at investigating well-motivated logical systems in the language of Classical Logic (or some expansion thereof) validating some contra-classical theses expressing a (necessary) \emph{connexion} or \emph{coherence} between antecedents (premises) and consequents (conclusions) of sound conditionals (valid inferences). Characteristic theses and conditions for a logic endowed with a binary connective $\to$ for implication and a unary connective $\neg$ for negation to be entitled as fully connexive are the following:
\begin{itemize}
\item[AT1] $\neg(A\to\neg A)$;
\item[AT2] $\neg(\neg A\to A)$;
\item[BT1] $(A\to B)\to\neg(A\to\neg B)$;
\item[BT2] $(A\to\neg B)\to\neg(A\to B)$.
\item[NSY] $(A\to B)\to(B\to A)$ is \emph{not} a theorem (for some $A,B$).
\end{itemize}
(AT1) and (AT2), called \emph{Aristotle's theses}, have a rather fair meaning: no proposition can entail its own negation. (BT1) and (BT2), called \emph{Boethius' Theses}, provide an axiomatic rendering of the fact that no formula $A$ can entail both $B$ and its negation, for any $B$. (NSY), that we may call the \emph{non-symmetry condition}, is aimed at ensuring that $\to$ can be understood as a genuine implication, rather than as an equivalence. This avoids the possibility of considering trivial limit cases (think e.g. to Classical Logic endowed with the usual material equivalence) as proper examples of a connexive logic.\\
Connexive theses have been motivated by different considerations depending on the specific meaning given to implication. \cite{cant2008} argues that connexive systems can formalize indicative natural language conditionals. 
Some authors (e.g. \cite{kapom2017}) suggest that a connexive implication is suitable for modelling counterfactual conditionals (see also \cite{wansinghunter}).  Moreover,  \cite{mccall75} proposes an interpretation of connexive conditionals in terms of physical or ``causal'' implications. 
Also, the results of empirical research on the interpretation of negated conditionals (see \cite{mccall2012,Pf2012,Pf2017,Pf2025,wansingstanford}) suggest that speakers having no previous knowledge of formal logic are inclined to consider connexive conditionals as the sound ones. Some attempts of explaining such evidences have been provided by A. Kapsner in \cite{Kaps2020} within a pragmatic framework. We refer the interested reader to \cite{wansingstanford} for a thorough overview of the topic.\\
A further motivation for endorsing a connexive implication, actually, the one we will be concerned with in this work, comes from the idea that semantic consequence is a content relationship. Over the past years, the need to provide an account of connexive implications  expressing (intensional) relations grounded on meaning rather than on truth values has been advocated by several researchers. On the one hand, a certain amount of literature has been devoted to highlight the possibility of defining well-behaving connexive implications within modal expansions of Classical Logic with an eye to highlight that connexive theses show up whenever strict conditionals (in the sense of C.I. Lewis, see e.g. \cite{Priest}) constrained by imposing further modal conditions on their arguments are taken into account. \cite{Gherardi,Pizzi,PizziWilliamson} are all cases in point. On the other hand, since its modern formulation due to Storrs McCall \cite{MacCall1,MacCall2}, the concept of a connexive implication has been often motivated in terms of \emph{logical incompatibility} between antecedents and consequents of sound conditionals. Indeed, as McCall explains in his PhD thesis \cite{MacCall1}, the notion of a connexive implication arises whenever a conditional is considered sound provided that the negation of its consequent is \emph{intensionally incompatible} with its antecedent. Such a definition dates back to Chrysippus, as passed down by Sextus Empiricus, and highlighted  e.g. by \cite{Kneale}. However, its modern revival has been proposed in the first half of the twentieth century by one, maybe the most unorthodox, of C. I. Lewis' students: Everett J. Nelson. Actually, as McCall points out, Nelson's account of conditionals is somewhat paradigmatic\footnote{See also \cite{Lenzen1} for a further interpretation of Nelson's role in the history of connexive logic.} when it comes to define what a connexive implication is: 
\begin{quote}
It has been given
the name of `connexive' implication in virtue of the fact that a true
conditional of this sort asserts, or comes to be in virtue of, a logical connexion or relation of incompatibility between the antecedent and the
negation of the consequent. [...] `connexive' implication turns out to be identical with what Nelson in 1930 called `intensional' implication, but for the benefit of those philosophers who
boggle at the adjective `intensional' [...] I use `connexive'. \cite[p. 9]{MacCall1} 
\end{quote}
\noindent Nelson outlines his perspective about implication in his PhD thesis \cite{Nelson3}, subsequently summarized in \cite{Nelson1}, and revised in \cite{Nelson2}. His approach to conditionals is intensional, as it aims at grounding the concept of an implication, that he calls \emph{entailment} (to distinguish it from the material conditional), on \emph{meaning} of its arguments. Indeed, {Nelson argues that} the truth of {many instances of} material and strict implication rests essentially on \emph{extrinsic} (extensional) or,  borrowing the Kantian lexicon {he employs in \cite[Ch. IV]{Nelson3}},   \emph{synthetic} relations between truth and, in the latter case, modal statuses of their antecedents and consequents, {regardless of  mutual relationships determined by what they actually state}. And this, in Nelson's view, is exactly the source of paradoxes of entailment that e.g. Lewis attempts rather unsuccessfully to avoid. Nevertheless, if one considers implication as it is used in ordinary reasoning, they recognize that what keeps arguments of a sound conditional together is in fact an \emph{intrinsic} or, again in Kantian terms, \emph{analytic} intensional relation between meanings. Specifically, sound conditionals are those whose consequent is \emph{contained} into the antecedent, in the same sense we mean whenever we say that the conclusion of a valid syllogism is contained into its premises.\\
For the sake of self-containment, we present a summary of Nelson's view of entailment and main motivations he adduces to support it below, see Section \ref{sec:NLanditssema}, and  \cite{MarPa,Nelson1} for details and precise references.

{Nelson defines sound conditionals as those whose antecedent is \emph{inconsistent}, or \emph{incompatible}, with the negation of its consequent. In fact, this definition is not a novelty introduced by Nelson's thought. It had already been discussed in depth by Lewis\footnote{See e.g. \cite[Ch. 5, p. 293]{Lewis1}.} and B. Russell\footnote{See e.g. \cite[Ch. 14, p. 148]{Russell1}.} in order to provide a definition of truth conditions of strict and material implication, respectively. Nevertheless, one of the main contributions to logic of Nelson's investigation is his deep revision of the notion of compatibility and its consequences. Indeed, as the author argues, the lack of a correct understanding of this concept turns out to be one of the main sources of paradoxes of entailment.}\\ Russell defines two propositions $p$ and $q$ being incompatible provided that they are not both true (``not-$p$ \emph{or} not-$q$''). And this yields a definition of ``$p$ \emph{implies} $q$'' as ``not-$p$ \emph{or} $q$'', from which one regains the usual definition of  material implication. Lewis strengthens Russell's definition upon setting $p$ and $q$ incompatible if they are not \emph{compossible} (namely, ``it is \emph{not} possible that \emph{both} $p$ and $q$''). And this leads him to identify ``$p$ \emph{implies} $q$'' with ``it is \emph{not} possible that \emph{both} $p$ and not-$q$'', i.e. with strict implication. As Nelson observes, Russell's and Lewis' notions of incompatibility fall short of taking into account meanings of propositions involved but only their truth values or modal statuses to the effect that there are propositions like e.g. contradictions that, being always false, and so  impossible, are incompatible with any proposition, even with themselves. {In particular, although Lewis recognizes the ``intensional'' nature of modalities, his definition of implication (and so of compatibility) is still strongly dependent on truth-functional conditions carried by connectives it involves, just as material implication is (see \cite[p. 63]{Woods1}).} This is, in Nelson's opinion, not acceptable. Compatibility is a relational concept depending on the meaning of \emph{both} propositions involved\footnote{In a very recent paper W. Lenzen has investigated Nelson's concept of consistency in the realm of modal logic. We refer to \cite{Lenzen} for details.}. Specifically, $p$ and $q$ are incompatible provided that they are mutually exclusive as the former ``contains'' in its meaning the \emph{proper} contrary (i.e. the negation) of the latter (and \emph{viceversa}), namely not-$q$ can be obtained from $p$ (not-$p$ can be obtained from $q$) by logical analysis. And this boils down to define a conditional of the form ``$p$ implies $q$'' sound provided that the meaning of $q$ is contained into the meaning of $p$.\\ 
As Nelson argues in several venues, once considered from {an intensional perspective}, no proposition, even a contradiction in a truth functional sense, does not clash with itself. In other words, propositions are always compatible with themselves. Even more, there are propositions as e.g. ``$2+2\ne 4$'' and ``Napoleon defeated Wellington'' which are compatible with each other although both impossible. \\
The above intuitions are exactly what leads Nelson to assume  connexive principles (although he does not have in mind a true concept of connexivity) as natural \emph{desiderata} when it comes to define the behavior of entailment. Indeed, {if one defines ``$p$ entails $q$'', provided that $p$ is incompatible with not-$q$ in an intensional sense, then since \emph{any} $p$ is always compatible with itself (or, alternatively, does not contain its own negation as part of its meaning) then Aristotle's Theses
\begin{center}
``It is not the case that $p$ implies not-$p$'' and ``It is not the case that not-$p$ implies $p$''
\end{center}
must hold.} Furthermore, since no proposition contains contradictory meanings/propositions, then ``$p$ implies $q$'' must always exclude, and so being always incompatible with ``$p$ implies not-$q$''. This boils down to the validity of Boethius' Theses: ``if $p$ implies $q$, then $p$ does not imply not-$q$'' and  ``if $p$ implies not-$q$, then $p$ does not imply $q$''.\\
{As D. J. Bronstein has observed in \cite[p. 167]{Bronstein}, the assumption that any proposition is compatible with itself might hint that, for Nelson, there can be no contradictory proposition at all. Indeed, how can one deny that ``$p$ \emph{and} not-$p$'' expresses (and therefore entails) both $p$ and not-$p$, and is thus self-contradictory (incompatible with itself)? Or, more in general, how can one disagree with the idea that  the conjunction of two incompatible propositions, think e.g. to ``$2<1$ \emph{and} $1=2$''\footnote{We thank an anonymous referee for having provided us with this striking example.} is self-contradictory, as it contains in its meaning mutually exclusive statements?} Nelson's solution involves a deep rethinking of the relationships among entailment, conjunction, disjunction, and negation, as well as of the inference rules long considered hallmarks of correct reasoning.

Classical logic attributes to conjunction an extensional meaning to the effect that an expression of the form ``$p$ and $q$'' is to be meant as ``$p$ is true and $q$ is true''. This, Nelson argues, is, in general, not correct. Indeed, the classical account considers conjunctions as mere ``aggregates'' of propositions. Of course, this is true, but only in case a conjunction is taken ``alone'', i.e. \emph{it is not in the antecedent of an entailment}. In the latter case, a conjunction expresses more than its parts, since it behaves like a whole, namely its arguments  ``function'' together in assessing the consequent. In other words, $r$ is entailed by ``$p$ and $q$'' if it results from the concrete combination of meanings of both $p$ and $q$ in such a way that none of them can be eliminated to the effect that $r$ is still obtainable from the antecedent. {Therefore, even if ``$p$ and not-$p$'' is ``materially'' contradictory, in the sense that it materially implies both $p$ and not-$p$, i.e. two incompatible  propositions and, \emph{a fortiori} its own negation, it is nevertheless not contradictory from an intensional perspective, since, \emph{as a unit} (i.e. in the antecedent of an entailment, cf. \cite{Nelson4}) it does not entail them. A similar argument applies e.g. to ``$2<1$ \emph{and} $1=2$''.}\\ {Actually, in his late reply to Bronstein's objections \cite{Nelson4}, Nelson seems to admit that there might be (``if any'', his words) propositions having incompatible consequences (in his sense), and so contradictory. However,  lacking examples or additional details, his response appears to be a somewhat instrumental effort to ``end the discussion'' rather than an elucidation. }

As a consequence of his perspective {about conjunction}, Nelson  categorically rejects ({although his position becomes more moderate in his later investigations, see \cite{Nelson4} and p.\pageref{nelmodpos} below}) the axiom of conjunction simplification 
\begin{center}
``if $p$ and $q$, then $p$'' 
\end{center}
(for \emph{any} $p$ and $q$) and so of ``if $p$ and not-$p$, then $p$''. Consequently, he rejects also the usual axiom 
\begin{center}
``if $p$ entails $q$ \emph{and} $q$ entails $r$, then $p$ entails $r$'' 
\end{center}
stating the ``transitivity'' of implication. Indeed, assuming the latter would force us to accept any of its instances and so also  
\begin{center}
``if $p$ entails $p$, \emph{and} $p$ entails $r$, then $p$ entails $r$''. 
\end{center}
This conflicts with Nelson's ideas about conjunction as the first conjunct in the antecedent adds nothing to the second in order to assess the consequent (cf. \cite{Nelson4}).\\
Similar arguments applied to show the non validity of conjunction' simplification on intensional grounds lead Nelson to reject the axiom of disjunction introduction
\begin{center}
``if $p$, then $p$ \emph{or} $q$''.
\end{center}
{Indeed, as Nelson argues, from the \emph{truth} of $p$ one can always deduce the \emph{truth} of ``$p$ or $q$'', since, once regarded as statements about \emph{classes}, if e.g. $p$ is  ``$x\in A$'' and $q$ is ``$x\in B$'', then $p$ carries more information than ``$x\in A$ \emph{or} $x\in B$'' ($p$ or $q$). Therefore, whenever the former holds true, the latter does (see \cite[p. 273]{Nelson2}, cf. \cite[p. 448]{Nelson1}). However, Nelson claims, once considered from an intensional perspective, ``$p$ or $q$'' carries information that cannot be inferred from $p$ alone. Indeed, in \cite[p. 448]{Nelson1} Nelson writes:
\begin{quote}
``$p$ entails $p$ or $q$'' cannot be asserted on logical grounds, because from an analysis of $p$ we cannot derive the propositional function ``$p$ or $q$'' where $q$ is a variable standing for just any other propositional function whatsoever.  
\end{quote}
}
As observed by E. Mares and F. Paoli in \cite{MarPa}, Nelson's  ``radical stance'' about the relationship between conjunction and entailment might suggest (although there is not enough evidence to attribute such a view to him) that Nelson has in mind a rather strong  notion of real use in proof. Indeed, it can be argued that, in Nelson's view, real use in proof could be somehow considered as the  ``pragmatic'' rendering of meaning entailment, in the sense that $r$ is entailed by $\{p_{1},\dots,p_{n}\}$ ``in force of their meaning'' if and only if the meaning of $r$ is obtained/produced upon combining (``practically'') the meanings of \emph{all} the $p_{i}$'s.\\ A further feature of Nelson's thought supporting this line of interpretation is his acceptance of the transitivity axiom 
\begin{center}
``if $p$ entails $q$, then, if $q$ entails $r$, $p$ implies $r$''.
\end{center}
\emph{whenever $p,q,r$ are mutually intensionally non-equivalent}. In this case, as Nelson argues, the application of such an axiom would  yield an effective (and ``irredundant'') gain from a logical perspective.
 
To provide a rigorous treatment of intensional logical relations, Nelson outlines a set of seven postulates (axioms) encoding his view of logical notions, and inference rules which he borrows from Whitehead and Russell's \emph{Principia Mathematica}. However, as he premises in several venues, his aim is not to furnish a complete system of logic but just to treat intensional notions formally. In fact, he has even no evidence as to whether his set of postulates and inference rules is consistent or not.\\
Actually, if we confine ourselves to take into account his published works \cite{Nelson1,Nelson2}, piecing together Nelson's  ``original'' system seems a rather hard task. Indeed, in presenting his ``system'' in \cite{Nelson1}, Nelson outlines his postulates, but does not state inference rules he applies for obtaining his results clearly, confining himself to talk rather vaguely about ``a principle of inference and the method of proof employed in the \emph{Principia Mathematica}'' (see \cite[p. 449]{Nelson1}). Although the ``principle of inference'' he refers to is clear, since it coincides with Modus Ponens\footnote{Indeed,  ``the process of inference'' is how Whitehead and Russell call Modus Ponens in \cite[p. 8]{PrincipiaMath1}.}, less evident is which of the various methods of proof contained in Whitehead and Russell's work (see \cite{Oreily} for an account) should be included in his system. Moreover, further problems concerning the notation arise. Indeed, following the notational convention from \emph{Principia Mathematica}, Nelson represents long chains of conjunctions omitting brackets. As already remarked by \cite{MarPa}, this might hint that Nelson means conjunction as associative but there is not enough evidence about it. Moreover, he uses the same convention but makes no (at least explicit) reference to associativity not even in his PhD thesis, where a discussion of properties of connectives he assumes is carried out at length.   

The first and, to the best of our knowledge, only attempt to regard Nelson postulates and inference rules as a true formal system has been carried out by E. Mares and F. Paoli in \cite{MarPa} where, beside providing a historical analysis of Nelson's logic in relationship with Lewis' school, they outline a Hilbert calculus $\nl$ which extends Nelson's axioms with inference rules for Modus Ponens, Adjunction, uniform instantiation of propositional variables and replacement of equivalents. Providing a semantics for their system has been an open problem until very recent times. See \cite{wansingstanford} and \cite{Wans3}.
 
In this paper, we investigate Nelson' system of logic as outlined in his PhD thesis \cite{Nelson3}, partially summarized in \cite{Nelson1}, and revised in \cite{Nelson2}. It will turn out that the system $\nl$ originally regained by Mares and Paoli, once presented as Nelson does in his first work, namely with the addition of a weak inference rule for conjunction elimination (see Section \ref{sec:NLanditssema}),  is sound and complete with respect to a class of structures consisting of an algebra endowed with primitive operations which are the semantical counterpart of conjunction, negation and compatibility connectives, an external binary ``incompatibility'' relation over it, and two external constants/truth values. Starting from the basic system here called  (we admit, a little inappropriately) again $\nl$ and its extension $\nel$ with stronger inference rules obtained upon eliminating the constraint that premises be theorems, we delve into an investigation of some further  extensions of $\nel$ and $\nl$ which, in our opinion, are worth of attention since they are arguably able to cope with Nelson's perspective on logic with better accuracy than $\nl$. For example, in the light of the acquired semantics, we will deal with the problem of obtaining a logic whose non-trivial models are endowed with an irreflexive incompatibility relation. It will turn out that such an investigation allows us to establish a link between Nelson's  view and some concept which have been introduced in the realm of connexive logic investigations only in recent times, namely Kapsner' \emph{strong connexivity} and \emph{superconnexivity} \cite{Kap1}. Indeed, as we will see, an extension of Nelson's logic is sound and complete w.r.t. a class of models endowed with an irreflexive incompatibility relation if and only if, on the syntactical side, it allows to derive a weak superconnexive thesis encoding the idea of treating the failure of Aristotle's Theses as explosive. Moreover, such an extension will be also strongly connexive if and only if it allows to derive a further weakly superconnexive thesis establishing the ``explosiveness'' of the failure of Boethius' Theses.\\  Furthermore, we will discuss extensions of Nelson's logic(s) with inference rules whose acceptance by Nelson can be deduced from his writings. Specifically, upon considering the extension of $\nl$ and $\nel$ with a rule of weak disjunction introduction, it will turn out that such a logic satisfies the principle of explosion (\emph{ex contradictione quodlibet}) and so, as it will be shown, is strongly connexive and  weakly superconnexive.\\
Along the paper we will also provide some reflection on the non-symmetry of entailment as well as on \emph{hyperconnexivity} (see \cite{mccall2012}). It will turn out that ``Nelsonian'' logics determine an entailment relation which is non-symmetric and non-hyperconnexive in a strong sense. Indeed, \emph{any} model of these logics provides a counterexample to symmetry as well as to hyperconnexivity on pain of triviality. Such a property is, to the best of our knowledge, not very common among connexive logics outlined in the literature.\\
In the last part of the paper, we will introduce a further extension of Nelsonian logics by means of a weak inference rule for the transitivity of implication and we will argue for its compatibility with Nelson's philosophy. Among interesting features this logic  enjoys, it will turn out that its models yield incompatibility relations which allow to define a partial order turning their algebraic reduct in a commutative residuated groupoid with antitone involution having an underlying poset with antitone involution which is never a lattice, can  always be completed (via its Dedekind-MacNeille completion) to an ortholattice that, in turn, is never neither Boolean nor an orthomodular lattice.

The research carried out in this work does not claim to be  exhaustive. This is witnessed, among other things, by open problems which are from time to time stated along the next pages. Nor it has been conceived to be an attempt of defending Nelson's philosophical conceptions and, \emph{a fortiori}, {his justification of connexive principles}. Our primary goal is to establish a foundation for a systematic investigation of Nelson's system(s), which, at least in our opinion, have received far too little attention in recent years.    

This work is organized as follows. In Section \ref{prel} we dispatch some basic concepts from Abstract Algebraic Logic that will be expedient for the development of our arguments. Section \ref{sec:NLanditssema} will be devoted to outline Nelson' system and some slight variant thereof. In Section \ref{sec:algsemnl} we will investigate the algebraic relational semantics of Nelson systems presented in the previous section. Some remarks on the relationship between our models and the theory of logical matrices will be provided. In Section \ref{sec:variantsNL} and Section \ref{sec:n1} we will discuss some further extension of Nelsonian logics with inference rule for the irreflexivity of incompatibility and the weak transitivity of entailment. We conclude in Section \ref{sec:concl}.

\section{Preliminaries}\label{prel}
In this section we provide an (of course, not exhaustive) overview of basic concepts from Abstract Algebraic Logic that will turn out to be useful for the development of our arguments. We refer the interested reader to \cite{Font16}  for details.
\subsection{Some algebraic logic}
Let $\mathcal{L}=\{f_{i}\}_{i\in I}$ be an algebraic language of a fixed type, and let $\mathbf{Fm}_{\mathcal{L}}$ be the absolutely free algebra in the language $\mathcal{L}$ generated by an infinite countable set $Var$ of variables. A \emph{consequence relation} on  $\mathrm{Fm}_\mathcal{L}$ is a relation $\vdash\ \subseteq \mathcal{P}(\mathrm{Fm}_{\mathcal{L}}) \times \mathrm{Fm}_{\mathcal{L}}$ such that, {for all $\Gamma,\Delta\in\mathcal{P}(\mathrm{Fm}_{\mathcal{L}})$ and $\varphi\in \mathrm{Fm}_{\mathcal{L}}$,}
	
	\begin{enumerate}
		\item[R.] If $\varphi \in \Gamma$, then $\Gamma \vdash \varphi$
		\item[M.] If $\Gamma \vdash \varphi$ and $ \Gamma\subseteq \Delta$, then $\Delta \vdash \varphi$
		\item[C.] If $\Gamma \vdash \varphi$ and $\Delta \vdash \psi$ for all $\psi \in \Gamma$, then $\Delta \vdash \varphi$.
	\end{enumerate}
	
\noindent	A \emph{sentential logic} (a logic, in brief) $\mathsf{L}$ (see \cite[Definition 1.5]{Font16}) is a consequence relation $\vdash_{\mathsf{L}} {}{} \subseteq \mathcal{P}(\mathrm{Fm}_{\mathcal{L}}) \times \mathrm{Fm}_{\mathcal{L}}$, which is \emph{substitution-invariant}  in the sense that, for every substitution (i.e. endomorphism) $\sigma \colon \mathbf{Fm}_{\mathcal{L}} \to \mathbf{Fm}_{\mathcal{L}}$,
	\[
	\text{if }\Gamma \vdash_{\mathsf{L}} \varphi \text{, then }\sigma \Gamma \vdash_{\mathsf{L}} \sigma \varphi.
	\]
Whenever the reference to $\mathsf{L}$ will be clear from the context, we will write $\vdash$ in place of $\vdash_{\mathsf{L}}$. Using a customary notation, a logic  $\vdash_{\mathsf{L}}$ over a language $\mathcal{L}$ will be henceforth denoted by the pair $\mathsf{L}=\langle\mathbf{Fm}_{\mathcal{L}},\vdash_{\mathsf{L}}\rangle$. Moreover, if $\Theta\subseteq\mathcal{P}(\mathrm{Fm}_{\mathcal{L}})\times\mathrm{Fm}_{\mathcal{L}}$, we denote by $\mathsf{L}+\Theta$ the \emph{extension} of $\mathsf{L}$ axiomatized by $\Theta$, i.e. the \emph{least} logic $\mathsf{L}'=\langle\mathbf{Fm}_{\mathcal{L}},\vdash_{\mathsf{L}'}\rangle$ such that $\vdash_{\mathsf{L}}\ \subseteq\ \vdash_{\mathsf{L'}}$ and all elements of $\Theta$ are inference rules of $\mathsf{L'}$. 

Let $\mathsf{L}=\langle\mathbf{Fm}_{\mathcal{L}},\vdash_{\mathsf{L}}\rangle$ be a sentential logic over a language $\mathcal{L}$, $\Gamma\subseteq\mathrm{Fm}_{\mathcal{L}}$, $\alga$ an algebra in the language $\mathcal{L}$ (an $\mathcal{L}$-\emph{algebra}, for short), $h\in\mathbf{Hom}(\mathbf{Fm}_{\mathcal{L}},\alga)$. We set
$$h(\Gamma):=\{h(\gamma):\gamma\in\Gamma\}.$$ 
A \emph{matrix} over $\mathcal{L}$ (or $\mathcal{L}$-matrix) is a pair $\langle\alga,F\rangle$ such that $\alga$ is an algebra in the language $\mathcal{L}$ and $F\subseteq A$ is a set, called a \emph{filter}, of \emph{designated} elements of $\alga$. If $\mathsf{L}=\langle\mathbf{Fm}_{\mathcal{L}},\vdash_{\mathsf{L}}\rangle$ is a logic and $\mathcal{M}=\langle\alga,F\rangle$ is an $\mathcal{L}$-matrix, we say that $\mathcal{M}$ is a \emph{matrix model} of $\mathsf{L}$ provided that, for any $\Gamma\cup\{\varphi\}\subseteq\mathrm{Fm}_{\mathcal{L}}$, and for any homomorphism  $h\in\mathbf{Hom}(\mathbf{Fm}_{\mathcal{L}},\alga)$, it holds that
\[h(\Gamma)\subseteq F\quad\text{ implies }\quad h(\varphi)\in F.\]
In this case, $F$ is called an $\mathsf{L}$-filter.

Let $\langle\alga,F\rangle$ be an $\mathcal{L}$-matrix and let $\theta\subseteq A^{2}$ be a congruence over $\alga$, we say that $\theta$ is \emph{compatible} with $F$ provided that, for any $a,b\in A$, $a\in F$ and $a\theta b$ implies $b\in F$. It is well known that, for any $\mathcal{L}$-matrix $\mathcal{M}=\langle\alga,F\rangle$, the set $\mathrm{Comp}^{F}\alga$ of  congruences over $\alga$ compatible with $F$ forms a complete sublattice of the congruence lattice of $\alga$ w.r.t. set inclusion. Consequently $\mathrm{Comp}^{F}\alga$ admits a maximum element usually denoted by $\Omega^{\alga}(F)$ and called the \emph{Leibniz congruence} of $\mathcal{M}$. If $\mathcal{M}=\langle\alga,F\rangle$ is a matrix model of the logic $\mathsf{L}=\langle\mathbf{Fm}_{\mathcal{L}},\vdash_{\mathsf{L}}\rangle$, we say that $\mathcal{M}$ is \emph{reduced} provided that $\Omega^{\alga}(F)=\Delta$, where $\Delta=\{(a,a):a\in A\}$. The class of all reduced models of a logic $\mathsf{L}$ is customarily denoted by $\mathsf{Mod}^{*}\mathsf{L}$. The class of algebraic reducts of members of $\mathsf{Mod}^{*}\mathsf{L}$ is denoted by $\mathsf{Alg}^{*}\mathsf{L}$. It is well known that any logic is complete w.r.t. the class of all its reduced matrix models, see \cite[Theorem 4.43]{Font16}.\label{leibniz}   

\section{E. J. Nelson's logic and some variants thereof}\label{sec:NLanditssema}
This section is devoted to exploring Nelson's connexive logic as it is presented in \cite{Nelson3} and partially summarized in \cite{Nelson1}, and some variants thereof obtained by strengthening some inference rules Nelson assumes. In Section \ref{sec:variantsNL}, we will consider extensions of logics outlined here with inference rules that are not included explicitly in Nelson's formal system, but whose validity is explicitly or implicitly endorsed by the author in his works. 

{We will present Nelson's system as a calculus of theorems. Therefore, as it will be seen in a while, inference rules of Nelson's calculus apply only to provable sentences.\\
This choice is coherent with \cite{MarPa} and further motivated by the fact that Nelson's aim is to provide a calculus of \emph{assertions} as propositions \emph{certified on logical grounds}, that, according to Lewis' and Russell's notation, namely to his main bibliographical references, see \cite{Lewis1} and \cite{Russell1,PrincipiaMath1}, respectively, he denotes by $\vdash\varphi$, where $\varphi$ is a proposition. Now, due to such a notation, the contemporary reader would be led to identify assertions with theorems, namely with propositions/formulas which are \emph{provable} within a logical system by making use of axioms and inference rules thereof without relying on additional assumptions.   However, at the time in which Nelson writes his first works \cite{Nelson3,Nelson1}, the concept of an assertion is far from being fully transparent.\\ In \cite[p. 282]{Lewis1}, Lewis states: ``the idea of assertion is just what would be supposed -- a proposition may
be asserted or merely considered''. Here, Lewis uses the term ``assertion'' to distinguish occurrences of propositions/formulas which are merely considered as e.g. in ``$\varphi$ is a proposition'', or that appear as subformulas of other formulas, from propositions which have logical significance. In other words, an assertion is any proposition which is somehow ``stated'' (as a truth) and ``judgeable''. Although some of his examples may look quite  misleading (``An asserted propositional function is such as  `A is A' where A is undetermined'', \cite[p. 282]{Lewis1}), Lewis notion of an assertion seems rather independent on the notion of a theorem or of a tautology. And actually, along the pages of \cite{Lewis1}, we find references to assertions which can be either true \emph{or false} (cf. e.g. \cite[p. 214]{Lewis1}). \\  Whitehead and Russell's notion of an assertion, at least as presented e.g. in \cite{PrincipiaMath1} (see also \cite{Kelly1,Russell1}) can be rendered by means of what follows:
\begin{quote}
The sign ``$\vdash$'', called the ``assertion-sign'', [...] is required for distinguishing a complete proposition, which we assert, from any subordinate propositions contained in it but not asserted. In ordinary written language a sentence contained between full stops denotes an asserted proposition, and if it is false the book is in error. [...] For example, if ``$\vdash p\supset p$'' occurs, it is to be taken as a complete assertion convicting the authors of error unless the proposition ``$p\supset p$'' is true (as it is). \cite[p. 8]{PrincipiaMath1}
\end{quote}
As the reader may recognize from the above passage, the notion of ``asserted'' as proposed by Whitehead and Russell may be arguably rendered, although the example they propose is rather unfortunate, as ``claimed to be true'' and so as either an assumption or as the output of an inference.\\ By defining an assertion as a proposition which is ``certified on logical grounds'', Nelson seems to assume a strengthened version of Whitehead and Russell's notion.  However, at least standing to \cite[Ch. VI.2]{Nelson3}, where Nelson discusses the topic at length, his treatment of the concept seems rather convoluted (``E'' stands for entailment, see below):  
\begin{quote}
``$\vdash p E p$'' does mean that the denial of ``$p E p$'' is irrational. For convenience sake, ``$\vdash p E p$'' may be read `` `$p E p$' is true'', or `` `$p E p$' is necessary'', or simply, `` `$p E p$' is asserted''. \cite[Ch. VI.2, p. 72]{Nelson3}  
\end{quote}
If one confines oneself to considering the passage above, one is tempted to attribute to ``assertion'' the status of a  necessary logical truth, or of something which is provable. However, a few lines further down, Nelson is careful to specify that what is certified on logical grounds need not be a logical law (\cite[p. 72]{Nelson3}) but can nevertheless be obtained as an instance of a logical law.\\ It can be argued that Nelson's original intention is to borrow  Whitehead and Russell's definition (which he mentions at the very beginning of \cite[Ch. VI.2]{Nelson3}), although his rendering results in a strengthened version thereof. Nevertheless, whether his notion is the result of a misinterpretation of Whitehead and Russell's thought, or of a reckless explanation, or intentional, we cannot say. Therefore, in absence of ultimate evidences about the way he means it, interpreting Nelson's concept of an assertion (at least as he employs it in his early investigations) needs to rely on a good deal of speculation.}\\
In any case, what can be said with a certain degree of certainty is that Nelson's main goal is not to provide a calculus of tautologies, namely formulas which are true regardless of truth values assigned to variables they contain, but rather of ``propositions asserting intensional relations between propositions'', or, as he means them borrowing Kant's perspective,  ``analytic propositions'' (see \cite[Ch. IV]{Nelson3}). An analytic or intensional relation between propositions is a relation holding between propositions in force of their \emph{meaning}, and so necessary and independent on their truth values.
 
We present Nelson's calculus and logic as follows. Let $Var$ be an infinite countable set of variables and let $\mathbf{Fm}_{\mathcal{L}}$ be the absolutely free algebra generated by $Var$ in the language $\mathcal{L}=\{\otimes,\circ,{}^{*}\}$, where  $ar(\otimes)=2=ar(\circ)$, and $ar({}^{*})=1$. $\otimes$, $\circ$ and ${}^{*}$ will stand for (intensional) conjunction\footnote{Note that,  while \cite{MarPa} denotes conjunction with the more traditional $\land$, here we use (the reminiscent of linear logic) $\otimes$ in order to avoid the reader to be tempted of attributing to it any property of its classical counterpart.}, ``compatibility'' and negation, respectively.\\
Let us set $\varphi\ra\psi:=(\varphi\circ\psi^{*})^{*}$, $\varphi\Leftrightarrow \psi:=(\varphi\ra\psi)\otimes(\psi\ra\varphi)$,   $\varphi\not\Leftrightarrow\psi:=(\varphi\Leftrightarrow\psi)^{*}$, and \[\varphi\nlra\psi\nlra\chi:=((\varphi\nlra\psi)\otimes(\varphi\nlra\chi))\otimes(\psi\nlra\chi).\]
$\mathsf{NL}$ is the pair $\langle\mathbf{Fm}_{\mathcal{L}},\vdash_{\mathsf{NL}}\rangle$, where $\vdash_{\mathsf{NL}}$ is the derivability relation (to be meant as customary) induced by the axiom and inference rule schemes in Figure \ref{fig:nelson}. Axioms (A1)-(A7) are explicitly assumed by Nelson in \cite{Nelson3,Nelson1}\footnote{It is worth noticing that, in outlining his logical system,  Nelson employs a notation which is, apart from the use of $\circ$ for compatibility/consistency, very different from our. Indeed, in his works $\Leftrightarrow$ ($\not\Leftrightarrow$) is expressed by $=$ ($\ne$), ${}^{*}$ is $-$, $\otimes$ is $.$ and (the term-defined) entailment $\ra$ is $E$ (to distinguish intensional entailment from material implication $\supset$). While Nelson introduces a specific symbol $/$ for inconsistency, we will not make use of it in the sequel. See e.g. \cite[p. 444--446]{Nelson1}, cf. \cite{MarPa}.}. Furthermore, Nelson states (MP) and (Adj) at once with the principle
\begin{quote}
If a proposition, or all the propositions of a logical product, are asserted, separately, or otherwise, but as such, then whatever is entailed by or is identical with that proposition, or that logical product, is categorically asserted. \cite[Item 1.91, pp. 141]{Nelson3} 
\end{quote}
We observe that the quotation expresses, among other things, the validity of the inference rule assumed by Whitehead and Russel in \cite{PrincipiaMath1}\footnote{See \cite{Oreily} for an useful overview and historical analysis of the propositional logic of Principia Mathematica.}
\begin{prooftree}
\AxiomC{$\vdash\varphi$}
\AxiomC{$\vdash\varphi\Leftrightarrow\psi$}\RightLabel{$\mathrm{(MP)}_{1}$}
\BinaryInfC{$\vdash\psi$} 
\end{prooftree}
However, it can be easily derived by means of (MP) and (CE), or by $\mathrm{(Eq)}_{1}$.\\
The validity of (CE) is expressed by the following principle:
\begin{quote}
Any proposition unconditionally asserted within a complex, is asserted separately; that is, outside the complex.\cite[Item 1.9, p. 141]{Nelson3}
\end{quote}
Indeed, as Nelson explains, this principle expresses the logical validity of ``the passage [...] from a material conjunction to one of the conjuncts; e.g., if $p$ and $q$ are both true, then we can assert $p$'' \cite[p. 142]{Nelson3}.\\
$\mathrm{(Eq)}_{1}$ is nothing but the formal rendering of the fact that ``any proposition may be substituted for any proposition that is intensionally equivalent to it'' \cite[p. v]{Nelson3}.\\
\begin{figure}
\fbox{
  \begin{minipage}[l]{\textwidth}
\begin{enumerate}[{A}1]
\item $\varphi\ra\varphi$
\item $(\varphi\circ\psi)\ra(\psi\circ\varphi)$
\item $\varphi\ra \varphi^{**}$
\item $(\varphi\ra\psi)\ra(\varphi\circ\psi)$
\item $(\varphi\otimes\psi)\Leftrightarrow(\psi\otimes\varphi)$
\item $((\varphi\otimes\psi)\ra\chi)\ra((\varphi\otimes\chi^{*})\ra\psi^{*})$
\item $(\varphi\nlra\psi\nlra\chi)\ra((\varphi\ra\psi)\ra((\psi\ra\chi)\ra(\varphi\ra\chi))$
\end{enumerate}
\begin{center}
\AxiomC{$\vdash\varphi\ra\psi$}
\AxiomC{$\vdash\varphi$}\RightLabel{(MP)}
\BinaryInfC{$\vdash\psi$}
\DisplayProof\quad
\AxiomC{$\vdash\varphi$}
\AxiomC{$\vdash\psi$}\RightLabel{(Adj)}
\BinaryInfC{$\vdash\varphi\otimes\psi$}
\DisplayProof\quad
\AxiomC{$\vdash\varphi\Leftrightarrow\psi$}
\AxiomC{$\vdash\chi$}\RightLabel{$\mathrm{(Eq)}_{1}$}
\BinaryInfC{$\vdash\chi'$} 
\DisplayProof,\\
\quad\\
\quad\\
\AxiomC{$\vdash\varphi\otimes\psi$}\RightLabel{(CE)}
\UnaryInfC{$\vdash\varphi$}
\DisplayProof,
\end{center}
where $\chi'$ has been obtained upon replacing one or more occurrence of $\varphi$ by $\psi$ in $\chi$.
  \end{minipage}

}
\caption{The formal system $\nl$}
\label{fig:nelson}
\end{figure}
Note that our presentation of $\mathsf{NL}$ lacks the rule of \emph{uniform substitution of propositional variables}, that Nelson and \cite{MarPa} assume explicitly while in this venue it is taken as implicit, since we work with axiom/inference rules schemas rather than with simple axioms and inference rules. Moreover, we observe that, in presence of $\mathrm{(CE)}$ and (Adj), ($\mathrm{A5}$) can be provably replaced by the axiom schema 
\[\varphi\otimes\psi\ra\psi\otimes\varphi,\tag{$\mathrm{A5}^{*}$}\label{a5bis}\]
as they are interderivable.\\
It is important to observe that Mares and Paoli's reconstruction of $\nl$ (see e.g. \cite{MarPa}) is slightly different from ours, since they do not assume (CE) and $\mathrm{(Eq)}_{1}$, and they take in place of the latter the weaker inference rule 
\begin{prooftree}
\AxiomC{$\vdash\varphi\Leftrightarrow\psi$}
\RightLabel{$\mathrm{(Eq)}$}
\UnaryInfC{$\vdash\chi\Leftrightarrow\chi'$} 
\end{prooftree}
where $\chi'$ is obtained upon replacing one or more occurrences of $\varphi$ by $\psi$ in $\chi$. {This is because their reconstruction of Nelson's system of logic relies primarily on his published article, namely \cite{Nelson1}, where Nelson does not clearly specify the inference rules his system employs and makes no mention of the (CE) rule. In his doctoral thesis, this rule is included among inference schemas of the system, whereas in the cited article, it is merely stated as an acceptable principle.}\\ Of course, in presence of (CE), the other inference rules of $\nl$ make $\mathrm{(Eq)}_{1}$ and (Eq) interderivable. Nevertheless, we are not aware if this still holds whenever (CE) is dropped. Therefore, we  believe that the following problem is worth of consideration
\begin{question}Prove or disprove that (Eq) and $\mathrm{(Eq)}_{1}$ are interderivable in presence of axioms (A1)-(A7) and inference rules (MP),(Adj) of $\mathsf{NL}$.
\end{question}
 
As recalled in the Introduction, in  \cite{Nelson3,Nelson1} Nelson refuses explicitly to consider the axiom of \emph{conjunction simplification} \[\varphi\otimes\psi\ra\varphi.\tag{S}\label{simpl}\] as a suitable assumption when it comes to formalize a logic of intensional entailment.  Indeed, in his view, \emph{when considered in relation with entailment}, the conjunction $\varphi\otimes\psi$ of $\varphi$ and $\psi$ has not to be considered as a mere ``aggregate'' of $\varphi$ and $\psi$, namely it should not be meant extensionally as ``$\varphi$ is true \emph{and} $\psi$ is true''. Instead, $\varphi\otimes\psi$ yields what can be obtained by the ``joint force'' of \emph{both} $\varphi$ and $\psi$, in the sense that $\chi$ is a consequence of $\varphi\otimes\psi$ if and only if $\chi$ is derivable from $\varphi$ and $\psi$ by using both of them \emph{non trivially}. In Nelson's words, ``they function together'' \cite[p. 275]{Nelson2}.\\ {It is important to emphasize that, despite some interpretations put forward in the past suggesting the contrary\footnote{``Nelson sees the difference between the intensional
`and' and the truth-functional `.' as a difference between two propositional connectives,
as a difference marking a difference between two kinds or breeds of conjunction.'' \cite[p. 68]{Woods1}}, Nelson does not consider intensional conjunction and extensional (namely, material) conjunction as two distinct concepts. For him, conjunction is one and the same, although it has a dual nature. It should be understood extensionally when considered in itself, and (always) intensionally when considered in the antecedent of an entailment. In the latter case, it functions, by way of metaphor,  as a comma separating premises of a syllogism, which, to be meaningful, must have a non-vacuous relationship with the conclusion.}

Of course, it naturally raises the question whether adding \eqref{simpl} to $\nl$ yields a consistent logic  but, unfortunately, the answer is negative. Indeed, $\nl +\{\mathrm{\eqref{simpl}}\}$ is \emph{trivial}. For reader's convenience, we outline the proof of this fact  in the following proposition (cf. \cite{Pizzi}).
  
\begin{proposition}\label{prop:trivial}The extension $\nl+\{\mathrm{S}\}$ is inconsistent.
\end{proposition}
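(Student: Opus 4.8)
The plan is to derive \emph{every} formula in $\nl+\{\mathrm{S}\}$. The key first move is to notice that, in the presence of $\mathrm{(S)}$, the antilogism axiom (A6) already yields a restricted \emph{ex falso}: instantiating (A6) with $\chi:=\varphi$ produces $((\varphi\otimes\psi)\ra\varphi)\ra((\varphi\otimes\varphi^{*})\ra\psi^{*})$, and its antecedent is exactly an instance of $\mathrm{(S)}$; so by $\mathrm{(MP)}$ one gets the schema $(\star)$:\ $(\alpha\otimes\alpha^{*})\ra\beta^{*}$, provable for all formulas $\alpha,\beta$.

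Next I would use $(\star)$ and (A4) to produce a single formula provable \emph{together with its own negation}. Unfolding the definition $\varphi\ra\psi=(\varphi\circ\psi^{*})^{*}$, the instance of $(\star)$ with $\alpha:=\varphi$, $\beta:=\varphi$ is literally $((\varphi\otimes\varphi^{*})\circ\varphi^{**})^{*}$; setting $\nu:=(\varphi\otimes\varphi^{*})\circ\varphi^{**}$, this says $\vdash\nu^{*}$. On the other hand the instance of $(\star)$ with $\alpha:=\varphi$, $\beta:=\varphi^{*}$ is $(\varphi\otimes\varphi^{*})\ra\varphi^{**}$, and applying (A4) together with $\mathrm{(MP)}$ to it gives $\vdash(\varphi\otimes\varphi^{*})\circ\varphi^{**}$, i.e. $\vdash\nu$. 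Thus $\vdash\nu$ and $\vdash\nu^{*}$.

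From here the collapse is immediate: $\mathrm{(Adj)}$ gives $\vdash\nu\otimes\nu^{*}$, and the instance of $(\star)$ with $\alpha:=\nu$ together with $\mathrm{(MP)}$ then gives $\vdash\delta^{*}$ for \emph{every} formula $\delta$. To reach an arbitrary formula $\varphi$, observe that $\varphi^{**}$ is $(\varphi^{*})^{*}$ and that $\varphi^{**}\ra\varphi$ is, by definition of $\ra$, $(\varphi^{**}\circ\varphi^{*})^{*}$; both are of the shape $\delta^{*}$, hence both are theorems, and one further $\mathrm{(MP)}$ yields $\vdash\varphi$. Since $\varphi$ is arbitrary, $\nl+\{\mathrm{S}\}$ is trivial, hence inconsistent.

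The only step that will need a moment's thought is the middle one: $(\star)$ alone produces only \emph{negated} theorems, and double negation elimination is not visibly available at this stage, so one cannot read off a contradiction directly. The way around this is to use (A4) to turn the \emph{proved} entailment $(\varphi\otimes\varphi^{*})\ra\varphi^{**}$ into the \emph{unnegated} statement $\nu$, which then contradicts the negated statement $\nu^{*}$ that $(\star)$ also supplies. Everything else is routine; note in particular that the argument invokes only (A4), (A6), $\mathrm{(S)}$, $\mathrm{(MP)}$ and $\mathrm{(Adj)}$, so it requires no prior knowledge that $\nl$ validates Aristotle's or Boethius' theses --- in line with the analogous collapse for consequential implication recorded in \cite{Pizzi}.
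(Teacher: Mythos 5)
Your proof is correct and follows essentially the same route as the paper's: both combine (S) with the antilogism axiom (A6) to obtain a restricted \emph{ex falso} of the form $(\alpha\otimes\alpha^{*})\ra\beta^{*}$, then use (A4) to manufacture a theorem provable together with its own negation, and explode via (Adj) and (MP). The only genuine difference is that the paper upgrades the restricted \emph{ex falso} to $(\alpha\otimes\alpha^{*})\ra\beta$ by invoking $\mathrm{(Eq)}_{1}$ together with the derived equivalence $\psi^{**}\Leftrightarrow\psi$, whereas you keep every consequent negated and note at the end that $\varphi^{**}$ and $\varphi^{**}\ra\varphi$ are themselves of the form $\delta^{*}$; this makes your version slightly more self-contained, since it never appeals to $\mathrm{(Eq)}_{1}$ or to the converse of (A3), at the cost of two extra applications of (MP).
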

\begin{proof}
The conclusion rests on the fact that, for arbitrary $\varphi,\psi\in\mathrm{Fm}_{\mathcal{L}}$, $\vdash_{\nl+\{\mathrm{S}\}}\varphi\otimes\varphi^{*}\ra\psi$ is granted by \eqref{simpl}, (A6), (MP), and $\mathrm{(Eq)}_{1}$, as $\vdash_{\nl+\{\mathrm{S}\}}\psi\ra\psi^{\ast\ast}$ and $\vdash_{\nl+\{\mathrm{S}\}}\psi^{**}\ra\psi$, and so $\vdash_{\nl+\{\mathrm{S}\}}\psi^{**}\Leftrightarrow\psi$, hold. Moreover, one has $\vdash_{\nl+\{\mathrm{S}\}}\varphi\otimes\varphi^{*}\ra\varphi^{*}$ and $\vdash_{\nl+\{\mathrm{S}\}}(\varphi\otimes\varphi^{*}\ra\varphi^{*})^{*}$ by (A4). Therefore, we conclude also $\vdash_{\nl+\{\mathrm{S}\}}(\varphi\otimes\varphi^{*}\ra\varphi^{*})\otimes(\varphi\otimes\varphi^{*}\ra\varphi^{*})^{*}$ from which the desired conclusion easily follows. 
\end{proof}
{We observe that the proof of Proposition \ref{prop:trivial}  rests essentially on that \eqref{simpl} allows us to derive, together with (A6), the formula (a) $\varphi\otimes\varphi^{*}\ra\psi$ (for any $\varphi,\psi\in\mathrm{Fm}_{\mathcal{L}}$) which, as Nelson explains in his late note \cite{Nelson4}, is precisely the reason why he assumes a so strong position against conjunction simplification. Indeed, in response of D. J. Bronstein's criticisms of his notion of entailment \cite{Bronstein}, Nelson writes (notation has been changed according to the present work): 
\begin{quote}
Mr. Bronstein is right in saying that I did not assert $p\otimes q\ra p$ because I wanted to avoid such paradoxes as $p\otimes p^{*}\ra q$. During the six or seven years after I wrote that paper I have changed my mind on that point. I still believe, however, that $p\otimes p^{*}\ra q$ is unacceptable, but that it can be avoided by means other than the rejection of $p\otimes q\ra p$. 
\end{quote}
\label{nelmodpos}Among other things, this passage is expedient for highlighting that, several years after \cite{Nelson1},  Nelson's radical rejection of \eqref{simpl} becomes more moderate (cf. \cite[p. 412]{MarPa}). In fact, Nelson's attitude towards \eqref{simpl} starts to waver already in \cite{Nelson2}, where he proposes to replace (A6) with (the association of conjunctions is our, see below) \[(((\varphi\circ\psi^{*})\otimes(\chi\circ\psi^{*}))\otimes(\varphi\circ\chi^{*}))\otimes(\chi\circ\varphi^{*})\ra((\varphi\otimes\chi\ra\psi)\ra(\varphi\otimes\psi^{*}\ra\chi^{*})),\label{a6ast}\tag{$\mathrm{A6}^{*}$}\]
or the weaker \[(\varphi\circ\psi^{*})\otimes(\chi\circ\psi^{*})\ra((\varphi\otimes\chi\ra\psi)\ra(\varphi\otimes\psi^{*}\ra\chi^{*})),\]
in order to allow the presence of \eqref{simpl} and, at the same time, still avoiding the (at least, direct) derivability of (a), \cite[p. 281]{Nelson2}.\\
As the reader may readily observe, exploring modifications of $\nl$ according to the above ideas would require us to consider logical systems and logics significantly different from those we intend to analyze here. Therefore, given that they warrant separate, in-depth investigations, we defer their examination to future work.  
}

\label{probassocia}In the presentation of his axioms, Nelson does not make use of brackets to make explicit how conjunctions must be associated. This might hint that Nelson assumes (at least implicitly) intensional conjunction to be associative. However, as remarked by \cite[p. 418]{MarPa}, at least if one sticks to \cite{Nelson3,Nelson1,Nelson2}, there is not enough evidence to conclude  whether the omission of an associativity axiom is an oversight or is intentional.\\
Actually, by (CE) and (Adj), the following pair of inference rules is derivable in $\nl$:
\begin{prooftree}
\AxiomC{$\vdash(\varphi\otimes\psi)\otimes\chi$}
\doubleLine
\UnaryInfC{$\vdash\varphi\otimes(\psi\otimes\chi)$} 
\end{prooftree}
where the double line means that the inference rule can be applied top-bottom and the other way around. However, in no place of Nelson's work on intensional logic there is an explicit reference to how conjunctions have to be understood in antecedents (or consequents) of entailments.  Therefore, for the sake of generality and the purposes of this paper, in the sequel we will consider both $\nl$ and its associative extension $\mathsf{NL}^{\mathrm{as}}=\mathsf{NL}+\{\mathrm{\eqref{as1}},\mathrm{\eqref{as2}}\}$, where 
\[(\varphi\otimes\psi)\otimes\chi\ra\varphi\otimes(\psi\otimes\chi)\tag{AS1}\label{as1}\] and \[\varphi\otimes(\psi\otimes\chi)\ra(\varphi\otimes\psi)\otimes\chi.\tag{AS2}\label{as2}\] When dealing with $\mathsf{NL}$, chains of conjunctions of the form $\varphi\otimes\psi\otimes\chi$, (as e.g. the antecedent of (A7)) will be meant as associated to the left  (cf. ($\mathrm{A6}^{*}$)), but this choice has to be meant as purely conventional although the same is done in \textit{Principia Mathematica} (cf. \cite{Oreily,PrincipiaMath1}), which is among Nelson's main bibliographic references for the design of his system.

Until now, we have dealt with variants of $\mathsf{NL}$ whose inference rules apply \emph{only} to theorems. 
{However, in view of issues raised at the beginning of this section and} for the sake of completeness, we believe that two further systems based on stronger versions of Modus Ponens, adjunction, weak conjunction elimination, and replacement of equivalents might still be worth of consideration. Indeed, in this work we will be concerned also with logics obtained by replacing inference rules (MP), (Adj), (CE) and $\mathrm{(Eq)}_{1}$ of $\nl$ and $\nlas$ with the corresponding stronger inference rule schemes
\begin{center}
\AxiomC{$\varphi\ra\psi$}
\AxiomC{$\varphi$}\RightLabel{$\mathrm{(MP)}^{s}$}
\BinaryInfC{$\psi$}
\DisplayProof,
\AxiomC{$\varphi$}
\AxiomC{$\psi$}\RightLabel{$\mathrm{(Adj)}^{s}$}
\BinaryInfC{$\varphi\otimes\psi$}
\DisplayProof,
\AxiomC{$\varphi\Leftrightarrow\psi$}
\AxiomC{$\chi$}\RightLabel{$\mathrm{(Eq)}_{1}^{s}$}
\BinaryInfC{$\chi'$}
\DisplayProof,
\AxiomC{$\varphi\otimes\psi$}\RightLabel{$\mathrm{(CE)}^{s}$}
\UnaryInfC{$\varphi$}
\DisplayProof,
\end{center}
where $\chi'$ is obtained by replacing one or more occurrences of $\varphi$ by $\psi$ in $\chi$.   In what follows, we will refer to $\mathsf{NeL}=\langle\mathbf{Fm}_{\mathcal{L}},\vdash_{\mathsf{NeL}}\rangle$ as the derivability relation induced by the system obtained from $\nl$ upon replacing (MP), (Adj), (CE) and $\mathrm{(Eq)}_{1}$ with $\mathrm{(MP)}^{s}$, $\mathrm{(Adj)}^{s}$, $\mathrm{(CE)}^{s}$ and $\mathrm{(Eq)}_{1}^{s}$, respectively, and we set $\nelas=\nel+\{\mathrm{\eqref{as1}},\mathrm{\eqref{as2}}\}$.  

Making use of customary syntactic arguments, one can prove the inclusions provided by the next  remark summarizing some relationships between systems encountered so far.
\begin{remark}\label{rem:giuseppe}The following hold:
\begin{enumerate}
\item $\vdash_{\nl}\subseteq\vdash_{\nlas}\subseteq\vdash_{\nelas}$;
\item $\vdash_{\nl}\subseteq\vdash_{\nel}  \subseteq\vdash_{\nelas}$. 
\end{enumerate}
\end{remark}
\noindent Moreover, straightforward inductions on the length of proofs, yield the following result. 
\begin{proposition}\label{prop:odifreddi}For any $\varphi\in\mathrm{Fm}_{\mathcal{L}}$:
\begin{enumerate}
\item $\vdash_{\nl}\varphi$ iff $\vdash_{\nel}\varphi$;
\item $\vdash_{\nlas}\varphi$ iff $\vdash_{\nelas}\varphi$.
\end{enumerate}
 
\end{proposition}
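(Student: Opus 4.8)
The plan is to dispatch both biconditionals with one and the same routine induction. The left-to-right implications need no work: by Remark~\ref{rem:giuseppe} we already have $\vdash_{\nl}\ \subseteq\ \vdash_{\nel}$ and $\vdash_{\nlas}\ \subseteq\ \vdash_{\nelas}$, so in particular every $\nl$-theorem is an $\nel$-theorem and every $\nlas$-theorem is an $\nelas$-theorem. Thus the whole content of the statement is the converse, namely that passing to the stronger rules $\mathrm{(MP)}^{s}$, $\mathrm{(Adj)}^{s}$, $\mathrm{(CE)}^{s}$, $\mathrm{(Eq)}_{1}^{s}$ does not enlarge the set of provable formulas. I would prove $\vdash_{\nel}\varphi\Rightarrow\vdash_{\nl}\varphi$ in detail; the case $\vdash_{\nelas}\varphi\Rightarrow\vdash_{\nlas}\varphi$ is literally the same argument, the only difference being that (AS1) and (AS2) are now axioms available on both sides.

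First I would fix a $\nel$-derivation of $\varphi$ from the empty premise set: a finite sequence $\psi_{1},\dots,\psi_{n}=\varphi$ in which each $\psi_{i}$ is either an instance of an axiom schema (A1)--(A7), or is obtained from formulas occurring earlier in the sequence by one application of $\mathrm{(MP)}^{s}$, $\mathrm{(Adj)}^{s}$, $\mathrm{(CE)}^{s}$ or $\mathrm{(Eq)}_{1}^{s}$. Since there are no premises, there are no ``assumption lines'', so each $\psi_{i}$ is in particular a $\nel$-theorem. I then claim, by induction on $i$, that $\vdash_{\nl}\psi_{i}$ for all $i\le n$; instantiating at $i=n$ gives the desired $\vdash_{\nl}\varphi$. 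The case of an axiom instance is trivial. For the inductive step, suppose $\psi_{i}$ is the conclusion of a rule application whose premises lie among $\psi_{1},\dots,\psi_{i-1}$; by the inductive hypothesis these premises are all $\nl$-provable. Now comes the key (and only) observation: once its premises are $\nl$-theorems, an application of $\mathrm{(MP)}^{s}$ (resp. $\mathrm{(Adj)}^{s}$, $\mathrm{(CE)}^{s}$, $\mathrm{(Eq)}_{1}^{s}$) is nothing but an application of the weak rule (MP) (resp. (Adj), (CE), $\mathrm{(Eq)}_{1}$) of Figure~\ref{fig:nelson}, since those weak rules are precisely the restrictions of the strong ones to provable premises in the $\vdash$-notation. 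Hence $\psi_{i}$ is derived from $\nl$-theorems by a rule of $\nl$, i.e. $\vdash_{\nl}\psi_{i}$, closing the induction.

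For item~(2) the same induction applies verbatim to a $\nelas$-derivation of $\varphi$ from $\emptyset$, using the axioms (A1)--(A7), (AS1), (AS2) together with the strong rules; since (AS1) and (AS2) are axioms of $\nlas$ too, every line is seen to be $\nlas$-provable. I do not expect a genuine obstacle here: the only point that calls for a little care --- and is easy to overlook --- is the book-keeping remark that a derivation \emph{from the empty set of premises} carries no undischarged assumptions, which is exactly what licenses the inductive hypothesis to be applied to the premises of each rule step, and hence what lets us trade every strong rule for its weak counterpart.
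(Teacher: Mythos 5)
Your proof is correct and is exactly the argument the paper has in mind: the authors dispose of this proposition with the single remark that it follows by ``straightforward inductions on the length of proofs,'' and your induction --- observing that in a derivation from the empty premise set every line is a theorem, so each strong rule application collapses to its weak counterpart --- is precisely that argument, spelled out.
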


\section{An algebraic-relational semantics for $\nel$}\label{sec:algsemnl} In this section we outline an algebraic-relational semantics for $\nel$, $\nelas$ and their extensions with finitely many rules having a finite number of premises. In view of Proposition \ref{prop:odifreddi}, we will obtain a (weak) completeness proof for $\nl$ and $\nlas$ as well.

 The background idea the semantics of systems outlined in the previous section relies on is that, while intensional conjunction $\otimes$, compatibility $\circ$, negation ${}^{*}$, and so $\ra$ have as a semantic counterpart algebraic (primitive or term-defined) operations over a set of propositions, in order to model semantically properties of entailment one has to introduce an \emph{external} relation of incompatibility between propositions (that still behaves adequately w.r.t. $\circ$), and between propositions and ``external'' truth values to the effect that ``being true'' can be rendered in terms of ``incompatibility with falsehood''.

We start from the concept of a weak $\mathcal{N}$-algebra. 

\begin{definition}A weak $\mathcal{N}$-algebra is an algebra $\alga=(A,\otimes,\circ,{}^{\ast})$ of type $(2,2,1)$ such that:
\begin{enumerate}
\item $(A,\otimes)$ and $(A,\circ)$ are commutative \emph{groupoids};
\item ${}^{\ast}:A\to A$ is an \emph{involution}, namely it satisfies \[x^{\ast\ast}\approx x.\]
\item The following hold:\[(x\otimes y)\circ z\approx (x\otimes z)\circ y.\tag{ex}\label{ex}\]
\end{enumerate}
Moreover, if $(A,\otimes)$ is also a \emph{semigroup} (i.e. $\otimes$ is associative), then $\alga$ will be simply called an $\mathcal{N}$-algebra.  
\end{definition}
\noindent Of course, weak $\N$-algebras ($\N$-algebras) form a variety henceforth denoted by $\mathcal{N}^{w}$ ($\mathcal{N}$).\\ 
Let us introduce a new pair $\{\mathsf{t},\mathsf{f}\}$ of symbols such that $\mathsf{t}\ne \mathsf{f}$. In the sequel, we assume that $\{\tr,\fa\}$ is always \emph{disjoint} from universes of algebras we will be concerned with (unless said otherwise). Moreover, for any $A\ne\emptyset$, we set $\overline{A}:=A\cup\{\mathsf{t},\mathsf{f}\}$\label{pippo}. $\tr$ and $\fa$ will stand for ``truth'' and ``falsehood'', respectively. 

Let $A$ be a non-empty set and let $R\subseteq A\times A$ be a binary relation over $A$. Using a customary notation, for any $a,b\in A$, $a R b$ will be short for $(a,b)\in R$.
\begin{definition}\label{def:nmod} A $\mathfrak{N}_{w}$-model is a pair $\mathcal{M}=(\alga,\perp,\{\tr,\fa\})$ such that $\alga$ is a weak $\mathcal{N}$-algebra and $\perp\ \subseteq \overline{A}\times\overline{A}$ is such that, for any $x,y,z\in A$,  the following hold:
\begin{enumerate}[(a)]
\item $x\perp x^{*}$;
\item $x\pp y^{*}$ and  $y\pp x^{*}$ {implies} $x=y$; 
\item $x\perp y$ {iff} $x\circ y\perp{\tr}$;
\item $x\perp{\tr}$ {iff} $x^{*}\perp{\fa}$;
\item $x\perp{\fa}$ and $y\perp{\fa}$ {iff} $x\otimes y\perp{\fa}$;
\item $(x\circ y^{*})^{*}\perp(x\circ y)^{*}$;
\item $x\perp y$ {and} $x\perp{\fa}$ {implies} $y\perp{\tr}$;
\item $x\nlra y\nlra z\perp((x\ra y)\ra((y\ra z)\ra(x\ra z)))^{*}$.
\end{enumerate}
\end{definition}
\noindent Whenever a $\mathfrak{N}_{w}$-model $\mathcal{M}=(\alga,\perp,\{\tr,\fa\})$ is such that $\alga\in\mathcal{N}$, it will be simply called an $\mathfrak{N}$-model. The class of $\mathfrak{N}_{w}$-models ($\mathfrak{N}$-models) will be henceforth denoted by $\mathfrak{N}_{w}$ ($\mathfrak{N}$). Given a (weak) $\mathfrak{N}$-model $\mathcal{M}=(\alga,\pp,\{\tr,\fa\})$, $\pp$ will be called an \emph{incompatibility relation} over $\alga$. 

\begin{example}\label{ex:1}Let us consider the triple $\mathcal{M}=(\alga=(\{a,b,c,d,e,f\},\otimes,\circ,{}^{*}),\pp,\{\tr,\fa\})$, such that $\alga=(\{a,b,c,d,e,f\},\otimes,\circ)$ is determined by the following Cayley tables
\begin{center}
\begin{tabular}{c| c c c c c c} $\circ$ & a & b & c & d & e & f\\ 
\hline  
a & a & e & e & a & e & a \\ 

b& e & a & e & a & e & a\\ 
c& e & e & e & a & e & e\\  
d& a & a & a & a & a & a\\  
e& e & e & e & a & c & a\\
f& a & a & e & a & a & a\\ 
\hline 
\end{tabular}\quad\quad
\begin{tabular}{c| c c c c c c} $\otimes$ & a & b & c & d & e & f\\ 
\hline  
a & c & b & c & d & c & b\\ 
b & b & d & b & d & b & d\\ 
c & c & b & c & d & c & b\\  
d & d & d & d & d & d & d\\  
e & c & b & c & d & c & b\\
f & b & d & b & d & b & d\\  
\hline 
\end{tabular}\quad\quad
\begin{tabular}{c| c } $\ast$ &   \\ 
\hline  
a& a \\ 
b& b \\ 
c& d \\  
d& c \\  
e& f \\
f& e \\  
\hline 
\end{tabular}   
\end{center}
and $\pp=\{(a,\fa),(c,\fa),(e,\fa),(a,\tr),(d,\tr),(f,\tr),(a,a),(a,d),(a,f),(b,b),(b,d),(b,f),(c,d),(d,a),\\ (d,b),(d,c),(d,e),(d,f),(e,d),(e,f),(f,a),(f,b),(f,d),(f,e),(f,f)\}$.
 Customary calculations yield that $\mathcal{M}$ is an $\mathfrak{N}$-model.  
\end{example}
 
\begin{remark}\label{rem:prop1}Let $\mathcal{M}=(\alga,\perp,\{\tr,\fa\})$ be a $\mathfrak{N}_{w}$-model. Then, for any $x,y\in A$: $x\perp y$ {implies} $x\circ y^{*}\perp{\fa}$. Indeed, assume that $x\perp y$. By Definition \ref{def:nmod}(c), one has $x\circ y\pp\tr$ and so, by 
Definition \ref{def:nmod}(e), $(x\circ y)^{*}\pp\fa$. By Definition \ref{def:nmod}(f),(g), one has $(x\circ y^{*})^{*}\pp\tr$ and so $x\circ y^{*}\pp\fa$. 
\end{remark}
It is interesting to point out that in any $\mathfrak{N}_{w}$-model $\mathcal{M}=(\alga,\pp,\{\tr,\fa\})$, one can define a notion of a true or false proposition upon considering $x\in A$ true if $x\pp\fa$, i.e. if $x$ is incompatible with falsehood, and  false if $x\pp\tr$, namely if $x$ is incompatible with truth. 

\noindent Given a $\mathfrak{N}_{w}$-model $\mathcal{M}=(\alga,\perp,\{\tr,\fa\})$, let $F^{\mathcal{M}}_{\pp}=\{a\in A:a\perp{\fa}\}$. Let us define $\models_{\mathfrak{N}_{w}}\ \subseteq\mathcal{P}(\mathrm{Fm}_{\mathcal{L}})\times\mathrm{Fm}_{\mathcal{L}}$ upon setting, for any $\Gamma\cup\{\varphi\}\subseteq\mathrm{Fm}_{\mathcal{L}}$,
$\Gamma\models_{\mathfrak{N}_{w}}\varphi$ iff there is $\Gamma'\subseteq\Gamma,\ |\Gamma'|<\omega$, such that, for any $\mathcal{M}=(\alga,\perp,\{\tr,\fa\})\in\mathfrak{N}_{w},$ and  $h\in\mathbf{Hom}(\mathbf{Fm}_{\mathcal{L}},\alga)$: $$h(\Gamma')\subseteq F^{\mathcal{M}}_{\pp}\text{ implies }h(\varphi)\in F^{\mathcal{M}}_{\pp}.$$ Whenever the reference to $\mathcal{M}$ will be clear from the context, we will write just $F_{\pp}$ in place of $F^{\mathcal{M}}_{\pp}$. 
Moreover, if $\mathcal{M}=(\alga,\perp,\{\tr,\fa\})\in\mathfrak{N}_{w}$,  $\mathcal{M}\models \varphi$ will be short for $(h(\varphi),\fa)\in\perp$, for any 
$h\in\mathbf{Hom}(\mathbf{Fm}_{\mathcal{L}},\alga)$. Furthermore, we set $\models_{\mathfrak{N}_{w}} \varphi$ provided that $\mathcal{M}\models\varphi$, for any $\mathcal{M}\in\mathfrak{N}_{w}$. 

\noindent The following remark is obvious.
\begin{remark}\label{rem:models}$\langle\mathbf{Fm}_{\mathcal{L}},\models_{\mathfrak{N}_{w}}\rangle$ is a sentential logic.
\end{remark}

The proof of the next theorem can be carried out by means of a slight modification of well known techniques. However, since the models we deal with are, in some sense, non-standard for algebraic logic, we will run a bit more into details.
\begin{theorem}\label{thm:compl}For any $\Gamma\cup\{\varphi\}\subseteq\mathrm{Fm}_{\mathcal{L}}$, the following holds: 
\[\Gamma\vdash_{\nel}\varphi\text{ iff }\Gamma\models_{\mathfrak{N}_{w}}\varphi.\]
\end{theorem}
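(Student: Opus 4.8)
The plan is to split the biconditional into soundness ($\Rightarrow$), handled by an induction on $\nel$-derivations, and completeness ($\Leftarrow$), handled by a Lindenbaum--Tarski construction of a single ``canonical'' $\mathfrak{N}_{w}$-model. Both directions hinge on one translation observation. Unwinding $\varphi\ra\psi:=(\varphi\circ\psi^{*})^{*}$ and using clauses (c) and (d) of Definition \ref{def:nmod} together with the involution law $x^{**}\approx x$, one gets that for every $\mathfrak{N}_{w}$-model $\mathcal{M}=(\alga,\pp,\{\tr,\fa\})$ and every $h\in\mathbf{Hom}(\mathbf{Fm}_{\mathcal{L}},\alga)$,
\[
h(\varphi\ra\psi)\in F_{\pp}\quad\Longleftrightarrow\quad h(\varphi)\pp h(\psi)^{*};
\]
applying clause (e) to split the conjunction defining $\Leftrightarrow$, this also yields that $h(\varphi\Leftrightarrow\psi)\in F_{\pp}$ iff $h(\varphi)\pp h(\psi)^{*}$ and $h(\psi)\pp h(\varphi)^{*}$, hence $h(\varphi)=h(\psi)$ by clause (b).

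For soundness I would read each axiom scheme through this translation and recognise it as an instance of exactly one model clause: (A1) and (A2) give (a) (the second via commutativity of $\circ$), (A3) gives (a) via the involution law, (A4) gives (f), (A5) gives (a) and (e) via commutativity of $\otimes$, (A6) gives (a) via the identity (ex) and the involution law, and (A7) is literally clause (h). Then I would check that each rule preserves membership in $F_{\pp}$ under an arbitrary $h$: \Mps\ from (g) and (d), \Adjs\ and \Ces\ from (e), and \Eqs\ from (b), since $h(\varphi\Leftrightarrow\psi)\in F_{\pp}$ forces $h(\varphi)=h(\psi)$ and hence $h(\chi)=h(\chi')$. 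An induction on the length of a derivation --- which invokes only finitely many hypotheses from $\Gamma$ --- then gives that $\Gamma\vdash_{\nel}\varphi$ implies $\Gamma\models_{\mathfrak{N}_{w}}\varphi$.

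For completeness I would argue contrapositively. Assuming $\Gamma\not\vdash_{\nel}\varphi$, set $T:=\{\psi:\Gamma\vdash_{\nel}\psi\}$ and $\varphi\equiv_{T}\psi$ iff $\varphi\Leftrightarrow\psi\in T$. Using \Eqs\ (which permits replacing $\equiv_{T}$-equivalent subformulas inside any member of $T$), together with (A1), (A2), \Adjs\ and \Ces, one verifies that $\equiv_{T}$ is a congruence on $\mathbf{Fm}_{\mathcal{L}}$ and that $T$ is a union of $\equiv_{T}$-classes; a handful of $\nel$-derivations --- commutativity of $\circ$ from (A2), the involution law $\varphi\Leftrightarrow\varphi^{**}$ from (A1) instantiated at $\varphi^{*}$ together with (A2) and (A3), and the exchange identity (ex) extracted from (A6) by contraposition (using (A2)) --- then show that $\alga:=\mathbf{Fm}_{\mathcal{L}}/{\equiv_{T}}$ is a weak $\mathcal{N}$-algebra (commutativity of $\otimes$ being exactly (A5)). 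Choosing fresh symbols $\tr\ne\fa$, setting $\overline{A}:=A\cup\{\tr,\fa\}$ and defining $\pp$ on $\overline{A}\times\overline{A}$ by $[\alpha]\pp[\beta]$ iff $\alpha\ra\beta^{*}\in T$, $[\alpha]\pp\fa$ iff $\alpha\in T$, $[\alpha]\pp\tr$ iff $\alpha^{*}\in T$, and arbitrarily (say, never) on pairs whose first component lies in $\{\tr,\fa\}$, the congruence properties make $\pp$ well defined; a direct unwinding of the definitions --- using the weak $\mathcal{N}$-algebra structure just obtained --- verifies that all of (a)--(h) hold: (a) from (A3), (b) from \Adjs\ modulo $\equiv_{T}$, (c) and (d) from the term-definitions and the involution law, (e) from \Adjs, \Ces\ and (A5), (f) from (A4), (g) from \Mps, (h) from (A7). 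Thus $\mathcal{M}:=(\alga,\pp,\{\tr,\fa\})$ is a $\mathfrak{N}_{w}$-model, and the quotient map $h\colon\mathbf{Fm}_{\mathcal{L}}\to\alga$ satisfies $h(\gamma)\in F_{\pp}$ for every $\gamma\in\Gamma$ (as $\gamma\in T$) while $h(\varphi)\notin F_{\pp}$ (as $\varphi\notin T$); since this works for $\Gamma$ itself, and hence for every finite $\Gamma'\subseteq\Gamma$, we conclude $\Gamma\not\models_{\mathfrak{N}_{w}}\varphi$.

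I expect the main obstacle to lie not in the architecture above but in the two blocks of routine-but-lengthy bookkeeping it rests on: the clause-by-clause verification of the translation and of soundness, and --- the more delicate one --- the $\nel$-derivations certifying that $\mathbf{Fm}_{\mathcal{L}}/{\equiv_{T}}$ really is a weak $\mathcal{N}$-algebra, since the involution law and the exchange identity (ex) are not axioms and it is exactly here that the rule \Ces\ (which Mares and Paoli do not assume) is exploited. One structural remark makes the construction essentially forced: clause (b) of Definition \ref{def:nmod} is precisely what quotienting $\mathbf{Fm}_{\mathcal{L}}$ by $\Leftrightarrow$ provides, and is simultaneously what makes \Eqs\ sound, so the algebra, the relation $\pp$ and the calculus match clause by clause. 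Finally, since $\nl$ and $\nel$ (resp.\ $\nlas$ and $\nelas$) prove the same theorems by Proposition \ref{prop:odifreddi}, taking $\Gamma=\emptyset$ also delivers the promised weak completeness of $\nl$ and $\nlas$.
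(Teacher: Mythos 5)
Your proposal is correct and follows essentially the same route as the paper: soundness by induction on derivations, reading each axiom through the translation $h(\varphi\ra\psi)\in F_{\pp}$ iff $h(\varphi)\pp h(\psi)^{*}$ and checking the rules against clauses (b), (e), (g); completeness via the Lindenbaum--Tarski quotient of $\mathbf{Fm}_{\mathcal{L}}$ by provable interderivability over $\mathrm{Cn}^{\nel}(\Gamma)$, with $\pp$ defined exactly as you do. The only cosmetic difference is that the paper defines the congruence by $\psi\ra\chi,\chi\ra\psi\in\mathrm{Cn}^{\nel}(\Gamma)$ rather than by $\psi\Leftrightarrow\chi\in T$, which coincides with your definition in the presence of $\mathrm{(Adj)}^{s}$ and $\mathrm{(CE)}^{s}$.
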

\begin{proof}The left-to-right-direction can be proven by means of a customary induction on the length of proofs. Let $(\mathbf{A},\pp,\{\tr,\fa\})\in\mathfrak{N}_{w}$, $\varphi,\psi,\chi\in\mathrm{Fm}_{\mathcal{L}}$, and $h\in\mathbf{Hom}(\mathbf{Fm}_{\mathcal{L}},\alga)$ with $h(\varphi)=a,\ h(\psi)=b,\ h(\chi)=c$. Concerning (A1), one has that $a\perp a^{*}$ implies $a\circ a^{*}\perp\tr$ (Definition \ref{def:nmod}(c)) and so $(a\circ a^{*})^{*}\perp\fa$ (Definition \ref{def:nmod}(d)). (A2), (A3), \eqref{a5bis} and (A7) are straightforward and so they are left to the reader. As regards (A4), note that, by Definition \ref{def:nmod}(f), $(a\circ b^{*})^{*}\perp(a\circ b)^{*}$ which, in turn, entails $(a\ra b)\ra(a\circ b)=((a\circ b^{*})^{*}\circ(a\circ b)^{*})^{*}\pp\fa$. Concerning (A6), one has:
\begin{align*}
h(((\varphi\otimes\psi)\ra\chi)\ra((\varphi\otimes\chi^{*})\ra\psi^{*}))\pp\fa &\text{ iff }((a\otimes b)\ra c)\ra((a\otimes c^{*})\ra b^{*})\pp\fa\\
&\text{ iff }((a\otimes b)\circ c^{*})^{*}\ra((a\otimes c^{*})\ra b^{*})\pp\fa\\
&\text{ iff }((a\otimes c^{*})\circ b^{**})^{*}\ra((a\otimes c^{*})\ra b^{*})\pp\fa\\ 
&\text{ iff }((a\otimes c^{*})\ra b^{*})\ra((a\otimes c^{*})\ra b^{*})\pp\fa
\end{align*}
As the last condition obviously holds, the desired conclusion obtains.\\
As regards proper inference rules of $\nel$, we confine ourselves to consider $\mathrm{(Eq)}_{1}^{s}$ leaving cases involving $\mathrm{(MP)}^{s}$, $\mathrm{(Adj)}^{s}$ and \Ces\ to the reader. Suppose that $h(\chi)\pp\fa$ and  $h(\varphi\Leftrightarrow\psi)\pp\fa$. $h(\varphi\Leftrightarrow\psi)\pp\fa$ implies $(a\ra b)\otimes(b\ra a)=a\Leftrightarrow b\pp\fa $. So, by Definition \ref{def:nmod}(e) one has $(a\circ b^{*})^{*}=a\ra b\pp\fa$ and $(b\circ a^{*})^{*}=b\ra a\pp\fa$ and, by Definition \ref{def:nmod}(c),(d) also $a\pp b^{*}$ and $b\pp a^{*}$. Therefore $h(\chi')=h(\chi)\pp\fa$, for any $\chi'\in\mathrm{Fm}_{\mathcal{L}}$ obtained upon replacing one or more occurrences of $\varphi$ by $\psi$ in $\chi$.
\\
As regards the converse direction, we proceed through a customary Lindenbaum-Tarski type construction. Let $\Gamma\cup\{\varphi\}\subseteq\mathrm{Fm}_{\mathcal{L}}$. Suppose that $\Gamma\not\vdash_{\nel}\varphi$ and let $\mathrm{Cn}^{\nel}(\Gamma)$ be the theory generated by $\Gamma$ in $\nel$. For any $\psi,\chi\in\mathrm{Fm}_{\mathcal{L}}$, let
\[\psi\equiv_{\Gamma}\chi\text{ iff }\psi\ra\chi,\chi\ra\psi\in\mathrm{Cn}^{\nel}(\Gamma).\]
Let us show that $\equiv_{\Gamma}$ (henceforth $\equiv$, in brief) is a congruence over $\mathbf{Fm}_{\mathcal{L}}$. Reflexivity and symmetry are trivial. As regards  transitivity, suppose that $\psi_{1}\equiv\chi_{1}$ and $\chi_{1}\equiv\psi_{2}$, i.e. $\psi_{1}\ra\chi_{1},\chi_{1}\ra\psi_{1},\chi_{1}\ra\psi_{2},\psi_{2}\ra\chi_{1}\in\mathrm{Cn}^{\nel}(\Gamma)$. Note that $\psi_{1}\ra\chi_{1},\chi_{1}\ra\psi_{1}\vdash_{\nel}(\chi_{1}\ra\psi_{2})\ra(\psi_{1}\ra\psi_{2})$, by \Eqts,$\mathrm{(Adj)}^{s}$ and \Ces, so $(\chi_{1}\ra\psi_{2})\ra(\psi_{1}\ra\psi_{2})\in\mathrm{Cn}^{\nel}(\Gamma)$ and also $\psi_{1}\ra\psi_{2}\in\mathrm{Cn}^{\nel}(\Gamma)$, by \Mps. Similarly, one proves that $\psi_{2}\ra\psi_{1}\in\mathrm{Cn}^{\nel}(\Gamma)$. Consequently, one has $\psi_{1}\equiv\psi_{2}$. As regards the compatibility of $\equiv$ with respect to operations, one has e.g. that $\psi_{1}\ra\chi_{1},\chi_{1}\ra\psi_{1},\psi_{2}\ra\chi_{2},\chi_{2}\ra\psi_{2}\in\mathrm{Cn}^{\nel}(\Gamma)$ implies $\psi_{1}\otimes\psi_{2}\ra\psi_{1}\otimes\chi_{2},\psi_{1}\otimes\chi_{2}\ra\chi_{1}\otimes\chi_{2}\in\mathrm{Cn}^{\nel}(\Gamma)$, i.e. $\psi_{1}\otimes\psi_{2}\ra\chi_{1}\otimes\chi_{2}\in\mathrm{Cn}^{\nelw}(\Gamma)$, by several applications of \Eqts, \Mps, $\mathrm{(Adj)}^{s}$. The converse can be obtained analogously. So $\psi_{1}\otimes\psi_{2}\equiv\chi_{1}\otimes\chi_{2}$. Cases for the remaining connectives can be handled similarly, and so they are left to the reader.\\
We show that $(\mathbf{Fm}_{\mathcal{L}}/{\equiv},\otimes,\circ,{}^{*})$ is a weak $\mathcal{N}$-algebra. Showing that ${}^{*}$ is an involution can be easily done as in \cite{MarPa}, upon proving that $\vdash_{\nel}\psi^{**}\ra\psi$ holds. Proving that $(\mathbf{Fm}_{\mathcal{L}}/{\equiv},\otimes)$ is a commutative groupoid is almost obvious, as it follows from \eqref{a5bis}. That $(\mathbf{Fm}_{\mathcal{L}}/{\equiv},\circ)$ is a commutative groupoid as well is granted by (A2). Condition \eqref{ex} follows by (A6) and the fact that $\equiv$ is a congruence. Let $\perp_{\Gamma}\ \subseteq\ \overline{\mathrm{Fm}_\mathcal{L}/{\equiv}}\times\overline{\mathrm{Fm}_\mathcal{L}/{\equiv}}$ be defined upon setting, for any $\psi/{\equiv},\chi/{\equiv}\in\mathrm{Fm}_\mathcal{L}/{\equiv}$, $$\psi/{\equiv}\pp_{\Gamma}\chi/{\equiv}\text{ iff }\psi\ra\chi^{*}\in\mathrm{Cn}^{\nel}(\Gamma).$$ Moreover, we set $$\psi/{\equiv}\pp_{\Gamma}\fa\text{ iff }\psi\in\mathrm{Cn}^{\nel}(\Gamma),\text{ and }\psi/{\equiv}\pp_{\Gamma}\tr\text{ iff }\psi^{*}\in\mathrm{Cn}^{\nel}(\Gamma).$$ First, one has to prove that $\pp_{\Gamma}$ (henceforth denoted by $\pp$, for short) is independent from representatives. However, it can be easily shown this is a consequence of \Mps,\Eqts, $\mathrm{(Adj)}^{s}$ and \Ces. Next, we show that $\pp$ satisfies conditions (a)-(h) of Definition \ref{def:nmod}. As regards (a), note that $\psi\ra\psi^{**}$ is a theorem of $\nel$, and so one has $\psi/{\equiv}\pp\psi^{*}/{\equiv}$. (b), (e), (f), (g), (h) are obvious, while (d) is a consequence of $\varphi\equiv \varphi^{**}$. As regards (c), one has $\psi/{\equiv}\perp\chi/{\equiv}$ iff $(\psi\circ\chi^{**})^{*}=\psi\ra\chi^{*}\in\mathrm{Cn}^{\nel}(\Gamma)$ iff $(\psi\circ\chi)^{*}\in\mathrm{Cn}^{\nel}(\Gamma)$ iff   $\psi\circ\chi/{\equiv}\pp\tr$, since $(\psi\circ\chi^{**})^{*}\equiv (\psi\circ\chi)^{*}$. (h) is an easy consequence of (A7). Therefore, $\mathcal{M}=(\mathbf{Fm}_{\mathcal{L}}/{\equiv},\pp,\{\tr,\fa\})\in\mathfrak{N}_{w}$. Moreover, Let $h:\mathbf{Fm}_{\mathcal{L}}\to\mathbf{Fm}_{\mathcal{L}}/{\equiv}$ be the homomorphism obtained upon extending the (usual) mapping such that, for any $x\in Var$, $x\mapsto x/{\equiv}$, to the whole $\mathrm{Fm}_{\mathcal{L}}$. One clearly has that, for any $\Gamma'\subseteq\Gamma$, $|\Gamma'|<\omega$, $h(\Gamma')\subseteq F_{\pp}$ but $h(\varphi)\notin F_{\pp}$. In other words, $\Gamma\not\models_{\mathfrak{N}_{w}}\varphi$.
\end{proof}

In general, building on the proof strategy from Theorem \ref{thm:compl}, one can prove analogous results for \emph{finite} extensions of $\nel$. Let $\Theta\subseteq\mathcal{P}(\mathrm{Fm}_{\mathcal{L}})\times\mathrm{Fm}_{\mathcal{L}}$. We say that $\Theta$ is \emph{finite} provided that $|\Theta|<\omega$ and, for any $(\Gamma,\varphi)\in\Theta$, $|\Gamma|<\omega$. The extension $\nel+\Theta$ is said to be finite provided that $\Theta$ is finite. {Now, for any $\varphi\in\mathrm{Fm}_{\mathcal{L}}$, let us write $\varphi(x_{1},\dots,x_{n})$ to highlight that $\varphi$ contains all and only  propositional variables among  $x_{1},\dots,x_{n}$.  For any $(\Gamma,\varphi)\in\mathcal{P}(\mathrm{Fm}_{\mathcal{L}})\times\mathrm{Fm}_{\mathcal{L}}$ ($|\Gamma|<\omega$), let $(\Gamma,\varphi)^{\pp}$ be the condition  
\[\forall x_{1},\dots,x_{n}[\gamma_{1}(x_{1},\dots,x_{n})\pp\fa\text{ and }\dots\text{ and }\gamma_{m}(x_{1},\dots,x_{n})\pp\fa\text{ imply }\varphi(x_{1},\dots,x_{n})\pp\fa]\label{firstordcondition}
\]
if $\Gamma=\{\gamma_{1}(x_{1},\dots,x_{n}),\dots,\gamma_{m}(x_{1},\dots,x_{n})\}\ne\emptyset$, and \[\forall x_{1},\dots,x_{n}[\varphi(x_{1},\dots,x_{n})\pp\fa],\] otherwise.} Moreover, let $\mathfrak{N}_{w}^{\Theta}$ be the class of all and only the $\mathfrak{N}_{w}$-models satisfying $(\Gamma,\varphi)^{\pp}$, for any $(\Gamma,\varphi)\in\Theta$. Note that, for any $\Theta\subseteq\mathcal{P}(\mathrm{Fm}_{\mathcal{L}})\times\mathrm{Fm}_{\mathcal{L}}$, $\mathfrak{N}_{w}^{\Theta}$ is always non-empty as it contains at least a trivial $\mathfrak{N}_{w}$-model. Furthermore, let $\langle\mathbf{Fm}_{\mathcal{L}},\models_{\mathfrak{N}_{w}^{\Theta}}\rangle$ be defined as at p. \pageref{rem:models}. Upon making good use of the proof strategy from Theorem \ref{thm:compl}, one has the following result.
\begin{theorem}\label{thm:compl2}For any finite extension $\nel+\Theta$ and, for any $\Gamma\cup\{\varphi\}\subseteq\mathrm{Fm}_{\mathcal{L}}$:\[\Gamma\vdash_{\nel+\Theta}\varphi\text{ iff }\Gamma\models_{\mathfrak{N}_{w}^{\Theta}}\varphi.\]
\end{theorem}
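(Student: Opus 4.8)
The plan is to replay the proof of Theorem~\ref{thm:compl} almost verbatim, inserting exactly the two clauses that account for the extra rules collected in $\Theta$; no genuinely new idea is needed. For the left-to-right direction, note first that $\vdash_{\nel+\Theta}$ is finitary (it is induced by a Hilbert-style calculus whose rules all have finitely many premises, since $\Theta$ is finite), so fix a finite $\Gamma'\subseteq\Gamma$ with $\Gamma'\vdash_{\nel+\Theta}\varphi$, an arbitrary $\mathcal{M}=(\alga,\perp,\{\tr,\fa\})\in\mathfrak{N}_{w}^{\Theta}$, and $h\in\mathbf{Hom}(\mathbf{Fm}_{\mathcal{L}},\alga)$ with $h(\Gamma')\subseteq F^{\mathcal{M}}_{\perp}$; we show by induction on the length of a derivation of $\varphi$ from $\Gamma'$ in $\nel+\Theta$ that every line $\delta$ of that derivation satisfies $h(\delta)\in F^{\mathcal{M}}_{\perp}$. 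Lines that are members of $\Gamma'$ are covered by hypothesis; lines that are (instances of) axioms (A1)--(A7), or obtained by one of $\mathrm{(MP)}^{s}$, $\mathrm{(Adj)}^{s}$, $\mathrm{(CE)}^{s}$, $\mathrm{(Eq)}_{1}^{s}$, are handled exactly as in the left-to-right part of Theorem~\ref{thm:compl}, whose computations involve only $\perp$ and $F_{\perp}$ and therefore transfer unchanged. The only new case is an application of a rule $(\Gamma_{0},\psi)\in\Theta$, say $\Gamma_{0}=\{\gamma_{1},\dots,\gamma_{m}\}$, through a substitution $\sigma$: by the inductive hypothesis $h(\sigma(\gamma_{i}))\perp\fa$ for all $i$, and since $h\circ\sigma\in\mathbf{Hom}(\mathbf{Fm}_{\mathcal{L}},\alga)$ and $\mathcal{M}$ satisfies the first-order condition $(\Gamma_{0},\psi)^{\perp}$ by the very definition of $\mathfrak{N}_{w}^{\Theta}$, we get $(h\circ\sigma)(\psi)=h(\sigma(\psi))\perp\fa$, i.e. $h(\sigma(\psi))\in F^{\mathcal{M}}_{\perp}$. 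This closes the induction and yields $\Gamma\models_{\mathfrak{N}_{w}^{\Theta}}\varphi$.

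For the converse, assume $\Gamma\not\vdash_{\nel+\Theta}\varphi$ and set $T:=\mathrm{Cn}^{\nel+\Theta}(\Gamma)$, so $\varphi\notin T$. Because $\vdash_{\nel}\ \subseteq\ \vdash_{\nel+\Theta}$, every axiom and inference rule of $\nel$ remains available, so the entire construction in the right-to-left part of Theorem~\ref{thm:compl} goes through word for word with $\mathrm{Cn}^{\nel}(\Gamma)$ replaced by $T$: the relation $\psi\equiv_{\Gamma}\chi$ iff $\psi\ra\chi,\chi\ra\psi\in T$ is a congruence on $\mathbf{Fm}_{\mathcal{L}}$, the quotient $\mathbf{Fm}_{\mathcal{L}}/{\equiv_{\Gamma}}$ is a weak $\mathcal{N}$-algebra, and the relation $\perp_{\Gamma}$ defined there satisfies clauses (a)--(h) of Definition~\ref{def:nmod}; hence $\mathcal{M}:=(\mathbf{Fm}_{\mathcal{L}}/{\equiv_{\Gamma}},\perp_{\Gamma},\{\tr,\fa\})$ is an $\mathfrak{N}_{w}$-model, and recall that $\rho/{\equiv_{\Gamma}}\perp_{\Gamma}\fa$ holds iff $\rho\in T$. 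The one genuinely new point is to check $\mathcal{M}\in\mathfrak{N}_{w}^{\Theta}$. Let $(\Gamma_{0},\psi)\in\Theta$ with $\Gamma_{0}=\{\gamma_{1}(x_{1},\dots,x_{n}),\dots,\gamma_{m}(x_{1},\dots,x_{n})\}$, take $\rho_{1},\dots,\rho_{n}\in\mathrm{Fm}_{\mathcal{L}}$, and suppose $\gamma_{i}(\rho_{1},\dots,\rho_{n})/{\equiv_{\Gamma}}\perp_{\Gamma}\fa$ for every $i$ (the interpretation of the term $\gamma_{i}$ under the assignment $x_{j}\mapsto\rho_{j}/{\equiv_{\Gamma}}$ is indeed $\gamma_{i}(\rho_{1},\dots,\rho_{n})/{\equiv_{\Gamma}}$, the quotient map being a homomorphism). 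Then $\gamma_{i}(\rho_{1},\dots,\rho_{n})\in T$ for every $i$; since $(\Gamma_{0},\psi)$ is a rule of $\nel+\Theta$ applied schematically (here through $x_{j}\mapsto\rho_{j}$) and $T$ is closed under the rules of $\nel+\Theta$, we obtain $\psi(\rho_{1},\dots,\rho_{n})\in T$, i.e. $\psi(\rho_{1},\dots,\rho_{n})/{\equiv_{\Gamma}}\perp_{\Gamma}\fa$. This is precisely the condition $(\Gamma_{0},\psi)^{\perp}$, so $\mathcal{M}\in\mathfrak{N}_{w}^{\Theta}$. Finally, the natural map $h\colon x\mapsto x/{\equiv_{\Gamma}}$ satisfies $h(\Gamma)\subseteq F^{\mathcal{M}}_{\perp}$ (as $\Gamma\subseteq T$) and $h(\varphi)\notin F^{\mathcal{M}}_{\perp}$ (as $\varphi\notin T$), so for every finite $\Gamma'\subseteq\Gamma$ we have $h(\Gamma')\subseteq F^{\mathcal{M}}_{\perp}$ while $h(\varphi)\notin F^{\mathcal{M}}_{\perp}$, witnessing $\Gamma\not\models_{\mathfrak{N}_{w}^{\Theta}}\varphi$.

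The only delicate bookkeeping --- and hence the ``main obstacle'', though it is entirely routine --- is the treatment of schematic rules under substitution in both directions: on the soundness side, reducing an application of a $\Theta$-rule to the first-order clause $(\Gamma_{0},\psi)^{\perp}$ by means of the composite homomorphism $h\circ\sigma$; on the completeness side, the dual observation that, because $\mathbf{Fm}_{\mathcal{L}}/{\equiv_{\Gamma}}$ evaluates terms by reading off substitution instances, validity of $(\Gamma_{0},\psi)^{\perp}$ in the Lindenbaum--Tarski model coincides with closure of $T$ under that very $\Theta$-rule. The hypothesis that $\Theta$ be finite (each of its rules having finitely many premises) is used only to keep every $(\Gamma_{0},\psi)^{\perp}$ a genuine first-order sentence and $\models_{\mathfrak{N}_{w}^{\Theta}}$ finitary, matching the finitary $\vdash_{\nel+\Theta}$; one also notes that $\mathfrak{N}_{w}^{\Theta}$ is always non-empty, as it contains the trivial models, so the biconditional is not vacuous on the semantic side.
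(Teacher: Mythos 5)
Your proposal is correct and is exactly the argument the paper intends: the paper gives no separate proof of Theorem~\ref{thm:compl2}, stating only that it follows ``upon making good use of the proof strategy from Theorem~\ref{thm:compl}'', and you have filled in precisely the two missing clauses --- the soundness case for a $\Theta$-rule via the composite homomorphism $h\circ\sigma$ and the condition $(\Gamma_{0},\psi)^{\perp}$, and the verification that the Lindenbaum--Tarski model lies in $\mathfrak{N}_{w}^{\Theta}$ because $\mathrm{Cn}^{\nel+\Theta}(\Gamma)$ is closed under the $\Theta$-rules. No gaps; this matches the paper's approach.
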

Now, it can be seen that $\mathfrak{N}=\mathfrak{N}_{w}^{\{\eqref{as1},\eqref{as2}\}}$. Therefore, in view of Proposition \ref{prop:odifreddi}, one has also the following theorem.

\begin{theorem}\label{thm:compl3}For any $\Gamma\cup\{\varphi\}\subseteq\mathrm{Fm}_{\mathcal{L}}$, and for any $\Theta\subseteq\mathcal{P}(\mathrm{Fm}_{\mathcal{L}})\times\mathrm{Fm}_{\mathcal{L}}$:
\begin{enumerate}
\item $\vdash_{\nl}\varphi$ iff $\models_{\mathfrak{N}_{w}}\varphi$;
\item $\vdash_{\nlas}\varphi$ iff $\models_{\mathfrak{N}}\varphi$;
\item $\vdash_{\nl+\Theta}\varphi$ iff $\models_{\mathfrak{N}_{w}^{\Theta}}\varphi$;
\item $\vdash_{\nlas+\Theta}\varphi$ iff $\models_{\mathfrak{N}^{\Theta}}\varphi$. 
\end{enumerate}
\end{theorem}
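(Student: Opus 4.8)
The plan is to obtain all four equivalences by assembling results already in hand: Theorem~\ref{thm:compl}, Theorem~\ref{thm:compl2}, Proposition~\ref{prop:odifreddi}, and the two observations recorded just above, namely $\nelas=\nel+\{\eqref{as1},\eqref{as2}\}$ and $\mathfrak{N}=\mathfrak{N}_{w}^{\{\eqref{as1},\eqref{as2}\}}$. I first note that, by the definitions preceding Remark~\ref{rem:models}, for each logic in play with corresponding model class $\mathfrak{C}$ one has $\models_{\mathfrak{C}}\varphi$ iff $\emptyset\models_{\mathfrak{C}}\varphi$, since the premise $h(\emptyset)\subseteq F_{\pp}$ is vacuously satisfied. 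Granting this, item~(1) is immediate: Theorem~\ref{thm:compl} at $\Gamma=\emptyset$ gives $\vdash_{\nel}\varphi$ iff $\models_{\mathfrak{N}_{w}}\varphi$, and composing with Proposition~\ref{prop:odifreddi}(1) yields the claim. Item~(2) is analogous: $\nelas=\nel+\{\eqref{as1},\eqref{as2}\}$ is a finite extension, so Theorem~\ref{thm:compl2} at $\Theta=\{\eqref{as1},\eqref{as2}\}$ and $\Gamma=\emptyset$ gives $\vdash_{\nelas}\varphi$ iff $\models_{\mathfrak{N}_{w}^{\{\eqref{as1},\eqref{as2}\}}}\varphi$, which is $\models_{\mathfrak{N}}\varphi$ because $\mathfrak{N}=\mathfrak{N}_{w}^{\{\eqref{as1},\eqref{as2}\}}$; now compose with Proposition~\ref{prop:odifreddi}(2).

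Item~(3) is the only place requiring work beyond bookkeeping, and even there not much. The first step is to extend Proposition~\ref{prop:odifreddi} to the presence of $\Theta$, i.e.\ to prove $\vdash_{\nl+\Theta}\varphi$ iff $\vdash_{\nel+\Theta}\varphi$; this is again a routine induction on the length of a proof, the point being that in a derivation with no hypotheses every line is a theorem, so each application of an $\nl$-rule to such lines coincides with an application of the matching strong rule, while the rules of $\Theta$ act on arbitrary formulas in both systems. The second step is to establish $\vdash_{\nel+\Theta}\varphi$ iff $\models_{\mathfrak{N}_{w}^{\Theta}}\varphi$ by re-running the proof of Theorem~\ref{thm:compl} with two additions. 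In the soundness induction, the step for a rule $(\Delta,\psi)\in\Theta$ holds by definition of $\mathfrak{N}_{w}^{\Theta}$ as the class of $\mathfrak{N}_{w}$-models satisfying each condition $(\Delta,\psi)^{\pp}$. In the completeness direction, the Lindenbaum--Tarski model built from $\mathrm{Cn}^{\nel+\Theta}(\Gamma)$ lies in $\mathfrak{N}_{w}$ exactly as before, and in fact lies in $\mathfrak{N}_{w}^{\Theta}$, since closure of $\mathrm{Cn}^{\nel+\Theta}(\Gamma)$ under each rule of $\Theta$ translates, through the definition $\psi/{\equiv}\pp_{\Gamma}\fa$ iff $\psi\in\mathrm{Cn}^{\nel+\Theta}(\Gamma)$, precisely into the first-order condition $(\Delta,\psi)^{\pp}$; taking $\Gamma=\emptyset$ gives (3). (If one prefers not to repeat the construction when $\Theta$ comprises infinitely many schemas, one may instead note that a proof of $\varphi$ invokes only finitely many instances of $\Theta$, reduce to a finite $\Theta_{0}\subseteq\Theta$, apply Theorem~\ref{thm:compl2}, and transfer back using $\mathfrak{N}_{w}^{\Theta}\subseteq\mathfrak{N}_{w}^{\Theta_{0}}$ together with a small compactness remark; the direct re-run avoids this and is cleaner.)

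Finally, item~(4) reduces to item~(3): one has $\nlas+\Theta=\nl+(\Theta\cup\{\eqref{as1},\eqref{as2}\})$ and, because $\mathfrak{N}=\mathfrak{N}_{w}^{\{\eqref{as1},\eqref{as2}\}}$, also $\mathfrak{N}^{\Theta}=\mathfrak{N}_{w}^{\Theta\cup\{\eqref{as1},\eqref{as2}\}}$, so (4) is nothing but (3) applied with $\Theta$ replaced by $\Theta\cup\{\eqref{as1},\eqref{as2}\}$, modulo the $\nl$/$\nel$ identification of the extended Proposition~\ref{prop:odifreddi}. The only real obstacle in the whole proof is organizational: checking that neither the $\nl$-to-$\nel$ passage nor the adjunction of $\Theta$ creates an inductive case not already dispatched in Theorems~\ref{thm:compl} and~\ref{thm:compl2}.
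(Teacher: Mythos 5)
Your proposal is correct and follows essentially the same route the paper intends: the paper derives Theorem~\ref{thm:compl3} directly from Theorem~\ref{thm:compl2}, Proposition~\ref{prop:odifreddi}, and the identity $\mathfrak{N}=\mathfrak{N}_{w}^{\{\eqref{as1},\eqref{as2}\}}$, exactly as you do. You merely make explicit two points the paper leaves tacit --- the extension of Proposition~\ref{prop:odifreddi} to $\nl+\Theta$ versus $\nel+\Theta$, and the reduction to finite $\Theta_{0}\subseteq\Theta$ when $\Theta$ is infinite --- both of which are handled correctly.
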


It can be shown that, actually, $\mathfrak{N}_{w}$-models are into bijective  correspondence with reduced matrix models of $\nel$. 
Indeed, let $\alga=(A,\otimes,\circ,{}^{*})$ be an $\mathcal{L}$-algebra. 
We can trivially define a logical $\nel$-filter over $\alga$ to be a set $\emptyset\ne F\subseteq A$ such that, for any $h\in\mathbf{Hom}(\mathbf{Fm}_{\mathcal{L}},\alga)$, any axiom $\varphi$ of $\nel$, $\psi,\chi\in\mathrm{Fm}_{\mathcal{L}}$, 
and any term $\delta(x,\vec{z})\in\mathrm{Fm}_{\mathcal{L}}$, one has\footnote{See \cite[pp. 6]{Font16} for details about the notation.}
\begin{enumerate}
\item $h(\varphi)\in F$;
\item $h(\psi), h(\psi\ra\chi)\in F$ implies $h(\chi)\in F$;
\item $h(\varphi),h(\psi)\in F$ iff $h(\varphi\otimes\psi)\in F$;
\item $h(\psi\ra\chi),h(\chi\ra\psi)\in F$ implies $\delta^{\alga}(h(\psi),\vec{c})\ra\delta^{\alga}(h(\chi),\vec{c})\in F$, for any $\vec{c}\in \vec{A}$. 
\end{enumerate}
The class of  matrix models of $\nel$ coincides with the class of pairs $\langle\alga,F\rangle$, where $\alga=(A,\otimes,\circ,{}^{*})$ is an $\mathcal{L}$-algebra  and $F$ is a $\nel$-filter over $\alga$. Of course, $\nel$ is complete w.r.t. the class of all its matrix models, as this holds for any logic (cf. \cite[Theorem 4.16]{Font16}). Let $\langle\alga,F\rangle$ be a matrix model of $\nel$ and define $\equiv\ \subseteq A^{2}$ upon setting, for any $a,b\in A$:
\[a\equiv b\text{ iff }a\ra b,b\ra a\in F.\] Putting in good use clauses (1)-(4) defining logical filters of $\nel$, it is easily seen that $\equiv$ is actually a congruence relation. Moreover, items (1)-(4) ensure that, for any $a,b\in A$, $a\equiv b$ if and only if, for any $\delta(x,\vec{z})\in\mathrm{Fm}_{\mathcal{L}}$, and any $\vec{c}\in \vec{A}$, it holds that $$\delta^{\alga}(a,\vec{c})\in F\text{ iff }\delta^{\alga}(b,\vec{c})\in F.$$ Consequently, by \cite[Theorem 4.23]{Font16}, $\equiv\ =\ \Omega^{\alga}(F)$.\\
Given a matrix $\langle\alga,G\rangle$, where $\alga$ is an $\mathcal{L}$-algebra, and $G\subseteq A$, we define a relation $\pp_{G}\ \subseteq \overline{A}\times\overline{A}$, where $\overline{A}$ has to be meant as at p. \pageref{pippo}, upon setting
\[\pp_{F}:=\{(a,b)\in A^{2}:a\ra b^{*}\in G\}\cup\{(a,\fa):a\in G\}\cup\{(a,\tr):a^{*}\in G\}.\] Moreover, for any structure $(\alga,\pp,\{\tr,\fa\})$, where $\alga$ is an $\mathcal{L}$-algebra and $\pp\ \subseteq\overline{A}\times\overline{A}$, we set $F_{\pp}:=\{a\in A: a\pp\fa\}$.
\begin{proposition}\label{equivmatrixnelsonmo}For any pair $\langle\alga,G\rangle$, and any structure $(\alga,\pp,\{\tr,\fa\})$ such that $\alga$ is an $\mathcal{L}$-algebra, $\pp\ \subseteq \overline{A}\times\overline{A}$ and $G\subseteq A$, one has:
\begin{enumerate}
\item $\langle\alga, G\rangle$ is a reduced model of $\nel$ if and only if $(\alga,\pp_{G},\{\tr,\fa\})\in\mathfrak{N}_{w}$;
\item $(\alga,\pp,\{\tr,\fa\})\in\mathfrak{N}_{w}$ if and only if $\langle\alga,F_{\pp}\rangle$ is a reduced model of $\nel$ and $\pp=\pp_{F_{\pp}}$.
\end{enumerate}
Consequently, $\mathsf{Alg}^{*}\nel$ coincides with the class of algebraic reducts of $\mathfrak{N}$-models. Moreover, for any reduced matrix $\langle\alga,G\rangle$ of $\nel$ and any $\mathfrak{N}_{w}$-model $(\alga,\pp,\{\tr,\fa\})$, one has $$\langle\alga,G\rangle=\langle\alga,F_{\pp_{G}}\rangle\text{ and }(\alga,\pp,\{\tr,\fa\})=(\alga,\pp_{F_{\pp}},\{\tr,\fa\}).$$
\end{proposition}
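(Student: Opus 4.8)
The plan is to deduce everything from part~(1), since part~(2) and the ``consequently'' assertions follow from~(1) together with the two elementary identities $F_{\pp_{G}}=G$ and $\pp_{F_{\pp}}=\pp$, the latter valid whenever $(\alga,\pp,\{\tr,\fa\})\in\mathfrak{N}_{w}$. The first is immediate: by the definition of $\pp_{G}$ one has $a\pp_{G}\fa$ iff $a\in G$, so $F_{\pp_{G}}=\{a\in A:a\pp_{G}\fa\}=G$. For the second, check the three blocks of $\pp_{F_{\pp}}$ against $\pp$: the $\fa$-block holds by the definition of $F_{\pp}$; the $\tr$-block because $a^{*}\pp\fa$ iff $a\pp\tr$ by Definition~\ref{def:nmod}(d); and, recalling $\varphi\ra\psi:=(\varphi\circ\psi^{*})^{*}$ and using $b^{**}=b$ so that $a\ra b^{*}=(a\circ b)^{*}$, the $A\times A$-block because $a\pp_{F_{\pp}}b$ iff $(a\circ b)^{*}\pp\fa$ iff $a\circ b\pp\tr$ iff $a\pp b$ by Definition~\ref{def:nmod}(c),(d). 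Granting~(1), applying it with $G:=F_{\pp}$ gives~(2); the equalities $\langle\alga,G\rangle=\langle\alga,F_{\pp_{G}}\rangle$ and $(\alga,\pp,\{\tr,\fa\})=(\alga,\pp_{F_{\pp}},\{\tr,\fa\})$ are precisely the two identities just proved; and, $\mathsf{Alg}^{*}\nel$ being the class of algebraic reducts of reduced models of $\nel$, the stated description of $\mathsf{Alg}^{*}\nel$ drops out as well.

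It thus remains to prove~(1) in both directions. For ``$\langle\alga,G\rangle$ a reduced model of $\nel$ $\Rightarrow$ $(\alga,\pp_{G},\{\tr,\fa\})\in\mathfrak{N}_{w}$'', one first shows $\alga$ is a weak $\mathcal{N}$-algebra: this is the computation from the proof of Theorem~\ref{thm:compl} that the Lindenbaum--Tarski algebra is a weak $\mathcal{N}$-algebra, carried out over an arbitrary reduced matrix --- involutivity of ${}^{*}$ from (A3) and $\Omega^{\alga}(G)=\Delta$, commutativity of $\otimes$ from \eqref{a5bis}, of $\circ$ from (A2), and the exchange identity~\eqref{ex} from (A6) --- using the $\nel$-filter clauses and reducedness of $G$ wherever that proof used membership in $\mathrm{Cn}^{\nel}(\Gamma)$. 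Then, unwinding $\pp_{G}$ and using $\varphi^{**}=\varphi$: conditions~(a), (f), (h) of Definition~\ref{def:nmod} assert exactly that the pertinent instances of (A1), (A4), (A7) lie in $G$, hence hold by the first filter clause; (c) and (d) become the trivialities ``$(a\circ b)^{*}\in G$ iff $(a\circ b)^{*}\in G$'' and ``$x^{*}\in G$ iff $x^{*}\in G$'' once $a\ra b^{*}$ is rewritten as $(a\circ b)^{*}$; (e) is the third filter clause; (g) is the second (Modus Ponens); and (b) says exactly that $a\ra b,b\ra a\in G$ forces $a=b$, i.e.\ that $\langle\alga,G\rangle$ is reduced.

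For the converse, assume $(\alga,\pp_{G},\{\tr,\fa\})\in\mathfrak{N}_{w}$; then $\alga$ is already a weak $\mathcal{N}$-algebra and $G\ne\emptyset$ by~(a). It suffices to show $G$ is a $\nel$-filter with $\Omega^{\alga}(G)=\Delta$. First filter clause: (A1) holds by~(a), (A4) by~(f), (A7) by~(h), while (A2), (A3), \eqref{a5bis}, (A6) reduce to instances of (A1), since in a weak $\mathcal{N}$-algebra commutativity of $\circ$ and of $\otimes$, involutivity, and~\eqref{ex} collapse the relevant terms to the form $t\ra t$ (e.g.\ $(\varphi\otimes\psi)\ra\chi=(\varphi\otimes\chi^{*})\ra\psi^{*}$ by~\eqref{ex}). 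Second filter clause: if $a\ra b,a\in G$ then $a\pp_{G}b^{*}$ and $a\pp_{G}\fa$, whence $b^{*}\pp_{G}\tr$ by~(g), and then $b\in G$ by~(d). Third filter clause: this is condition~(e). Fourth filter clause: if $a\ra b,b\ra a\in G$ then $a\pp_{G}b^{*}$ and $b\pp_{G}a^{*}$, so $a=b$ by~(b), whence $\delta^{\alga}(a,\vec{c})\ra\delta^{\alga}(b,\vec{c})=\delta^{\alga}(a,\vec{c})\ra\delta^{\alga}(a,\vec{c})\in G$ by (A1). The same use of~(b) gives $\Omega^{\alga}(G)=\Delta$, so $\langle\alga,G\rangle$ is a reduced model of $\nel$.

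The difficulty is entirely bookkeeping rather than conceptual: one must keep the abbreviation $\varphi\ra\psi:=(\varphi\circ\psi^{*})^{*}$ and the involution law constantly in play so that the clauses of Definition~\ref{def:nmod} line up exactly with the $\nel$-filter clauses (recognising, e.g., that $a\ra b^{*}=(a\circ b)^{*}$, or that (A6) and~\eqref{ex} encode the same equation), and one must take care that in each direction the weak $\mathcal{N}$-algebra structure is genuinely available --- automatic in the model-first presentation, but to be recovered from the filter clauses exactly as in Theorem~\ref{thm:compl} in the matrix-first one.
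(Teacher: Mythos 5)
Your proof is correct and follows essentially the same route as the paper's: both directions of (1) are obtained by lining up the clauses of Definition~\ref{def:nmod} with the $\nel$-filter clauses under the translations $a\ra b^{*}=(a\circ b)^{*}$ and $a^{**}=a$, reducedness is identified with condition (b), and (2) together with the ``moreover'' claims is reduced to (1) via the identities $F_{\pp_{G}}=G$ and $\pp_{F_{\pp}}=\pp$. The one place you genuinely diverge is the converse direction of (1): the paper disposes of the claim that $F_{\pp_{G}}$ is a logical filter by simply invoking soundness (Theorem~\ref{thm:compl}), whereas you verify the four filter clauses directly from conditions (a)--(h), checking in particular that (A2), (A3), (A5$^{*}$) and (A6) collapse to instances of $t\ra t$ via the weak $\mathcal{N}$-algebra identities. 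Your version is longer but self-contained and does not presuppose the completeness theorem's soundness half; it also fills in the clauses the paper leaves to the reader in the forward direction (notably (b), (e), (f), (g), (h)) and explicitly notes $G\ne\emptyset$, which the paper passes over. The only small imprecision is that the axiom of $\nel$ is (A5), i.e.\ $(\varphi\otimes\psi)\Leftrightarrow(\psi\otimes\varphi)$, rather than (A5$^{*}$); its membership in $G$ still follows from your argument by one extra application of clause (e), since it evaluates to $(t\ra t)\otimes(t\ra t)$.
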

\begin{proof}
(1). As regards the left-to-right direction, we first show that $\alga$ is a weak $\mathcal{N}$-algebra. That $(A,\otimes)$ is a commutative groupoid is ensured by noticing that, since $G$ is a logical filter and $\Omega^{\alga}(G)=\Delta$, for any $a,b\in A$, $a\otimes b\ra b\otimes a\in G$ and $b\otimes a\ra a\otimes b\in G$, so $a\otimes b\equiv b\otimes a$ and $a\otimes b = b\otimes a$. The commutativity of $\circ$, the involutivity of ${}^{*}$, as well as \eqref{ex}, can be proven exactly in the same way. To see that $\pp_{G}$ satisfies conditions from Definition \ref{def:nmod}, we confine ourselves to show (a) and (c) leaving the remaining clauses to the reader.  As regards (a), note that $a\ra a\in G$. This means that $a\ra a^{**}\in G$ and so $a\pp_{G}a^{*}$. Concerning (c), note that $a\pp_{G}b$ iff $a\ra b^{*}\in G$ iff $(a\circ b)^{*}\in G$ iff $a\circ b\pp_{G}\tr$.\\
Conversely, first note that $G=F_{\pp_{G}}$ as $x\in G$ iff $x\pp_{G}\fa$ iff $x\in F_{\pp_{G}}$. Moreover, $F_{\pp_{G}}$ is a logical filter by Theorem \ref{thm:compl} (soundness). The fact that $\Omega^{\alga}(G)=\Delta$ is ensured by Definition \ref{def:nmod}(b). \\
To prove (2), observe that if $(\alga,\pp,\{\tr,\fa\})\in\mathfrak{N}_{w}$, then $x\pp_{F_{\pp}}y$ iff $x\ra y^{*}\in F_{\pp}$ iff $x\ra y^{*}\pp\fa$ iff $x\circ y\pp\tr$ iff $x\pp y$. Moreover, $x\pp_{F_{\pp}}\fa$ iff $x\in F_{\pp}$ iff $x\pp\fa$ and, similarly, one has $x\pp_{F_{\pp}}\tr$ iff $x\pp\tr$. So $(\alga,\pp,\{\tr,\fa\})=(\alga,\pp_{F_{\pp}},\{\tr,\fa\})$. Consequently, we conclude that $\langle\alga,F_{\pp}\rangle$ is a reduced model by item (1). Conversely,  if $\langle\alga,F_{\pp}\rangle$ is a reduced model of $\mathsf{NeL}$, then (again by item (1)), $(\alga,\pp_{F_{\pp}},\{\tr,\fa\})\in\mathfrak{N}_{w}$ and the desired conclusion follows by $\pp_{F_{\pp}}=\pp$.\\
The moreover part is clear.
\end{proof}
\noindent In the light of Proposition \ref{equivmatrixnelsonmo}, we conclude that the map $\langle\alga,F\rangle\mapsto(\alga,\pp_{F},\{\tr,\fa\})$, where $\langle\alga,F\rangle$ is a reduced model of $\mathsf{NeL}$ is a bijection between the class of reduced models of $\nel$ and $\mathfrak{N}_{w}$. 

Due to \cite[Theorem 6.7]{Font16} and the presence of (A1) and \Mps, it follows that $\mathsf{NeL}$ is protoalgebraic (see \cite[Definition 6.1]{Font16}). This means, among other things, that $\mathsf{NeL}$ satisfies the \emph{Parameterised Local Deduction-Detachment Theorem} (\cite[Theorem 6.22]{Font16}). Therefore, we believe that the following problem, whose analysis is postponed to a future work, is worth of investigation.
\begin{question}Characterize a family of Deduction-Detachment sets (see \cite[Definition 6.21]{Font16}) for $\nel$.
\end{question}
Moreover, it can be seen that the set $\Delta(x,y):=\{x\ra y,y\ra x\}$ is a set of \emph{congruence formulas} for $\nel$ and $\nelas$. This is a consequence of \cite[Theorem 6.60]{Font16} and the proof of Theorem \ref{thm:compl} (completeness), since e.g. it can be seen that conditions
\begin{enumerate}
\item[(R)] $\vdash_{\nel}\Delta(x,x)$;
\item[(MoP)] $x,\Delta(x,y)\vdash_{\nel}y$;
\item[(Re)] $\Delta(x_{1},y_{1}), \Delta(x_{2},y_{2})\vdash_{\nel}\Delta(x_{1}\star x_{2}, y_{1}\star y_{2})$, for each $\star\in\{\circ,\otimes\}$, and $\Delta(x_{1},y_{1})\vdash_{\nel}\Delta(x_{1}^{*},y_{1}^{*})$.
\end{enumerate}
Therefore, $\nel$ and its extensions are \emph{equivalential} (see \cite[Definition 6.63]{Font16}).  
However, it naturally raises the question whether they are also \emph{algebraizable}.  
\begin{question}\label{prob:algebraiz} Prove or disprove that $\nel$ is algebraizable.  
\end{question}

\subsection{Remarks on (NSY) and hyperconnexivity}
Logics encountered so far have Aristotle's and Boethius' theses as theorems, namely for any $\mathsf{L}\in\{\nl,\nlas,\nel,\nelas\}$, and so for any finite extension thereof, one has $\vdash_{\mathsf{L}}(\varphi\ra\varphi^{*})^{*}$, $\vdash_{\mathsf{L}}(\varphi^{*}\ra\varphi)^{*}$ (Aristotle's Theses), and both $\vdash_{\mathsf{L}}(\varphi\ra\psi)\ra(\varphi\ra\psi^{*})^{*}$ and $\vdash_{\mathsf{L}}(\varphi\ra\psi^{*})\ra(\varphi\ra\psi)^{*}$ (Boethius' Theses). These are easy consequences of the definition of $\ra$, axioms (A3) and its converse (which can be derived as in \cite[pp. 419--420]{MarPa}), (A4), $\mathrm{(MP)}^{s}$ (or (MP)) and $\mathrm{(Eq)}_{1}^{s}$ (or $\mathrm{(Eq)}_{1}$).\\ In the light of these facts, it raises the question whether Nelson's logics outlined so far are also \emph{hyper-connexive}.\\ A logic $\mathsf{L}$ over a language containing a binary connective $\ra$ for implication, and a unary connective ${}^{*}$ for negation is said to be hyper-connexive provided that it is connexive and, moreover, the following hold, for any pair of formulas $\varphi,\psi$:
\[\mathrm{(HC1)}\ \vdash_{\mathsf{L}}(\varphi\ra\psi)^{*}\ra(\varphi\ra\psi^{*})\text{ and }\mathrm{(HC2)}\ \vdash_{\mathsf{L}}(\varphi\ra\psi^{*})^{*}\ra(\varphi\ra\psi).\]
There are well known connexive logics satisfying hyper-connexive theses, think e.g. to H. Wansing's connexive logic $\mathsf{C}$ (\cite{wansingstanford,Wans2}). However, some doubts concerning their plausibility have been raised, in particular when it comes to formalize natural language conditionals, see \cite[pp. 446--447]{mccall2012}.\\ 
Actually, if $\ra$ is meant according to Nelson's perspective on entailment, then the failure of hyper-connexive principles is arguably a natural \emph{desideratum}. Indeed, the validity of (HC1) or (equivalently)  of (HC2), would entail, for any $x,y\in Var$, the validity of \[(x \circ y)\ra(x\ra y),\] namely that if $x$ and $y$ are compatible, then $x\ra y$ holds. This is of course unacceptable as, for example, the proposition ``$4$ is even'' ($p$) and ``$3$ is prime'' ($q$) are compatible but ``If $4$ is even, then $3$ is prime'' need not hold, since $q$ is arguably not obtainable from $p$ by ``logical analysis'', at least in Nelson's sense (cf. \cite[p. 447]{Nelson1}). {A further motivation for a Nelsonian rejection of hyperconnexivity is provided by the following example. Let us consider the statements ($p$)``John has an even number of children'', ``John has more boys than girls''($q$). Since $p$ is compatible with \emph{both} $q$ and $q^{*}$, one would have that both $p\ra q$ and $p\ra q^{*}$ hold true. In other words, hyperconnexivity, together with axioms and inference rules Nelson's systems (think to (MP) and (A4)),  yields inconsistencies\footnote{We thank an anonymous referee for pointing this fact out to us.}. This fact is mirrored by the next proposition. It shows that any non-trivial $\mathfrak{N}_{w}$-model provides a counterexample to hyperconnexivity.} 

\begin{proposition}\label{prop:df}Let $\mathcal{M}=(\alga,\pp,\{\tr,\fa\})\in\mathfrak{N}_{w}$. Then, $\mathcal{M}\models (x\circ y)\ra x\ra y$ if and only if $\alga$ is trivial.
\end{proposition}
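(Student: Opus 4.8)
The plan is to translate the semantic hypothesis into an internal statement about $\mathcal{M}$ and then squeeze triviality out of it. Unfolding $\ra$ one has $(x\circ y)\ra(x\ra y)=((x\circ y)\circ(x\circ y^{*}))^{*}$, so $\mathcal{M}\models(x\circ y)\ra x\ra y$ asserts that $((a\circ b)\circ(a\circ b^{*}))^{*}\pp\fa$ for all $a,b\in A$; by Definition \ref{def:nmod}(d) and then (c) this is equivalent to
\[(a\circ b)\pp(a\circ b^{*})\quad\text{for all }a,b\in A.\tag{$\star$}\]
The implication ``$\alga$ trivial $\Rightarrow\mathcal{M}\models(x\circ y)\ra x\ra y$'' is immediate: if $A=\{e\}$ then $e^{*}=e$ and, by Definition \ref{def:nmod}(a),(c),(d), $e\pp\fa$, so $F_{\pp}=A$ and $\mathcal{M}\models\varphi$ for every $\varphi$.

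For the converse I would first upgrade $(\star)$ to the identity $(a\circ b)^{*}=a\circ b^{*}$ (equivalently $x\ra y=x\circ y$): taking $x:=a\circ b$ and $y:=(a\circ b^{*})^{*}$ in Definition \ref{def:nmod}(b), the premise $x\pp y^{*}$ is exactly $(\star)$ and the premise $y\pp x^{*}$ is Definition \ref{def:nmod}(f), so $x=y$. Feeding this identity back through Definition \ref{def:nmod}(c),(d) gives $u\pp v\iff u\circ v^{*}\in F_{\pp}$ for $u,v\in A$; hence, using commutativity of $\circ$, Definition \ref{def:nmod}(b) becomes ``$a\circ b\in F_{\pp}\Rightarrow a=b$'', while Definition \ref{def:nmod}(a) yields ``$a\circ a\in F_{\pp}$ for all $a$''. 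Together:
\[a\circ b\in F_{\pp}\iff a=b.\tag{$\dagger$}\]

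The endgame uses the exchange law $(x\otimes y)\circ z\approx(x\otimes z)\circ y$ of weak $\mathcal{N}$-algebras together with $(\dagger)$. By $(\dagger)$, $(x\otimes y)\circ z\in F_{\pp}$ iff $x\otimes y=z$; rewriting the left-hand side by exchange and applying $(\dagger)$ again, the very same membership holds iff $x\otimes z=y$. Hence $x\otimes y=z\iff x\otimes z=y$ for all $x,y,z$, and instantiating $z:=x\otimes y$ gives $x\otimes(x\otimes y)=y$, i.e.\ every translation $y\mapsto x\otimes y$ is an involution (in particular a bijection) of $A$. Now assume, for contradiction, that some $c\in A\setminus F_{\pp}$; since $F_{\pp}\ne\emptyset$ (it contains $a\circ a$ for any $a$, by $(\dagger)$) pick $d\in F_{\pp}$ and write $d=c\otimes(c\otimes d)$. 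By Definition \ref{def:nmod}(e), $d\in F_{\pp}$ forces $c\in F_{\pp}$, a contradiction. Therefore $F_{\pp}=A$, and then $(\dagger)$ gives $a=b$ for all $a,b\in A$, i.e.\ $\alga$ is trivial.

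The genuinely delicate point is not the bookkeeping with Definition \ref{def:nmod} but recognizing that $(\star)$ collapses entailment onto compatibility, and that from there triviality is forced by the interplay between the relational axioms (a), (b), (e) and the purely algebraic exchange law (ex): a weak $\mathcal{N}$-algebra satisfying $(x\circ y)^{*}=x\circ y^{*}$ need not be trivial, so the relational structure must be used essentially. I expect the passage from $(\dagger)$ and (ex) to the quasigroup-type identity $x\otimes(x\otimes y)=y$, and then from (e) to $F_{\pp}=A$, to be the conceptual core; everything else is routine manipulation of the model conditions.
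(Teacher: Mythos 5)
Your proof is correct: the reduction of the hypothesis to $(\star)$, the upgrade to the identity $x\ra y=x\circ y$ via clauses (b) and (f) of Definition \ref{def:nmod}, the reformulation $(\dagger)$, the identity $x\otimes(x\otimes y)=y$ extracted from the exchange law, and the final appeal to clause (e) all check out. The paper's proof rests on the same two pillars --- the collapse $x\ra y=x\circ y$ (which it asserts without the explicit argument you give through (b) and (f)) and the interplay of \eqref{ex} with clause (e) --- but organizes the endgame differently: instead of first proving the global facts $F_{\pp}=A$ and ``$a\circ b\in F_{\pp}$ iff $a=b$'', it fixes $x,y$, starts from $((x\ra x)\otimes(x\ra y))\ra((x\ra x)\otimes(x\ra y))\pp\fa$ (soundness of (A1)), rewrites this term via the collapse and exchange as $(x\ra x)\ra\bigl((x\ra y)\otimes((x\ra x)\otimes(x\ra y))\bigr)$, detaches with $x\ra x\pp\fa$, extracts $x\ra y\pp\fa$ by clause (e), and then gets $y\ra x=y\circ x=x\circ y=x\ra y\pp\fa$ so that clause (b) yields $x=y$ at once. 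Your route is slightly longer but buys a cleaner structural picture: $(\dagger)$ and the involutive translations $y\mapsto x\otimes y$ isolate exactly where the relational axioms enter, and your closing observation that the purely equational consequence $(x\circ y)^{*}\approx x\circ y^{*}$ does not by itself force triviality is correct (e.g.\ the two-element algebra with $x\circ y=x\otimes y=x+y$ and $x^{*}=x+1$ over $\mathbb{Z}_{2}$ is a nontrivial weak $\mathcal{N}$-algebra satisfying it), which the paper's pointwise computation leaves implicit.
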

\begin{proof}
Note that $\mathcal{M}\models(x\circ y)\ra(x\ra y)$ if and only if $\alga\models x\circ y\approx x\ra y$. Now, if this is the case, one has 
\begin{align*}
\fa &\pp((x\ra x)\otimes(x\ra y))\ra((x\ra x)\otimes(x\ra y))\\
&=((x\ra x)\otimes(x\ra y))\circ((x\ra x)\otimes(x\ra y))\\
&=((x\ra y)\otimes((x\ra x)\otimes(x\ra y)))\circ(x\ra x)\\
&=(x\ra x)\ra((x\ra y)\otimes((x\ra x)\otimes(x\ra y))).  
\end{align*}
Since $x\ra x\pp\fa$, one has also $(x\ra y)\otimes((x\ra x)\otimes(x\ra y))\pp\fa$ and so $(x\ra y)=x\circ y=y\circ x=y\ra x\pp\fa$. We conclude that $x=y$.
\end{proof}
\noindent As a consequence, any $\mathfrak{N}_{w}$-model yields a counterexample to the symmetry of $\ra$.

\begin{proposition}\label{prop:df1}Let $\mathcal{M}=(\alga,\pp,\{\tr,\fa\})\in\mathfrak{N}_{w}$. Then, $\mathcal{M}\models (x\ra y)\ra(y\ra x)$ if and only if $\alga$ is trivial.
\end{proposition}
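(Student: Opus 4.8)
The plan is to reduce the statement to Proposition~\ref{prop:df}. The right-to-left direction is trivial, since a trivial $\alga$ satisfies every formula in any $\mathfrak{N}_{w}$-model, so assume $\mathcal{M}\models(x\ra y)\ra(y\ra x)$. Unwinding the definition of $\ra$ and using Definition \ref{def:nmod}(d) (to strip the outer ${}^{*}$) and Definition \ref{def:nmod}(c), this condition is equivalent to $h(x\ra y)\pp h(y\ra x)^{*}$ for every $h\in\mathbf{Hom}(\mathbf{Fm}_{\mathcal{L}},\alga)$. Applying this to a homomorphism sending $x,y$ to arbitrary $a,b\in A$, and then to one sending $x,y$ to $b,a$, we obtain both $a\ra b\pp(b\ra a)^{*}$ and $b\ra a\pp(a\ra b)^{*}$, whence $a\ra b=b\ra a$ by Definition \ref{def:nmod}(b). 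Thus $\alga\models x\ra y\approx y\ra x$; spelling out $\ra$ and using that ${}^{*}$ is a bijection gives $\alga\models x\circ y^{*}\approx y\circ x^{*}$, and the substitution $y\mapsto y^{*}$ together with commutativity of $\circ$ yields $\alga\models x\circ y\approx x^{*}\circ y^{*}$.

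The crucial point is that this last identity makes the restriction of $\pp$ to $A\times A$ invariant under ${}^{*}$: for $u,v\in A$, Definition \ref{def:nmod}(c) gives $u\pp v$ iff $u\circ v\pp\tr$ iff $u^{*}\circ v^{*}\pp\tr$ iff $u^{*}\pp v^{*}$. Feeding this into Definition \ref{def:nmod}(f), which says $(x\circ y^{*})^{*}\pp(x\circ y)^{*}$, turns it into $x\circ y^{*}\pp x\circ y$, hence (by commutativity of $\circ$ and the consequent symmetry of $\pp$ on $A\times A$) into $x\circ y\pp x\circ y^{*}$ for all $x,y\in A$. Since $(x\ra y)^{*}=x\circ y^{*}$, reading this back through Definition \ref{def:nmod}(c),(d) says exactly that $h((x\circ y)\ra(x\ra y))\pp\fa$ for every $h$, i.e. $\mathcal{M}\models(x\circ y)\ra(x\ra y)$. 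Proposition~\ref{prop:df} then forces $\alga$ to be trivial, which is what we want.

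The only delicate point is the first step: one must read off the identity $\alga\models x\ra y\approx y\ra x$ carefully from the semantic clause, exploiting that the universal quantification over homomorphisms both allows swapping the roles of $x$ and $y$ and lets $a,b$ range over all of $A$. After that, the ${}^{*}$-invariance of $\pp$ does the real work and the reduction to Proposition~\ref{prop:df} is routine.
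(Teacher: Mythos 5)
Your proof is correct and follows essentially the same route as the paper's: both reduce the claim to Proposition~\ref{prop:df} by showing that $\mathcal{M}\models(x\ra y)\ra(y\ra x)$ entails $\mathcal{M}\models(x\circ y)\ra(x\ra y)$. The paper states this entailment without justification, whereas you supply the missing details (deriving $x\circ y\approx x^{*}\circ y^{*}$ and the resulting ${}^{*}$-invariance of $\pp$), all of which check out.
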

\begin{proof}
The statement follows by Proposition \ref{prop:df} upon noticing that $\mathcal{M}\models (x\ra y)\ra(y\ra x)$ entails that $\mathcal{M}\models (x\circ y)\ra(x\ra y)$. 
\end{proof}
\noindent Therefore, $\ra$, as axiomatized by $\nl,\nlas,\nel$ or $\nelas$ can be understood as a genuine full-fledged connexive implication. Even more, these logics can be somewhat regarded as \emph{strongly} non-symmetric, in the sense that each of their non-trivial models provide a counterexample to the symmetry of $\ra$. As a consequence, one has that there is no non-trivial superlogic of $\nel$ yielding a symmetric implication and which is complete in the sense of Theorem \ref{thm:compl} w.r.t. a subclass of $\mathfrak{N}_{w}$. This property is not shared e.g. by Connexive Heyting Logic $\mathsf{CHL}$ introduced in \cite{FaLePa}, and which turns out to be deductively equivalent to Intuitionistic Logic $\mathsf{IL}$. Indeed, $\mathsf{CHL}$ admits a largest non trivial superlogic $\mathsf{CBL}$ that is deductively equivalent to Classical Logic, and which yields a symmetric implication, cf. \cite[p. 113]{FaLePa}.

\section{Some extensions of Nelsonian logics}\label{sec:variantsNL}
\subsection{On strong connexivity  and the irreflexivity of $\pp$}\label{sec:strongsupconn} 
As we have recalled in the Introduction, one of the cornerstones of Nelson's perspective about logic is that consistency/compatibility is always reflexive, while inconsistency/incompatibility is always \emph{irreflexive}. Indeed, once meaning is taken into account, even contradictions are at least compatible with themselves.  However, Example \ref{ex:1} shows that axioms and inference rules of $\nel$ are not strong enough to ensure that, on the semantical side, $\mathfrak{N}_{w}$-models are endowed with an irreflexive incompatibility relation. 
 A further consequence of this fact is that there are $\mathfrak{N}_{w}$-models satisfying the formula $x\ra x^{*}$. So, $\nel$ is not \emph{strongly connexive} in the sense of A. Kapsner (see e.g. \cite{Kap1,Wans1}). Indeed, strong connexivity can be rendered by the following conditions (cf. \cite[p. 3]{Kap1}):
\begin{itemize}
\item[](Unsat1) In no (non-trivial) model, $\varphi\ra\varphi^{*}$ is satisfiable (for any $\varphi$);
\item[](Unsat2) In no (non-trivial) model $\varphi\ra\psi$ and $\varphi\ra\psi^{*}$ are satisfiable (for any $\varphi$ and $\psi$).
\end{itemize}
We observe that, in the present framework, (Unsat1) is equivalent to the irreflexivity of $\pp$. 
\begin{remark}Let $\nel^{\Theta}=\langle\mathbf{Fm}_{\mathcal{L}},\vdash_{\nel^{\Theta}}\rangle$ be a finite extension of $\nel$. Then $\nel^{\Theta}$ satisfies (Unsat1) if and only if, for any non-trivial $\mathcal{M}=(\alga,\pp,\{\tr,\fa\})\in\mathfrak{N}_{w}^{\Theta}$, $\pp$ is irreflexive. Indeed, $\nel^{\Theta}$ satisfies (Unsat1) if and only if, for any non-trivial $\mathcal{M}=(\alga,\pp,\{\tr,\fa\})\in\mathfrak{N}_{w}^{\Theta}$ and, for any $a\in A$, it never holds that $(a\circ a)^{*}\pp\fa$, i.e. that $a\pp a$.
\end{remark}
However, we have also something more. Indeed, we will see that the irreflexivity of $\pp$ has, as a syntactical counterpart, a weak form of \emph{superconnexivity} first introduced by A. Kapsner in \cite{Kap1}. To explain and motivate this concept, we refer to his words:
\begin{quote}
Now, one might wonder whether the criterion of strong connexivity can be expressed
in some manner in the object language itself, given that ARISTOTLE and BOETHIUS are
not up to this task.\\
A look at an analogous problem might bring us to some interesting ideas here. Classical logic and most non-classical logics validate the principle of explosion, $(A\land\neg A)\to B$. Let us remind ourselves of a philosophical argument that is sometimes given in favor of the validity of $(A\land\neg A)\to B$. This validity, it is claimed, is the most we can do to express, in the object language itself, the thought that a contradiction is unsatisfiable.
In analogy to this use of explosion to express the unsatisfiability of any contradiction, we might try to ask that $(A\to\neg A)\to B$ should be valid, in order to express in the object language that $(A\to\neg A)$ is unsatisfiable (and similarly for the rest of the connexive theses).
Call a logic that validates all of these schemata and satisfies all the requirements for strong connexivity superconnexive. \cite[pp. 3-4]{Kap1}
\end{quote}
Unfortunately, it turns out that, even under a \emph{modicum} of assumptions, like e.g. substitutivity and Modus Ponens (which are sound inference rules, in our setting), a superconnexive logic is \emph{trivial}. Nevertheless, if one relaxes the superconnexive axiom by substituting it with the weaker \emph{inference rule}
\[\varphi\ra\varphi^{*}\vdash\psi,\tag{I}\label{i}\]
then one may still hope of encoding  strong connexivity by means of superconnexive principles. And actually, in the framework we are dealing with, this is the case. 

To characterize the class of $\mathfrak{N}_{w}$-models whose non-trivial members are all and only those endowed with an irreflexive incompatibility relation it is sufficient to consider the class $\mathfrak{N}_{w}^{\mathrm{I}}$ of $\mathfrak{N}_{w}$-models  satisfying the condition
\[x\pp x\text{ implies }x\approx y.\tag{I}\] 
\begin{remark}\label{rem:giardino}For any $\mathfrak{N}_{w}$-model $\mathcal{M}=(\alga,\pp,\{\tr,\fa\})$, $\mathcal{M}$ satisfies \eqref{i} if and only if it satisfies the condition
 \[x\ra x^{*}\pp\fa\ \text{implies}\ y\pp\fa.\] Indeed, as regards the left-to-right direction, one has that $x\ra x^{*}\pp\fa$ implies $x\circ x\pp\tr$ and so $x\pp x$. Therefore, $\alga$ is trivial and one has $y\pp\fa$, for any $y\in A.$ Conversely, $x\pp x$ implies $x\ra x^{*}\pp\fa$. So $x\ra y\pp\fa$ and $y\ra x\pp\fa$ hold. This entails that $x\approx y$ must hold in $\alga$ and so $\alga$ is trivial.  
\end{remark}

\noindent Upon considering the extension $\nel+\{(I)\}$, one has the following
\begin{theorem}
For any $\Gamma\cup\{\varphi\}\subseteq\mathrm{Fm}_{\mathcal{L}}$, the following holds: 
\[\Gamma\vdash_{\nel+\{\mathrm{I}\}}\varphi\text{ iff }\Gamma\models_{\mathfrak{N}_{w}^{\mathrm{I}}}\varphi.\] 
\end{theorem}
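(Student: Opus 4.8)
The plan is to recognize the statement as an instance of Theorem~\ref{thm:compl2}. Let $\Theta_{\mathrm I}$ denote the inference rule \eqref{i}, regarded as the set of rule schemes $\{(\{x\ra x^{*}\},y)\}$, which is finite in the sense required by Theorem~\ref{thm:compl2}. That theorem then yields $\Gamma\vdash_{\nel+\Theta_{\mathrm I}}\varphi$ iff $\Gamma\models_{\mathfrak{N}_{w}^{\Theta_{\mathrm I}}}\varphi$, so all that remains is to verify $\mathfrak{N}_{w}^{\Theta_{\mathrm I}}=\mathfrak{N}_{w}^{\mathrm I}$, i.e. that an $\mathfrak{N}_{w}$-model $\mathcal M=(\alga,\pp,\{\tr,\fa\})$ validates the first-order translation $(\{x\ra x^{*}\},y)^{\pp}$, namely $\forall x\forall y[(x\ra x^{*})\pp\fa\Rightarrow y\pp\fa]$, exactly when it validates the defining condition of $\mathfrak{N}_{w}^{\mathrm I}$, namely $x\pp x$ implies $x\approx y$.

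For this equivalence I would first record the term identity $x\ra x^{*}=(x\circ x^{**})^{*}=(x\circ x)^{*}$, valid in every weak $\mathcal N$-algebra because ${}^{*}$ is an involution, and then use Definition~\ref{def:nmod}(c),(d) to compute $(x\ra x^{*})\pp\fa$ iff $(x\circ x)^{*}\pp\fa$ iff $x\circ x\pp\tr$ iff $x\pp x$. Hence both conditions under comparison reduce to statements of the form ``if some element is $\pp$-reflexive then \dots''. If no element of $A$ is $\pp$-reflexive, both hold vacuously; if some $a\pp a$, then the second condition forces $\alga$ to be trivial, and in a one-element weak $\mathcal N$-algebra Definition~\ref{def:nmod}(a),(c),(d) force the unique element into $F_{\pp}$, so the first condition holds as well; conversely the first condition gives $F_{\pp}=A$, whence $x\ra y,\ y\ra x\in F_{\pp}$ for all $x,y$, and since $\mathfrak{N}_{w}$-models are reduced (Proposition~\ref{equivmatrixnelsonmo}) this forces $x=y$, i.e. the second condition. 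This is exactly the computation already carried out in Remark~\ref{rem:giardino}, so in the write-up I would simply appeal to it.

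If instead one prefers a self-contained proof, the alternative is to re-run the two halves of the proof of Theorem~\ref{thm:compl} with $\nel+\{\mathrm I\}$ replacing $\nel$. Soundness needs only one extra case, namely that \eqref{i} preserves ``$h(\cdot)\in F_{\pp}$'' in every $\mathcal M\in\mathfrak{N}_{w}^{\mathrm I}$, which is the left-to-right half of Remark~\ref{rem:giardino}. For completeness, from $\Gamma\not\vdash_{\nel+\{\mathrm I\}}\varphi$ build the Lindenbaum--Tarski structure $\mathcal M=(\mathbf{Fm}_{\mathcal L}/{\equiv},\pp_{\Gamma},\{\tr,\fa\})$ over $\mathrm{Cn}^{\nel+\{\mathrm I\}}(\Gamma)$ exactly as there; the same verifications show $\mathcal M\in\mathfrak{N}_{w}$, and the one new point is that $\mathcal M$ satisfies the condition of $\mathfrak{N}_{w}^{\mathrm I}$: if $[\psi]\pp_{\Gamma}[\psi]$ then $\psi\ra\psi^{*}\in\mathrm{Cn}^{\nel+\{\mathrm I\}}(\Gamma)$ by the definition of $\pp_{\Gamma}$, and a single application of \eqref{i} then puts every formula --- in particular $\psi\ra\chi$ and $\chi\ra\psi$ for arbitrary $\chi$ --- into $\mathrm{Cn}^{\nel+\{\mathrm I\}}(\Gamma)$, so $[\psi]=[\chi]$. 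As usual, the canonical homomorphism $x\mapsto[x]$ then witnesses $\Gamma\not\models_{\mathfrak{N}_{w}^{\mathrm I}}\varphi$.

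I do not expect a serious obstacle: the whole argument is bookkeeping on top of Theorems~\ref{thm:compl} and~\ref{thm:compl2}. The only delicate point is getting the soundness of \eqref{i} over $\mathfrak{N}_{w}^{\mathrm I}$ right --- equivalently, confirming that ``$x\pp x$ implies $x\approx y$'' is the correct first-order transcription of the rule \eqref{i} --- and making sure the two small auxiliary facts used there (that a trivial weak $\mathcal N$-algebra has $F_{\pp}=A$, and that every $\mathfrak{N}_{w}$-model is reduced) are invoked correctly.
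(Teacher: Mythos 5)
Your proposal is correct and takes essentially the same route as the paper: the theorem is stated there without a separate proof precisely because it is the instance of Theorem~\ref{thm:compl2} for $\Theta=\{(\{x\ra x^{*}\},y)\}$, with Remark~\ref{rem:giardino} supplying the identification $\mathfrak{N}_{w}^{\Theta}=\mathfrak{N}_{w}^{\mathrm I}$ that you verify. Your computation of $(x\ra x^{*})\pp\fa$ iff $x\pp x$ and the triviality argument via Definition~\ref{def:nmod}(b) match the paper's reasoning exactly.
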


\noindent Furthermore, to have a fully strongly connexive extension of $\nel$, and so satisfying also (Unsat2), one has to consider the logic obtained upon extending $\nel$ with the further inference rule schema 
\[\varphi\ra\psi^{*},\varphi\ra\psi\vdash\chi.\tag{I2}\label{i2}\] 
Indeed, take the class $\mathfrak{N}_{w}^{\mathrm{I+}}$ of $\mathfrak{N}_{w}$-models satisfying the condition
\[x\pp y^{*}\text{ and }x\pp y\quad\text{imply}\quad x\approx z.\tag{$\mathrm{I2}$}\label{i2}\]
\begin{remark}Upon reasoning as in Remark \ref{rem:giardino}, one has that \eqref{i2} is equivalent to the condition
\[x\ra y\pp\fa\text{ and } x\ra y^{*}\pp\fa\text{ imply } z\pp\fa.\]
 
\end{remark}
Clearly, $\mathfrak{N}_{w}^{\mathrm{I}}\subseteq\mathfrak{N}_{w}^{\mathrm{I+}}$ and $\mathfrak{N}_{w}^{\mathrm{I+}}$ is the class of $\mathfrak{N}_{w}$-models whose non-trivial members are all and only those not satisfying both $\varphi\ra\psi$, $\varphi\ra\psi^{*}$ (for any $\varphi$ and $\psi$), and so also $\varphi\ra\varphi^{*}$ (for any $\varphi\in\mathrm{Fm}_{\mathcal{L}}$).
\begin{theorem}
For any $\Gamma\cup\{\varphi\}\subseteq\mathrm{Fm}_{\mathcal{L}}$, the following holds: 
\[\Gamma\vdash_{\nel+\{\mathrm{I2}\}}\varphi\text{ iff }\Gamma\models_{\mathfrak{N}_{w}^{\mathrm{I+}}}\varphi.\] 
\end{theorem}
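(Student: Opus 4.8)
The plan is to obtain the statement as an instance of the general completeness result for finite extensions, Theorem~\ref{thm:compl2}, in exactly the way the companion theorem for $\nel+\{\mathrm{I}\}$ is obtained. First I would observe that $\nel+\{\mathrm{I2}\}$ coincides with the extension $\nel+\Theta$ for
\[\Theta:=\{(\{x\ra y^{*},\,x\ra y\},\,z)\},\]
the single rule obtained from the schema $\mathrm{(I2)}$ by choosing three pairwise distinct variables: every instance of the schema is a substitution instance of this rule and conversely, so by substitution-invariance $\vdash_{\nel+\{\mathrm{I2}\}}\,=\,\vdash_{\nel+\Theta}$. Since $|\Theta|=1<\omega$ and its unique premise set is finite, $\nel+\Theta$ is a finite extension, so Theorem~\ref{thm:compl2} yields $\Gamma\vdash_{\nel+\Theta}\varphi$ iff $\Gamma\models_{\mathfrak{N}_{w}^{\Theta}}\varphi$ for all $\Gamma\cup\{\varphi\}\subseteq\mathrm{Fm}_{\mathcal{L}}$.

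It then remains only to show $\mathfrak{N}_{w}^{\Theta}=\mathfrak{N}_{w}^{\mathrm{I+}}$, for then $\models_{\mathfrak{N}_{w}^{\Theta}}\,=\,\models_{\mathfrak{N}_{w}^{\mathrm{I+}}}$ and the claim follows. By construction $\mathfrak{N}_{w}^{\Theta}$ is the class of $\mathfrak{N}_{w}$-models validating the sentence $(\{x\ra y^{*},x\ra y\},z)^{\pp}$, that is,
\[\forall x\,\forall y\,\forall z\;[\,x\ra y^{*}\pp\fa\ \text{ and }\ x\ra y\pp\fa\ \Rightarrow\ z\pp\fa\,],\]
which is exactly the condition appearing in the Remark just before the present theorem. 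By that Remark --- equivalently, reasoning as in Remark~\ref{rem:giardino}: unfolding $\ra$ and using clauses (c), (d) of Definition~\ref{def:nmod} together with $x^{\ast\ast}\approx x$, one rewrites $x\ra y^{*}\pp\fa$ as $x\pp y$ and $x\ra y\pp\fa$ as $x\pp y^{*}$; moreover ``$z\pp\fa$ for every $z$'' means $F_{\pp}=A$, which by clause (b) of Definition~\ref{def:nmod} forces $\alga$ to be trivial (conversely a trivial model plainly satisfies $F_{\pp}=A$) --- the displayed sentence is equivalent to the quasi-identity $\mathrm{(I2)}$, ``$x\pp y^{*}$ and $x\pp y$ imply $x\approx z$'', which defines $\mathfrak{N}_{w}^{\mathrm{I+}}$. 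Hence $\mathfrak{N}_{w}^{\Theta}=\mathfrak{N}_{w}^{\mathrm{I+}}$, as wanted.

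Should a self-contained argument be preferred, it runs along the lines of Theorem~\ref{thm:compl}. For soundness the only new case is the rule $\mathrm{(I2)}$: given $\mathcal{M}\in\mathfrak{N}_{w}^{\mathrm{I+}}$ and $h$ with $h(\varphi\ra\psi^{*})\pp\fa$ and $h(\varphi\ra\psi)\pp\fa$, the rewriting above gives $h(\varphi)\pp h(\psi)$ and $h(\varphi)\pp h(\psi)^{*}$, so instantiating $\mathrm{(I2)}$ at $x:=h(\varphi)$, $y:=h(\psi)$ and $z$ arbitrary forces $\alga$ trivial, whence $h(\chi)\pp\fa$. For completeness one repeats the Lindenbaum--Tarski construction on $\mathrm{Cn}^{\nel+\{\mathrm{I2}\}}(\Gamma)$, the only extra point being to check that the canonical incompatibility relation $\pp_{\Gamma}$ satisfies $\mathrm{(I2)}$: if $\psi/{\equiv}\pp_{\Gamma}\chi^{*}/{\equiv}$ and $\psi/{\equiv}\pp_{\Gamma}\chi/{\equiv}$, then $\psi\ra\chi,\psi\ra\chi^{*}\in\mathrm{Cn}^{\nel+\{\mathrm{I2}\}}(\Gamma)$, so $\mathrm{(I2)}$ puts every formula into $\mathrm{Cn}^{\nel+\{\mathrm{I2}\}}(\Gamma)$ and collapses $\mathbf{Fm}_{\mathcal{L}}/{\equiv}$ to a single point.

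I do not expect a genuine obstacle: the content is essentially that of Theorem~\ref{thm:compl2} together with the Remark preceding the statement. The two points to handle with care are the identification $\nel+\{\mathrm{I2}\}=\nel+\Theta$ (needed to invoke Theorem~\ref{thm:compl2}) and the correct unfolding of the first-order rule-condition $(\{x\ra y^{*},x\ra y\},z)^{\pp}$ --- in particular, noticing that the auxiliary variable $z$, occurring only in the conclusion of the rule, is precisely what, via clause (b) of Definition~\ref{def:nmod}, turns the condition into the triviality clause $\mathrm{(I2)}$.
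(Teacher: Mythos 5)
Your proposal is correct and follows essentially the same route as the paper, which leaves this theorem's proof implicit: it is exactly an instance of Theorem~\ref{thm:compl2} for the finite extension by the single rule $\mathrm{(I2)}$, combined with the preceding Remark identifying the model-theoretic condition $(\{x\ra y^{*},x\ra y\},z)^{\pp}$ with the quasi-identity defining $\mathfrak{N}_{w}^{\mathrm{I+}}$. Your careful unfolding of the universally quantified conclusion variable $z$ into the triviality clause via Definition~\ref{def:nmod}(b) is precisely the point the paper relies on.
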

Considerations made so far give further substance to Kapsner's intuitions, as they show that concepts he introduces are entailed by, somewhat ``hidden'' below, ideas raised at the very dawn of modern connexive logic. However, as the same author points out:
\begin{quote}
[...]one might well have doubts concerning superconnexivity. A superconnexive logic requires an implication that always turns out to be true if the antecedent is unsatisfiable; many implicational connectives in the literature indeed behave this way, but there have also been deep reservations about those conditionals voiced by relevant and paraconsistent logicians. In the case at hand, they would balk at the validity of $(A\to\neg A)\to B$ because there is no connection between the antecedent and the consequent at all. Likewise, $(A\to\neg A)\models B$ would not meet what they would require of a decent consequence relation. \cite[pp. 4]{Kap1}.
\end{quote}
Actually, Kapsner's remark is relevant to our discourse since, as a consequence of previous results, in order to have a logic capable of  coping with Nelson's intuition about the irreflexivity of incompatibility, one has to endorse (at least one half of) Kapsner' strong connexivity and, in turn, the weakly superconnexive principle he recognizes as incompatible with a relevantist approach. And this may represent a challenge for Nelson's view of logic since, due to the way Nelson means its intensional nature, it is supposed to adhere to  kind of relevantist principles.\footnote{We highlight that we are not claiming that $\nel$ or any of the logics outlined in this paper are relevant logics in the narrow sense. Instead, as already remarked in the Introduction, we confine ourselves to point out that Nelson's conception of intensional logic is strongly tied to notions like e.g. ``real use in proof'' that most logicians accept as hallmarks of the relevantist  paradigm, see e.g. \cite{Mares}.} 
 However, we observe that in $\nel$ (as in all the logics we deal with in this work) two \emph{distinct} notions of entailment coexist. The first, embodied by $\vdash_{\nel}$, calls into question \emph{truth} (or incompatibility with falsehood). Such a consequence relation,  that we may call \emph{external}, establishes a relationships between sets of formulas and formulas to the effect that, whenever the former hold true in a model, the latter do. The second notion of ``consequence'', this time embodied by $\ra$, and that we may call \emph{internal}, is what Nelson really considers the true concept of intensional (relevant, as we might understand it) entailment. Therefore, if one confines oneself to considering the latter notion of entailment as the one whose ``relevant features'' should truly be taken into account (which might be reasonable, see Remark \ref{rem:explosionnoparad} and Remark \ref{prop:superconn} below), then assuming \emph{at least} $\nel+\{\mathrm{I}\}$ as a suitable extension of $\nel$,  capable of better aligning with Nelson’s intuition, might be worth considering..

In the light of the above remarks, if one aims at obtaining a full fledged strongly connexive logic, one may also consider extending $\nel$ with the further inference rule 
\[\varphi,\varphi^{*}\vdash\psi.\tag{ECQ}\label{encq}\]
The logic thus obtained, that we denote by $\nel^{\mathrm{ECQ}}$, is sound and complete w.r.t. the class $\mathfrak{N}_{w}^{S}\subseteq\mathfrak{N}_{w}$ of $\mathfrak{N}$-models $\mathcal{M}=(\alga,\pp,\{\tr,\fa\})$ satisfying the condition:
\[x\pp\tr\text{ and }x\pp\fa\text{ imply }z\pp\fa\tag{s}\label{str}.\]

\begin{example}\label{ex:efq} The structure $\mathcal{M}$ from Example \ref{ex.ceci} (see below) is a member of $\mathfrak{N}_{w}^{S}$. 
\end{example}

\begin{theorem} For any $\Gamma\cup\{\varphi\}\subseteq\mathrm{Fm}_{\mathcal{L}}$, the following holds: 
\[\Gamma\vdash_{\nel^{\mathrm{ECQ}}}\varphi\text{ iff }\Gamma\models_{\mathfrak{N}_{w}^{S}}\varphi.\]
\end{theorem}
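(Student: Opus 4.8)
The plan is to read this completeness result off from Theorem~\ref{thm:compl2} together with a single one-line semantic translation, so that essentially no new argument is needed. First I would note that, since the calculus is presented via rule \emph{schemas}, the logic $\nel^{\mathrm{ECQ}}$ coincides with the finite extension $\nel+\Theta$ for $\Theta=\{(\{x,x^{*}\},y)\}$, where $x,y$ are two distinct propositional variables: substitution-invariance of $\vdash_{\nel+\Theta}$ recovers every instance $\varphi,\varphi^{*}\vdash\psi$ of \eqref{encq} from this one pair, exactly as the schematic axioms (A1)--(A7) are handled elsewhere in the paper. Since $|\Theta|=1<\omega$ and its unique premise set $\{x,x^{*}\}$ is finite, $\Theta$ is finite in the sense of Section~\ref{sec:algsemnl}, so Theorem~\ref{thm:compl2} applies and yields, for every $\Gamma\cup\{\varphi\}\subseteq\mathrm{Fm}_{\mathcal{L}}$,
\[\Gamma\vdash_{\nel+\Theta}\varphi\quad\Longleftrightarrow\quad\Gamma\models_{\mathfrak{N}_{w}^{\Theta}}\varphi.\]

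It thus suffices to prove $\mathfrak{N}_{w}^{\Theta}=\mathfrak{N}_{w}^{S}$. By definition $\mathfrak{N}_{w}^{\Theta}$ is the class of those $\mathfrak{N}_{w}$-models satisfying the first-order condition $(\{x,x^{*}\},y)^{\pp}$, which unwinds to
\[\forall x\,\forall y\;[\,x\pp\fa\ \text{and}\ x^{*}\pp\fa\ \Longrightarrow\ y\pp\fa\,].\]
In every $\mathfrak{N}_{w}$-model one has $x^{*}\pp\fa$ iff $x\pp\tr$ by Definition~\ref{def:nmod}(d); replacing the conjunct $x^{*}\pp\fa$ by the equivalent $x\pp\tr$ turns the displayed condition verbatim (after renaming $y$ to $z$) into condition~\eqref{str}, i.e. ``$x\pp\tr$ and $x\pp\fa$ imply $z\pp\fa$''. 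Hence a member of $\mathfrak{N}_{w}$ belongs to $\mathfrak{N}_{w}^{\Theta}$ precisely when it belongs to $\mathfrak{N}_{w}^{S}$, and combining this with the biconditional above gives $\Gamma\vdash_{\nel^{\mathrm{ECQ}}}\varphi$ iff $\Gamma\models_{\mathfrak{N}_{w}^{S}}\varphi$, as desired.

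For a self-contained write-up one could equally well just rerun the proof of Theorem~\ref{thm:compl}: soundness of \eqref{encq} over $\mathfrak{N}_{w}^{S}$ is immediate from \eqref{str} together with Definition~\ref{def:nmod}(d), while for completeness the canonical Lindenbaum--Tarski model built from $\mathrm{Cn}^{\nel^{\mathrm{ECQ}}}(\Gamma)$ lands in $\mathfrak{N}_{w}^{S}$ because, if $\psi/{\equiv}\pp_{\Gamma}\tr$ and $\psi/{\equiv}\pp_{\Gamma}\fa$, then $\psi^{*},\psi\in\mathrm{Cn}^{\nel^{\mathrm{ECQ}}}(\Gamma)$, so \eqref{encq} collapses the theory to all of $\mathrm{Fm}_{\mathcal{L}}$ and hence $\chi/{\equiv}\pp_{\Gamma}\fa$ for every $\chi$. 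The only point that needs care -- the ``hard part'', such as it is -- is the bookkeeping of the schema-to-condition translation and the recognition, via Definition~\ref{def:nmod}(d), that the resulting condition is literally \eqref{str}; everything else is inherited from results already in place.
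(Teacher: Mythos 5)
Your proposal is correct and follows essentially the same route as the paper, which states this theorem as an instance of the general completeness result for finite extensions (Theorem~\ref{thm:compl2}) applied to the single-rule extension by \eqref{encq}, with the identification of the induced semantic condition with \eqref{str} via Definition~\ref{def:nmod}(d). Both your schema-to-condition bookkeeping and your fallback Lindenbaum--Tarski argument match what the paper leaves implicit.
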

\begin{remark}\label{rem:explosionnoparad}
It is worth observing that, even if we assume \emph{explosion}, the logic one obtains still avoids some paradoxes of material implication. For example, one has $\not\vdash_{\nel^{\mathrm{ECQ}}}x\ra(x^{*}\ra y)$. Indeed, upon considering the model provided by Example \ref{ex:efq}, one has that $a\ra(a^{*}\ra a)=(a\circ(b\circ b))^{*}=(a\circ a)^{*}=a^{*}=b\not\pp\fa$. The same counterexample shows e.g. that $\not\vdash_{\nel^{\mathrm{ECQ}}}x\ra(y\ra x)$. 
\end{remark}
\noindent Of course, the next further remark is in order. We leave its simple proof to the reader.
\begin{remark}\label{prop:superconn} \eqref{i2} (and so \eqref{i}) holds in $\nel^{\mathrm{ECQ}}$.
\end{remark}
\noindent Actually, although essential for the irreflexivity of $\pp$, we are not aware if Nelson considered (I), (I2) or EFQ as valid inference rules, at least whenever truth is in question. However, as it will be shown in the sequel, they are consequences of principles he accepts explicitly.

As already recalled in the Introduction, in several works Nelson assumes a neat position against the axiom of disjunction introduction
\[\varphi\ra\varphi\oplus\psi,\tag{D}\label{d}\]
where, {as it can be inferred from the use of De Morgan laws in some of his arguments} (see e.g. \cite[p. 269]{Nelson2}) $$\varphi\oplus\psi:=(\varphi^{*}\otimes\psi^{*})^{*}.$$
{We highlight  that, by $\oplus$, we are referring to the ``mere'' factual disjunction $\lor$ (using  Nelson's terms and notation, see \cite[p. 446]{Nelson1}, cf. \cite{Nelson2}) and not to Nelson's \emph{intensional logical sum} $\bigvee$ that he defines in terms of entailment upon setting $$\varphi\bigvee\psi:=\varphi^{*}\ra\psi,$$ and which is essentially different from $\lor$, although related to it. Indeed, as Nelson states in several venues (\cite[p. 132]{Nelson3}, cf. \cite[p. 446]{Nelson1}), if $p\bigvee q$ is true, then $p\oplus q$ must hold as well, although the converse is false, in general.\footnote{We observe that J. Woods in \cite[p. 65]{Woods1} interprets the relationship between Nelson's intensional disjunction and ``factual'' disjunction differently. Indeed, he  writes: ``$p$ or $q$ [\emph{intensional disjunction}], if true, is necessarily true, whereas, even if true, $p\lor q$ could be false''.}}\\
As Nelson observes (see e.g. \cite[pp. 273]{Nelson2}), there is no way of deducing $\varphi\oplus\psi$ from $\varphi$ by logical analysis since, although $\varphi\oplus\psi$ asserts less than $\varphi$ from the perspective of truth, once meaning is considered, what it asserts is \emph{more} than $\varphi$. 
Actually, beside its incompatibility with Nelson's philosophy, as a consequence of analogous results concerning conjunction simplification, \eqref{d} yields inconsistencies. 

\begin{proposition}$\nl+\{\eqref{d}\}$ is trivial. 
\end{proposition}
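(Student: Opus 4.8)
The plan is to show that, over $\nl$, the disjunction-introduction schema \eqref{d} is merely a contrapositive reformulation of the conjunction-simplification schema \eqref{simpl}, so that $\nl+\{\eqref{d}\}$ proves every instance of \eqref{simpl} and triviality becomes immediate from Proposition \ref{prop:trivial}.

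First I would record two facts about $\nl$ alone, neither of which uses \eqref{d}. The \emph{congruence fact}: ``$\vdash_{\nl}\varphi\Leftrightarrow\psi$'' behaves as a congruence on $\mathbf{Fm}_{\mathcal{L}}$ --- reflexivity and symmetry are trivial (the latter using \eqref{a5bis}), transitivity follows from one use of $\mathrm{(Eq)}_{1}$ together with (CE) and (Adj), and compatibility with $\otimes,\circ,{}^{*}$ is obtained by noting that $\vdash_{\nl}\chi\Leftrightarrow\chi$ (from (A1) and (Adj)) and then replacing, via $\mathrm{(Eq)}_{1}$, the appropriate occurrences of $\mu$ by $\nu$ in $\chi\Leftrightarrow\chi$ whenever $\vdash_{\nl}\mu\Leftrightarrow\nu$; in particular $\mathrm{(MP)}_{1}$ (from $\vdash_{\nl}\mu\Leftrightarrow\nu$ and $\vdash_{\nl}\mu$ infer $\vdash_{\nl}\nu$) is derivable. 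The \emph{contraposition fact}: unravelling $\varphi\ra\psi:=(\varphi\circ\psi^{*})^{*}$ and using the commutativity of $\circ$ (A2 and its converse), the involution laws $\varphi^{**}\Leftrightarrow\varphi$ (A3 and its converse, derivable as in \cite{MarPa}), and the congruence fact, one gets $\vdash_{\nl}(\varphi\ra\psi)\Leftrightarrow(\psi^{*}\ra\varphi^{*})$ for all $\varphi,\psi$; that is, $\ra$ is contrapositive-invariant modulo $\nl$-provable equivalence.

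The main step is then a short computation. Spelling out $\oplus$, an arbitrary instance of \eqref{d} is $\varphi\ra(\varphi^{*}\otimes\psi^{*})^{*}$; by the contraposition fact this is $\nl$-equivalent to $(\varphi^{*}\otimes\psi^{*})^{**}\ra\varphi^{*}$, and hence, using $\vdash_{\nl}(\varphi^{*}\otimes\psi^{*})^{**}\Leftrightarrow(\varphi^{*}\otimes\psi^{*})$ and the congruence fact, to $(\varphi^{*}\otimes\psi^{*})\ra\varphi^{*}$. So $\mathrm{(MP)}_{1}$ yields $\vdash_{\nl+\{\eqref{d}\}}(\varphi^{*}\otimes\psi^{*})\ra\varphi^{*}$ for all $\varphi,\psi$; instantiating $\varphi:=\chi^{*}$, $\psi:=\xi^{*}$ for arbitrary $\chi,\xi\in\mathrm{Fm}_{\mathcal{L}}$ and removing the double negations via $\chi^{**}\Leftrightarrow\chi$, $\xi^{**}\Leftrightarrow\xi$ and the congruence fact, we obtain $\vdash_{\nl+\{\eqref{d}\}}\chi\otimes\xi\ra\chi$. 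Thus every instance of \eqref{simpl} is a theorem of $\nl+\{\eqref{d}\}$, so $\vdash_{\nl+\{\mathrm{S}\}}\ \subseteq\ \vdash_{\nl+\{\eqref{d}\}}$; since $\nl+\{\mathrm{S}\}$ is inconsistent by Proposition \ref{prop:trivial}, hence derives every formula, so does $\nl+\{\eqref{d}\}$, which is therefore trivial by monotonicity.

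The step I would be most careful about is the congruence fact: because in $\nl$ the inference rules apply only to provable sentences, one must check that replacement of provable equivalents genuinely propagates through arbitrary contexts --- which is precisely what $\mathrm{(Eq)}_{1}$ delivers, being stated for theorems and permitting the replacement of ``one or more occurrences'' --- and that the double-negation elimination $\varphi^{**}\ra\varphi$ used in the contraposition fact is indeed available in $\nl$ (it is, by the Mares--Paoli argument). Everything else is routine unfolding of the definitions of $\ra$ and $\oplus$; one could, if desired, also note that the converse derivation $\eqref{simpl}\vdash\eqref{d}$ goes through dually, so that over $\nl$ the two schemas are interderivable.
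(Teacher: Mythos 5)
Your proof is correct and follows essentially the same route as the paper's: both reduce an arbitrary instance of \eqref{d} to the corresponding instance of \eqref{simpl} via contraposition and double-negation elimination, and then invoke Proposition \ref{prop:trivial}. The only (cosmetic) difference is that the paper uses the implicational contraposition theorem $(\varphi\ra\psi)\ra(\psi^{*}\ra\varphi^{*})$ together with (MP) and $\mathrm{(Eq)}_{1}$, whereas you package the same content as a provable equivalence and push it through the replacement machinery via $\mathrm{(MP)}_{1}$.
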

\begin{proof}
Note that, (a) $\vdash_{\nl}(\varphi\ra\psi)\ra(\psi^{*}\ra\varphi^{*})$. Consequently, $\vdash_{\nl+\{\eqref{d}\}}\varphi\ra\varphi\oplus\psi$ entails (by (a), (MP), and $\mathrm{(Eq)}_{1}$) $\vdash_{\nl+\{\eqref{d}\}}\varphi^{*}\otimes\psi^{*}\ra\varphi^{*}$ which, in turn, entails $\vdash_{\nl+\{\eqref{d}\}}\varphi\otimes\psi\ra\varphi$. Therefore, $\nl+\{\eqref{d}\}$ is trivial by Proposition \ref{prop:trivial}.
\end{proof}
\noindent Nevertheless, Nelson argues, like conjunction loses its ``unity'', namely its intensional character, when considered from the perspective of truth (and so outside the scope of entailment), to the effect that (CE) or $\mathrm{(CE)^{s}}$ become acceptable inference rules, the same holds for \emph{disjunction}. Indeed, as Nelson points out in \cite[p. 272]{Nelson2}, ``when we deal with truth-values we may disjoin to any true proposition any proposition and assert the whole thing. This is because the resulting disjunctive proposition ``$p$ or $q$'' is less determinate than the proposition with which we start'' (cf. \cite[p. 448]{Nelson1}). {Among other things, this passage provides a further evidence for concluding that what Nelson is referring to is not intensional disjunction as, otherwise, this would led us to admit that, if a proposition $\varphi$ is true, then $\varphi\bigvee\varphi=\varphi^{\star}\ra\varphi=(\varphi^{*}\circ\varphi^{*})^{*}$ is true. And this is, of course, unacceptable from Nelson's perspective.}

Therefore, let us consider the logics $\nl_{d}$ ($\nlas_{d}$) and $\nel_{d}$ ($\nelas_{d}$) obtained upon extending $\nl$ ($\nlas$) and $\nel$ ($\nelas$) with the inference rule schemas
\begin{center}
\AxiomC{$\vdash\varphi$}\RightLabel{$\mathrm{(DI)}$}
\UnaryInfC{$\vdash\varphi\oplus\psi$}
\DisplayProof and  
\AxiomC{$\varphi$}\RightLabel{$\mathrm{(DI)}^{s}$}
\UnaryInfC{$\varphi\oplus\psi$}
\DisplayProof,
\end{center}
respectively. Upon considering the class $\mathfrak{N}_{w}^{d}$ ($\mathfrak{N}^{d}$) of $\mathfrak{N}_{w}$-models ($\mathfrak{N}$-models) satisfying the further condition 
\begin{equation}
x\pp\fa\text{ implies }x\oplus y\pp\fa,\tag{di}\label{dis} 
\end{equation}
by Theorem \ref{thm:compl2} and Theorem \ref{thm:compl3}, one has the following
\begin{theorem}For any $\Gamma\cup\{\varphi\}\subseteq\mathrm{Fm}_{\mathcal{L}}$, the following hold:
\begin{enumerate}
\item $\Gamma\vdash_{\mathsf{NeL}+\{\mathrm{(DI)}^{s}\}}\varphi\text{ iff }\Gamma\models_{\mathfrak{N}_{w}^{\mathrm{d}}}\varphi$;
\item $\Gamma\vdash_{\nelas+\{\mathrm{(DI)}^{s}\}}\varphi\text{ iff }\Gamma\models_{\mathfrak{N}^{\mathrm{d}}}\varphi$;
\item $\vdash_{\nl+\{\mathrm{(DI)}\}}\varphi$ iff $\models_{\mathfrak{N}_{w}^{\mathrm{d}}}\varphi$;
\item $\Gamma\vdash_{\nlas+\{\mathrm{(DI)}\}}\varphi\text{ iff }\Gamma\models_{\mathfrak{N}^{\mathrm{d}}}\varphi$.
\end{enumerate}

\end{theorem}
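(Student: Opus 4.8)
The plan is to read all four equivalences directly off the general completeness‑transfer machinery already in place: Theorem \ref{thm:compl2} for the ``strong'' systems $\nel+\{(\mathrm{DI})^{s}\}$ and $\nelas+\{(\mathrm{DI})^{s}\}$, and the Proposition \ref{prop:odifreddi}--style passage behind Theorem \ref{thm:compl3} for the ``weak'' systems $\nl+\{(\mathrm{DI})\}$ and $\nlas+\{(\mathrm{DI})\}$, all applied to the concrete finite set of rules that encodes disjunction introduction. Accordingly, the whole argument amounts to identifying the right $\Theta$'s and translating them into the corresponding classes of models.

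First I would set $\Theta_{1}:=\{(\{x\},x\oplus y)\}$ and $\Theta_{2}:=\{(\emptyset,(x\otimes y)\otimes z\ra x\otimes(y\otimes z)),\,(\emptyset,x\otimes(y\otimes z)\ra(x\otimes y)\otimes z),\,(\{x\},x\oplus y)\}$. Both are finite in the sense fixed before Theorem \ref{thm:compl2}, and, reading each pair $(\Gamma_{0},\psi_{0})$ as the inference rule $\Gamma_{0}\vdash\psi_{0}$, one has $\nel+\Theta_{1}=\nel+\{(\mathrm{DI})^{s}\}$ and $\nel+\Theta_{2}=\nelas+\{(\mathrm{DI})^{s}\}$ (recall $\nelas=\nel+\{\eqref{as1},\eqref{as2}\}$). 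The only genuine computation is to check that the first‑order clause $(\{x\},x\oplus y)^{\pp}$ attached to the pair $(\{x\},x\oplus y)$ by the construction preceding Theorem \ref{thm:compl2} — with $\gamma_{1}(x,y)=x$ and $\varphi(x,y)=x\oplus y$ — is exactly $\forall x,y\,[x\pp\fa\text{ implies }x\oplus y\pp\fa]$, that is, condition \eqref{dis} defining $\mathfrak{N}_{w}^{\mathrm{d}}$; here one simply unfolds the term‑defined connective $\varphi\oplus\psi=(\varphi^{*}\otimes\psi^{*})^{*}$, noting that the Lindenbaum--Tarski construction of Theorem \ref{thm:compl} handles defined operations with no change. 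Together with the two empty‑premise associativity clauses, which translate into the defining equations of $\mathcal{N}$ by reducedness exactly as in the identification $\mathfrak{N}=\mathfrak{N}_{w}^{\{\eqref{as1},\eqref{as2}\}}$, this yields $\mathfrak{N}_{w}^{\Theta_{1}}=\mathfrak{N}_{w}^{\mathrm{d}}$ and $\mathfrak{N}_{w}^{\Theta_{2}}=\mathfrak{N}^{\mathrm{d}}$. Substituting $\Theta_{1},\Theta_{2}$ into Theorem \ref{thm:compl2} then gives items (1) and (2) verbatim.

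For items (3) and (4), which concern the weak calculi whose rules fire only on theorems, I would first observe that $\vdash_{\nl+\{(\mathrm{DI})\}}\varphi$ iff $\vdash_{\nel+\{(\mathrm{DI})^{s}\}}\varphi$, and likewise $\vdash_{\nlas+\{(\mathrm{DI})\}}\varphi$ iff $\vdash_{\nelas+\{(\mathrm{DI})^{s}\}}\varphi$: one inclusion is immediate, since every weak rule is an instance of its strong counterpart, and the other is the same straightforward induction on proof length as in Proposition \ref{prop:odifreddi} — in a derivation from no hypotheses every line is already a theorem, so each application of $\mathrm{(MP)}^{s}$, $\mathrm{(Adj)}^{s}$, $\mathrm{(CE)}^{s}$, $\mathrm{(Eq)}_{1}^{s}$ and $(\mathrm{DI})^{s}$ can be replayed by the corresponding weak rule. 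Combining this with items (1) and (2) (equivalently, with Theorem \ref{thm:compl3} for the relevant $\Theta$) yields $\vdash_{\nl+\{(\mathrm{DI})\}}\varphi$ iff $\models_{\mathfrak{N}_{w}^{\mathrm{d}}}\varphi$ and $\vdash_{\nlas+\{(\mathrm{DI})\}}\varphi$ iff $\models_{\mathfrak{N}^{\mathrm{d}}}\varphi$; the occurrences of $\Gamma$ in the formulation of (4) are inessential for such theorem‑calculi, exactly as they are for $\nl$ and $\nlas$ themselves.

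There is no deep obstacle here: the proof is bookkeeping built on Theorems \ref{thm:compl2}--\ref{thm:compl3}. The single point that needs care — and the one place where a slip would be easy — is the verification that the first‑order transcription of the \emph{rule} $x\vdash x\oplus y$ coincides on the nose with the semantic clause \eqref{dis} once $\oplus$ is expanded to $(\varphi^{*}\otimes\psi^{*})^{*}$; after that, all four equivalences follow by pure instantiation of the cited theorems, together with the Proposition \ref{prop:odifreddi}‑style reduction for the weak systems.
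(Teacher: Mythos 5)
Your proposal is correct and follows exactly the route the paper takes: the paper offers no separate argument for this theorem, merely defining $\mathfrak{N}_{w}^{\mathrm{d}}$ ($\mathfrak{N}^{\mathrm{d}}$) via condition \eqref{dis} and invoking Theorem \ref{thm:compl2} and Theorem \ref{thm:compl3}, which is precisely the instantiation you carry out (with the welcome extra care of checking that the first-order transcription of $x\vdash x\oplus y$ is literally \eqref{dis}, and of handling the theorem-only calculi via the Proposition \ref{prop:odifreddi}-style induction). Your observation that the $\Gamma$ in item (4) is inessential for a theorem-only calculus is also consistent with how the paper states the analogous items of Theorem \ref{thm:compl3}.
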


\begin{example}\label{ex.ceci}
Let us consider the structure $\mathcal{M}=(\alga=(\{a,b,c,d,e,f\},\otimes,\circ,{}^{*}),\pp,\{\tr,\fa\})$ such that $\circ$, $\otimes$ and ${}^{*}$ are defined according to the following Cayley tables
\begin{center}
\begin{tabular}{c| c c c c c c } 
$\circ$ & a & b & c & d & e & f \\ 
\hline  
a& a & b & c & a & c & a \\ 
b& b & a & a & b & a & b \\
c& c & a & a & b & a & c \\
d& a & b & b & a & b & a \\
e&  c& a & a & b & a & b \\
f&  a&  b& c & a & b & a \\
\hline 
\end{tabular}\quad\quad
\begin{tabular}{c| c c c c c c } 
$\otimes$ & a & b & c & d & e & f\\ 
\hline  
a &  c & d & a & b & f & e \\ 
b&  d&  b& b& d& b& d\\
c&  a&  b& c& d& e& f\\
d&  b&  d& d& b& d& b\\
e&  f&  b& e& d& b& d\\
f&  e&  d& f& b& d& b\\
\hline 
\end{tabular}\quad\quad
\begin{tabular}{c| c } $\ast$ &  \\ 
\hline  
a &  b\\ 
b &  a\\
c &  d\\
d &  c\\
e &  f\\
f &  e\\
\hline 
\end{tabular}
\end{center}
while $\pp=\{(a,\fa),(c,\fa),(b,\tr),(d,\tr),(a,b),(b,a),(b,d),(b,f),(c,d),(d,b),(d,c),(d,e),(e,d),\\ (e,f),(f,b), (f,e)\}$. It can be verified that $\mathcal{M}\in\mathfrak{N}^{\mathrm{d}}$.
\end{example}
 
\begin{proposition}\label{prop:pizzy}The following hold: $$\varphi,\varphi^{*}\vdash_{\mathsf{NeL}+\{\mathrm{(DI)}^{s}\}}\psi.$$
\end{proposition}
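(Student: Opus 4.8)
The plan is to extract from $\nel$ a single theorem-schema that plays, for our argument, the role that formula (a) plays in the proof of Proposition~\ref{prop:trivial}, and then to feed it what $\mathrm{(DI)}^{s}$ yields from the two premises. Concretely, I would first prove the auxiliary $\nel$-theorem
\[(\varphi\otimes(\varphi\otimes\psi^{*})^{*})\ra\psi.\tag{$\dagger$}\]
To get ($\dagger$), I instantiate (A6) with its three metavariables sent respectively to $\varphi$, $\psi^{*}$ and $\varphi\otimes\psi^{*}$; this produces $((\varphi\otimes\psi^{*})\ra(\varphi\otimes\psi^{*}))\ra((\varphi\otimes(\varphi\otimes\psi^{*})^{*})\ra\psi^{**})$, whose antecedent is an instance of (A1). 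Hence \Mps\ gives $\vdash_{\nel}(\varphi\otimes(\varphi\otimes\psi^{*})^{*})\ra\psi^{**}$, and replacing $\psi^{**}$ by $\psi$ via \Eqts\ (using $\vdash_{\nel}\psi^{**}\Leftrightarrow\psi$, exactly as in the proof of Proposition~\ref{prop:trivial}) yields ($\dagger$). Note that ($\dagger$) is a genuine theorem of $\nel$, hence available in $\nel+\{\mathrm{(DI)}^{s}\}$ as well.

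The second step exploits the premises. Applying $\mathrm{(DI)}^{s}$ to $\varphi^{*}$ with disjunct $\psi$ gives $\varphi^{*}\vdash\varphi^{*}\oplus\psi=(\varphi^{**}\otimes\psi^{*})^{*}$, and one further application of \Eqts\ (this time with $\vdash_{\nel}\varphi^{**}\Leftrightarrow\varphi$) rewrites the latter as $(\varphi\otimes\psi^{*})^{*}$; thus $\varphi^{*}\vdash_{\nel+\{\mathrm{(DI)}^{s}\}}(\varphi\otimes\psi^{*})^{*}$. Adjoining this with the remaining premise $\varphi$ by means of \Adjs\ gives $\varphi,\varphi^{*}\vdash\varphi\otimes(\varphi\otimes\psi^{*})^{*}$, and a last application of \Mps\ to ($\dagger$) and this formula delivers $\psi$. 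This establishes $\varphi,\varphi^{*}\vdash_{\nel+\{\mathrm{(DI)}^{s}\}}\psi$ for all $\varphi,\psi$.

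The only delicate point I foresee is finding the correct instance of (A6) in the first step: the choice $\chi:=\varphi\otimes\psi^{*}$ is what makes the antecedent of the (A6)-instance collapse to a reflexivity, so that (A6) degenerates into the usable ``detachment'' schema ($\dagger$) — despite the fact that neither conjunction simplification nor the stronger scheme (a) of Proposition~\ref{prop:trivial} is derivable here, the system being non-trivial. Once ($\dagger$) is in hand, everything else is routine bookkeeping with the double-negation equivalences and the strong rules $\mathrm{(DI)}^{s}$, \Adjs, \Mps. Alternatively, the same argument can be packaged model-theoretically through the completeness result for $\nel+\{\mathrm{(DI)}^{s}\}$: in any $\mathfrak{N}_{w}^{\mathrm{d}}$-model, an element $a$ with $a\pp\fa$ and $a^{*}\pp\fa$ forces $(a\otimes h(\psi)^{*})^{*}\pp\fa$ by condition~\eqref{dis}, hence $a\otimes(a\otimes h(\psi)^{*})^{*}\pp\fa$ by clause~(e) of Definition~\ref{def:nmod}, and then $h(\psi)\pp\fa$ by the validity of ($\dagger$) together with the soundness of \Mps.
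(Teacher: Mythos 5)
Your proof is correct, but it takes a genuinely different route from the paper's. The paper argues semantically: in any $\mathfrak{N}_{w}^{d}$-model it starts from the valid condition $(x\otimes(x\otimes y)^{*})\circ y\pp\tr$ (the semantic content of your auxiliary theorem), substitutes $((y\ra z)\otimes(z\ra y))^{*}$ for $y$, and uses \eqref{dis} together with Definition \ref{def:nmod}(e) to conclude $y=z$ for all $y,z$ — i.e.\ any model in which some $x$ satisfies both $x\pp\fa$ and $x\pp\tr$ is trivial, so condition \eqref{str} holds — and then the syntactic claim follows by completeness. You instead give a purely syntactic Hilbert-style derivation: you isolate the $\nel$-theorem $(\varphi\otimes(\varphi\otimes\psi^{*})^{*})\ra\psi$ via the (A6)-instance whose antecedent collapses to an (A1)-reflexivity plus the double-negation replacement (exactly the maneuver of Proposition \ref{prop:trivial}), and then detach against $\varphi\otimes(\varphi^{*}\oplus\psi)$ supplied by $\mathrm{(DI)}^{s}$, \Eqts\ and \Adjs. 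Both the instantiation of (A6) and the subsequent bookkeeping check out, and your substitution of $\psi^{*}$ for $y$ is more economical than the paper's, since it reaches $\psi$ directly instead of routing through triviality of the algebra. What your version buys: it is self-contained (no appeal to the completeness theorem for $\nel+\{\mathrm{(DI)}^{s}\}$) and it exhibits a reusable theorem schema — a disjunctive syllogism for the material disjunction $\oplus$ — which is independently interesting given that its analogue for the intensional arrow, namely $(\varphi\otimes(\varphi\ra\psi))\ra\psi$, is not derivable. What the paper's version buys: it records the stronger model-theoretic fact that every $\mathfrak{N}_{w}^{d}$-model satisfies \eqref{str}, which is the form in which the result is exploited in the surrounding discussion of $\nel^{\mathrm{ECQ}}$ and strong connexivity. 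Your closing semantic sketch is also correct and essentially reproduces the paper's argument with the shorter substitution.
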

\begin{proof}
Let $(\alga,\pp,\{\tr,\fa\})\in\mathfrak{N}_{w}^{d}$ and assume that $x\pp\fa$ and $x\pp\tr$, i.e. $x^{*}\pp\fa$. One has that $(x\otimes y)\pp(x\otimes y)^{*}$ entails that $(x\otimes y)\circ(x\otimes y)^{*}\pp\tr$ and so $(x\otimes(x\otimes y)^{*})\circ y\pp\tr$. Upon replacing $y$ by $((y\ra z)\otimes(z\ra y))^{*}$ in the previous expression, one has (a) $(x\otimes(x\otimes ((y\ra z)\otimes(z\ra y))^{*})^{*})\circ ((y\ra z)\otimes(z\ra y))^{*}\pp\tr$. Now, observe that $(x\otimes ((y\ra z)\otimes(z\ra y))^{*})^{*}=x^{*}\oplus((y\ra z)\otimes(z\ra y))$. Moreover, since $x^{*}\pp\fa$, one has that $x^{*}\oplus((y\ra z)\otimes(z\ra y))\pp\fa$. Since one has also $x\pp\fa$, we conclude $x\otimes(x\otimes ((y\ra z)\otimes(z\ra y))^{*})^{*}\pp\fa$. Finally, by (a), one has $x\otimes(x\otimes ((y\ra z)\otimes(z\ra y))^{*})^{*}\pp((y\ra z)\otimes(z\ra y))^{*}$ and so $((y\ra z)\otimes(z\ra y))^{*}\pp\tr$, i.e. $(y\ra z)\otimes(z\ra y)\pp\fa$. Consequently, $y=z$. Since $y$ and $z$ are arbitrary, we conclude that $\alga$ is trivial. So \eqref{str} holds. 
\end{proof}
As a consequence of the above considerations, it seems natural to assume as a possible ``comprehensive'' rendering of Nelson ideas, the logics $\nl+\{\mathrm{(DI)}\}$, $\nlas+\{\mathrm{(DI)}\}$, $\nel+\{\mathrm{(DI)}^{s}\}$ and $\nelas+\{\mathrm{(DI)}^{s}\}$, henceforth denoted by $\nl^{+},\nlas^{+},\nel^{+}$ and $\nelas^{+}$, respectively. In view of results from Section \ref{sec:strongsupconn}, $\nel^{+}$ and its variants are also strongly connexive, (weakly) superconnexive, and, in view of Proposition \ref{prop:df} and Proposition \ref{prop:df1}, in some sense \emph{strongly non-symmetric} and \emph{strongly non-hyperconnexive}.

We close this section with the following remark concerning the idempotency of $\otimes$. As inferred by \cite{MarPa}, Nelson regards conjunction as idempotent, since as he states in \cite{Nelson2}, ``$p\land p$ must be given no significance, either in operations or in inferences, beyond that possessed by $p$'', cf. \cite[p. 411]{MarPa}. Therefore, as the reader might wonder, adding the following pair of axioms might be still compatible with Nelson's perspective:
\[\mathrm{(Id1)}\ \varphi\otimes\varphi\ra\varphi\quad\quad \mathrm{(Id2)}\ \varphi\ra\varphi\otimes\varphi.\]
However, as we observe below, the extension of $\nl^{+}$ (and so of its superlogics) by means of these two axioms is far from being harmless. Indeed, let us consider the logic $\nl^{+}+\{\mathrm{(Id1)},\mathrm{(Id2)}\}$.  One has the following 
\begin{proposition}$\nl^{+}+\{\mathrm{(Id1)}\}$ is trivial.
\end{proposition}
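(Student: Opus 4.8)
The plan is to reduce to the completeness theorem and then exploit the explosion phenomenon already isolated in Proposition~\ref{prop:pizzy}. Since $\nl^{+}+\{\mathrm{(Id1)}\}=\nl+\{\mathrm{(DI)},\mathrm{(Id1)}\}$, by Theorem~\ref{thm:compl3}(3) it is enough to prove that every $\mathfrak{N}_{w}$-model $\mathcal{M}=(\alga,\pp,\{\tr,\fa\})$ satisfying both the condition \eqref{dis} (coming from $\mathrm{(DI)}$) and the condition $(x\otimes x)\ra x\pp\fa$, i.e. $x\otimes x\pp x^{*}$ (coming from $\mathrm{(Id1)}$; use Definition~\ref{def:nmod}(c),(d)), has a trivial underlying algebra: then $\models_{\mathfrak{N}_{w}^{\{\mathrm{(DI)},\mathrm{(Id1)}\}}}\psi$ for \emph{every} $\psi$, whence $\vdash_{\nl^{+}+\{\mathrm{(Id1)}\}}\psi$ for every $\psi$. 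Note that any such $\mathcal{M}$ lies in $\mathfrak{N}_{w}^{\mathrm{d}}$, so the computation in the proof of Proposition~\ref{prop:pizzy} will be available: \emph{any $\mathfrak{N}_{w}^{\mathrm{d}}$-model in which some element is incompatible with both $\tr$ and $\fa$ has a trivial underlying algebra.}

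First I would show that already $\nl+\{\mathrm{(Id1)}\}$ proves a formula $\gamma$ together with its negation $\gamma^{*}$. Instantiating (A6) with $\psi:=\varphi$ and $\chi:=\varphi$ gives $((\varphi\otimes\varphi)\ra\varphi)\ra((\varphi\otimes\varphi^{*})\ra\varphi^{*})$; the antecedent is $\mathrm{(Id1)}$, so by (MP), $\vdash(\varphi\otimes\varphi^{*})\ra\varphi^{*}$. Substituting $\varphi^{*}$ for $\varphi$ and then rewriting $\varphi^{**}$ as $\varphi$ (via (A3), the derivable $\varphi^{**}\ra\varphi$, and $\mathrm{(Eq)}_{1}$) and $\varphi^{*}\otimes\varphi$ as $\varphi\otimes\varphi^{*}$ (via commutativity of $\otimes$, which follows from \eqref{a5bis}, and $\mathrm{(Eq)}_{1}$) yields $\vdash(\varphi\otimes\varphi^{*})\ra\varphi$. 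Now I would feed this last formula into the $\nl$-provable instance $((\varphi\otimes\varphi^{*})\ra\varphi)\ra((\varphi\otimes\varphi^{*})\ra\varphi^{*})^{*}$ of Boethius' Thesis (BT1) and apply (MP), obtaining $\vdash((\varphi\otimes\varphi^{*})\ra\varphi^{*})^{*}$. Thus, with $\gamma:=(\varphi\otimes\varphi^{*})\ra\varphi^{*}$, both $\vdash_{\nl+\{\mathrm{(Id1)}\}}\gamma$ and $\vdash_{\nl+\{\mathrm{(Id1)}\}}\gamma^{*}$; \emph{a fortiori} the same holds in $\nl^{+}+\{\mathrm{(Id1)}\}$.

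To finish, let $\mathcal{M}$ be any model in $\mathfrak{N}_{w}^{\{\mathrm{(DI)},\mathrm{(Id1)}\}}$ and $h\in\mathbf{Hom}(\mathbf{Fm}_{\mathcal{L}},\alga)$. By soundness (the easy direction of Theorem~\ref{thm:compl3}(3)) we get $h(\gamma)\pp\fa$ and $h(\gamma^{*})=h(\gamma)^{*}\pp\fa$; by Definition~\ref{def:nmod}(d) the latter gives $h(\gamma)\pp\tr$. Hence $h(\gamma)$ is incompatible with both $\tr$ and $\fa$, and since $\mathcal{M}\in\mathfrak{N}_{w}^{\mathrm{d}}$ the argument of Proposition~\ref{prop:pizzy} forces $\alga$ to be trivial. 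As this applies to every model in $\mathfrak{N}_{w}^{\{\mathrm{(DI)},\mathrm{(Id1)}\}}$, the logic $\nl^{+}+\{\mathrm{(Id1)}\}$ proves every formula, i.e. it is trivial.

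I expect the only delicate point to be understanding \emph{why $\mathrm{(DI)}$ is genuinely required}. The derivation of $\gamma$ and $\gamma^{*}$ uses only $\nl$ and $\mathrm{(Id1)}$, but one cannot collapse $\nl+\{\mathrm{(Id1)}\}$ along the lines of Proposition~\ref{prop:trivial}: $\mathrm{(Id1)}$ only yields the ``diagonal'' instance $(\chi\otimes\chi)\ra\chi$ and not the general simplification $(\chi\otimes\xi)\ra\chi$ that produced the paradoxical formula $\varphi\otimes\varphi^{*}\ra\psi$ there, so a theorem together with its negation need not trivialize the weak calculus on its own. The step from ``$\vdash\gamma$ and $\vdash\gamma^{*}$'' to outright triviality is supplied precisely by the weak disjunction-introduction rule, via the explosion recorded in Proposition~\ref{prop:pizzy}: in the presence of \eqref{dis}, an element incompatible with both truth and falsehood collapses the model.
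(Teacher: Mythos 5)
Your proof is correct and follows essentially the same route as the paper's: instantiate (A6) with (Id1) to obtain $\varphi\otimes\varphi^{*}\ra\varphi^{*}$ and (by substitution and double negation) $\varphi\otimes\varphi^{*}\ra\varphi$, then use Boethius to get a theorem together with its negation and invoke the explosion isolated in Proposition~\ref{prop:pizzy}. The only differences are cosmetic --- you make the Boethius step explicit (the paper leaves it implicit) and you discharge the final collapse semantically via Theorem~\ref{thm:compl3}(3) and the model-triviality argument inside the proof of Proposition~\ref{prop:pizzy}, whereas the paper applies that proposition as a derived rule of $\nel^{+}$ and transfers back to $\nl^{+}$ via the coincidence of theorems.
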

\begin{proof}We proceed through a simple syntactical argument.
\begin{align*}
\mathrm{(1)} & \varphi\otimes\varphi\ra\varphi & \mathrm{(Id1)}\\
\mathrm{(2)} & (\varphi\otimes\varphi\ra\varphi)\ra(\varphi\otimes\varphi^{*}\ra\varphi^{*}) & \mathrm{(A6)}\\
\mathrm{(3)} & \varphi\otimes\varphi^{*}\ra\varphi^{*} & \mathrm{(MP)}
\end{align*}
So, one has $\vdash_{\nl^{+}}\varphi\otimes\varphi^{*}\ra\varphi^{*}$. Similarly, one proves $\vdash_{\nl^{+}}\varphi\otimes\varphi^{*}\ra\varphi$. Therefore, our conclusion follows from Proposition \ref{prop:pizzy} upon noticing that $\nl^{+}$ has the same theorems of $\nel^{+}$.
\end{proof}
The impossibility of assuming an idempotent conjunction on pain of ``harmful'' consequences is witnessed, whenever $\nlas$ (and so $\nelas$ or $\nelas^{+}$) is considered, also by the following fact.
\begin{proposition}For any $\varphi,\psi\in\mathrm{Fm}_{\mathcal{L}}$: 
\begin{equation}
\vdash_{\nlas}(\varphi\otimes\varphi)\circ(\psi\otimes\psi). \label{form:drawback}
\end{equation}
\end{proposition}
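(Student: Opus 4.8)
The plan is to prove the equivalent semantic statement $\models_{\mathfrak{N}}(\varphi\otimes\varphi)\circ(\psi\otimes\psi)$ and then invoke Theorem~\ref{thm:compl3}(2). The naive route through (A4) and $\mathrm{(MP)}$ is of no use here, since it would require $\vdash_{\nlas}(\varphi\otimes\varphi)\ra(\psi\otimes\psi)$, and this already fails in the $\mathfrak{N}$-model of Example~\ref{ex:1}. The observation that makes the proposition work is that, \emph{once associativity is available}, the term $(\varphi\otimes\varphi)\circ(\psi\otimes\psi)$ collapses onto an instance of Aristotle's Thesis.

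First I would record that in every $\mathfrak{N}$-model $\mathcal{M}=(\alga,\pp,\{\tr,\fa\})$ one has $d\circ d\pp\fa$ for every $d\in A$. This is just the validity, by soundness, of the instance $(\chi\ra\chi^{*})^{*}$ of Aristotle's Thesis — whose value under any $h\in\mathbf{Hom}(\mathbf{Fm}_{\mathcal{L}},\alga)$ is precisely $h(\chi)\circ h(\chi)$, since $(h(\chi)^{*})^{*}=h(\chi)$ — but it can also be read off directly from Definition~\ref{def:nmod}: by (a) $d\pp d^{*}$, hence $d\circ d^{*}\pp\tr$ by (c) and $(d\circ d^{*})^{*}\pp\fa$ by (d); applying (f) with $x=y=d$ gives $(d\circ d^{*})^{*}\pp(d\circ d)^{*}$, so (g) yields $(d\circ d)^{*}\pp\tr$, whence $d\circ d\pp\fa$ by (d) once more.

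Next I would check that the terms $(x\otimes x)\circ(y\otimes y)$ and $(x\otimes y)\circ(x\otimes y)$ induce the same operation on every $\mathcal{N}$-algebra, hence on the algebraic reduct of every $\mathfrak{N}$-model. Using commutativity of $\circ$, associativity and commutativity of $\otimes$, and the exchange identity \eqref{ex}, one has, for all $a,b$,
\[(a\otimes a)\circ(b\otimes b)=(b\otimes b)\circ(a\otimes a)=\bigl(b\otimes(a\otimes a)\bigr)\circ b=\bigl((a\otimes a)\otimes b\bigr)\circ b,\]
and likewise
\[(a\otimes b)\circ(a\otimes b)=\bigl(a\otimes(a\otimes b)\bigr)\circ b=\bigl((a\otimes a)\otimes b\bigr)\circ b,\]
so both terms equal $\bigl((a\otimes a)\otimes b\bigr)\circ b$. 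Combining the two facts, for any $\mathfrak{N}$-model $\mathcal{M}$ and any $h$, putting $c:=h(\varphi)\otimes h(\psi)$ we get $h\bigl((\varphi\otimes\varphi)\circ(\psi\otimes\psi)\bigr)=c\circ c\pp\fa$; hence $\mathcal{M}\models(\varphi\otimes\varphi)\circ(\psi\otimes\psi)$, and as $\mathcal{M}$ was arbitrary, $\models_{\mathfrak{N}}(\varphi\otimes\varphi)\circ(\psi\otimes\psi)$, so Theorem~\ref{thm:compl3}(2) gives $\vdash_{\nlas}(\varphi\otimes\varphi)\circ(\psi\otimes\psi)$.

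The only step with any content is the middle identity — recognising that associativity is exactly what folds $(\varphi\otimes\varphi)\circ(\psi\otimes\psi)$ into a self-compatibility term $c\circ c$; everything else is bookkeeping, and without associativity the reduction breaks down (which is why the statement is for $\nlas$). A purely syntactic proof is also available, should uniformity with the surrounding arguments be preferred: one starts from the theorem $\vdash_{\nlas}(\varphi\otimes\psi)\circ(\varphi\otimes\psi)$ (an instance of Aristotle's Thesis, after clearing double negations via (A3) and its converse) and rewrites it into $(\varphi\otimes\varphi)\circ(\psi\otimes\psi)$ by repeated applications of $\mathrm{(Eq)}_{1}$, each rewrite being licensed by the provable equivalences coming from (A2), \eqref{a5bis}, \eqref{as1}, \eqref{as2}, and the equivalence $\bigl((\mu\otimes\nu)\circ\rho\bigr)\Leftrightarrow\bigl((\mu\otimes\rho)\circ\nu\bigr)$ derivable from (A6).
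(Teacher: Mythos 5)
Your proof is correct and follows essentially the same route as the paper's: work semantically in an arbitrary $\mathfrak{N}$-model, use the exchange identity \eqref{ex} together with associativity and commutativity of $\otimes$ to rewrite $(a\otimes a)\circ(b\otimes b)$ as the self-compatibility term $(a\otimes b)\circ(a\otimes b)$, observe that $c\circ c\pp\fa$ always holds, and conclude by completeness. You merely spell out two steps the paper leaves implicit (the derivation of $c\circ c\pp\fa$ from Definition~\ref{def:nmod} and the intermediate form $((a\otimes a)\otimes b)\circ b$), which is fine.
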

\begin{proof}
Indeed, for any $\mathfrak{N}$-model $\mathcal{M}=(\alga,\pp,\{\fa,\tr\})$, and for any $a,b\in A$, one has $(a\otimes a)\circ(b\otimes b)=(a\otimes(b\otimes b))\circ a=((a\otimes b)\otimes b)\circ a=(a\otimes b)\circ(a\otimes b)\pp\fa$. 
\end{proof}
\noindent Unlike other theses provable in Nelsonian logics, the logical meaning of \eqref{form:drawback} is less clear. For example, although one has that $\not\vdash_{\nlas}\varphi\circ\varphi^{*}$, for any $\varphi\in\mathrm{Fm}_{\mathcal{L}}$, \eqref{form:drawback} yields that $\vdash_{\nlas}(\varphi\otimes\varphi)\circ(\varphi^{*}\otimes \varphi^{*})$. In fact, there is no evidence as to whether Nelson was indeed aware of this fact nor, if this is the case, if it was among the reasons why he never took a clear position in favor of the associativity of $\otimes$. However, this is a matter of historical investigation we will not go through further. 

\section{A further Nelsonian logic (?)}\label{sec:n1}
As recalled in the Introduction, Nelson rejects (see e.g. \cite[p. 448]{Nelson1}) what he calls the \emph{Principle of the Syllogism} \[((\varphi\ra\psi)\otimes(\psi\ra\chi))\ra(\chi\ra\psi)\tag{Syl}\label{sil}\] stating the transitivity of entailment. Indeed, assuming it would yield, together with other inference rules of $\nl$, instances of \eqref{simpl} that conflict with the Nelsonian idea that conjunctions, whenever taken in relationship with entailment, must function as a unit, and so ``producing'' something more than the mere aggregate of meanings of their arguments. Think e.g. to the formula \[(\varphi\ra\varphi)\otimes(\varphi\ra\psi)\ra(\varphi\ra\psi).\]
Regardless doubts concerning the tenability of Nelson's arguments against \eqref{sil} raised in the literature by D. J. Bronstein (see  \cite{Bronstein}, cf. \cite{MarPa,Nelson4}), we argue that some passages taken from Nelson's work might hint that some form of transitivity could still be considered as compatible with his intuitions. In \cite[pp. 447]{Nelson1}, one has ``entailment is an identity between a structural part (though not necessarily less than the whole) of the antecedent and the entire consequent. That is to say, an entailment represents a logical analysis''. Of course, as Nelson clarifies in \cite[pp. 272]{Nelson2}, such a relationship between the antecedent and the consequent of a sound entailment should not be read in ``spatial'' terms, but rather in the same sense that we mean whenever we say that the conclusion of a sound syllogism is contained into its premises. Now, if we \emph{assume} that $\varphi\ra\psi$ \emph{and} $\psi\ra\chi$, then $\varphi\ra\chi$ might not follows from premises due to reasons already recalled above. But if $\varphi\ra\psi$ and $\psi\ra\chi$ are \emph{true}, to the effect that the meaning of $\psi$ is contained into the meaning of $\varphi$ and the meaning of $\chi$ is contained into the meaning of $\psi$, then the meaning of $\chi$ must be arguably contained into the meaning of $\varphi$ as well. Therefore, we believe that the extension of $\nel^{+}$ ($\nl^{+},\nlas^{+}, \nelas^{+}$) by means of the  inference rule   
\[\varphi\ra\psi,\psi\ra\chi\vdash\varphi\ra\chi.\tag{$\mathrm{A8}$}\label{a8}\]
might still be worth of consideration from a Nelsonian perspective. 

In the sequel we will deal with the system $\mathsf{NeL}^{1}$ ($\nelas^{1}$) obtained upon adding (A8) to $\nel^{+}$ ($\nelas^{+}$). Beside being arguably still compatible with a Nelsonian perspective, the interest in such a system is motivated also by the fact that $\mathsf{NeL}^{1}$ has a relational-algebraic semantics with a rather smooth behavior, since, for any $\mathcal{M}=(\alga,\pp,\{\tr,\fa\})\in\mathfrak{N}_{w}^{d}$ ($\in\mathfrak{N}^{d}$) of $\nel^{1}$ ($\nelas^{1}$), $\pp$ now induces a \emph{partial order} over the universe $A$ of $\alga$. As it will be seen, this allows us to consider some properties of models which make it possible to establish a relationship between models of $\nel^{1}$ ($\nelas^{1}$) and structures arising from a seemingly far apart context, orthoposets (ortholattices).   

We start from the definition of the semantics of $\nel^{1}$ ($\nelas^{1}$). Let $\mathfrak{N}_{w}^{1}$ and $\mathfrak{N}^{1}$ be the classes of $\mathfrak{N}_{w}^{d}$-models and $\mathfrak{N}^{d}$-models, respectively, satisfying the further condition
\begin{equation}
x\pp y^{*} \text{ and } y\pp z^{*} \text{ imply } x\pp z^{*}  
\end{equation}
or, equivalently,
\[x\ra y\pp\fa\text{ and }y\ra z\pp\fa\text{ imply }x\ra z\pp\fa.\]
\begin{remark}It is worth noticing that, for any $\mathcal{M}=(\alga,\pp,\{\tr,\fa\})\in\mathfrak{N}_{w}$, $\mathcal{M}\in\mathfrak{N}_{w}^{1}$ iff, for any $a,b\in A$, it holds that $$a\pp b\text{ iff there is }c\in A:\ a\pp c\text{ and }b\pp c^{*}.$$ Therefore $\mathfrak{N}_{w}^{1}$ coincides with the class of models in which any two propositions $a,b$ are incompatible if and only they have as a part of their meaning contradictory propositions.
\end{remark}
\noindent By Theorem \ref{thm:compl2} we have the following result. 
\begin{theorem}\label{thm:compl1}For any $\Gamma\cup\{\varphi\}\subseteq\mathrm{Fm}_{\mathcal{L}}$, the following hold:
\begin{enumerate}
\item $\Gamma\vdash_{\nel^{1}}\varphi\text{ iff }\Gamma\models_{\mathfrak{N}_{w}^{1}}\varphi$;
\item $\Gamma\vdash_{\nelas^{1}}\varphi\text{ iff }\Gamma\models_{\mathfrak{N}^{1}}\varphi$; 
\end{enumerate}
\end{theorem}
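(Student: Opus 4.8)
The plan is to obtain Theorem~\ref{thm:compl1} as a direct instance of Theorem~\ref{thm:compl2}: no fresh Lindenbaum--Tarski construction is needed, and the only real work is to recognize $\nel^{1}$ and $\nelas^{1}$ as \emph{finite} extensions of $\nel$ in the sense of Section~\ref{sec:algsemnl}, and to check that the model classes $\mathfrak{N}_{w}^{\Theta}$ attached to them by the recipe preceding Theorem~\ref{thm:compl2} coincide with $\mathfrak{N}_{w}^{1}$ and $\mathfrak{N}^{1}$, respectively.

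First I would unfold the definitions. Since $\nel^{+}=\nel+\{\mathrm{(DI)}^{s}\}$ and $\nel^{1}=\nel^{+}+\{\eqref{a8}\}$, we have $\nel^{1}=\nel+\Theta$ with
\[\Theta=\{(\{x\},\,x\oplus y),\ (\{x\ra y,\ y\ra z\},\ x\ra z)\},\]
a finite set of rules whose closure under substitution is exactly the pair of schemas $\mathrm{(DI)}^{s}$ and \eqref{a8} (recall that logics are substitution-invariant). Likewise $\nelas^{1}=\nel+\Theta'$ with $\Theta'=\Theta\cup\{(\emptyset,(x\otimes y)\otimes z\ra x\otimes(y\otimes z)),\ (\emptyset,x\otimes(y\otimes z)\ra(x\otimes y)\otimes z)\}$. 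By Theorem~\ref{thm:compl2}, $\Gamma\vdash_{\nel^{1}}\varphi$ iff $\Gamma\models_{\mathfrak{N}_{w}^{\Theta}}\varphi$, and $\Gamma\vdash_{\nelas^{1}}\varphi$ iff $\Gamma\models_{\mathfrak{N}_{w}^{\Theta'}}\varphi$, so everything reduces to identifying $\mathfrak{N}_{w}^{\Theta}$ and $\mathfrak{N}_{w}^{\Theta'}$.

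The core computation concerns the rule \eqref{a8}. By the definition of $(\Gamma,\varphi)^{\pp}$, a $\mathfrak{N}_{w}$-model belongs to $\mathfrak{N}_{w}^{\{\eqref{a8}\}}$ iff $(x\ra y)\pp\fa$ and $(y\ra z)\pp\fa$ imply $(x\ra z)\pp\fa$ for all $x,y,z\in A$. Here one uses the identity ``$(u\ra v)\pp\fa$ iff $u\pp v^{*}$'', valid in every $\mathfrak{N}_{w}$-model: indeed $u\ra v=(u\circ v^{*})^{*}$, so by Definition~\ref{def:nmod}(d) one has $(u\ra v)\pp\fa$ iff $(u\circ v^{*})\pp\tr$, and by Definition~\ref{def:nmod}(c) the latter holds iff $u\pp v^{*}$. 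Substituting, the defining condition of $\mathfrak{N}_{w}^{\{\eqref{a8}\}}$ becomes precisely ``$x\pp y^{*}$ and $y\pp z^{*}$ imply $x\pp z^{*}$'', which is also, via the same identity, the equivalent reformulation displayed in the statement of Theorem~\ref{thm:compl1}. As for the remaining rules, the $\pp$-translation of $\mathrm{(DI)}^{s}$ is ``$x\pp\fa$ implies $x\oplus y\pp\fa$'', i.e.\ condition~\eqref{dis}, so $\mathfrak{N}_{w}^{\{\mathrm{(DI)}^{s}\}}=\mathfrak{N}_{w}^{d}$; and the two associativity axioms translate to the defining equations of $\mathfrak{N}$, whence $\mathfrak{N}_{w}^{\{\eqref{as1},\eqref{as2}\}}=\mathfrak{N}$, as already recorded in the excerpt. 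Intersecting these conditions, $\mathfrak{N}_{w}^{\Theta}$ is the class of $\mathfrak{N}_{w}^{d}$-models satisfying the transitivity condition, i.e.\ $\mathfrak{N}_{w}^{1}$, and $\mathfrak{N}_{w}^{\Theta'}$ is the class of $\mathfrak{N}^{d}$-models satisfying it, i.e.\ $\mathfrak{N}^{1}$. Feeding these identifications back into Theorem~\ref{thm:compl2} yields items (1) and (2).

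I do not anticipate a genuine obstacle: the argument is purely a matter of translating the two adjoined rules into their first-order $\pp$-conditions and simplifying them by clauses (c), (d) of Definition~\ref{def:nmod} together with involutivity of ${}^{*}$. The only step that needs a little care is precisely that simplification, showing that the seemingly $\ra$-laden condition coming from \eqref{a8} collapses to the compact transitivity clause defining $\mathfrak{N}_{w}^{1}$ and $\mathfrak{N}^{1}$; once this is checked, the theorem is immediate.
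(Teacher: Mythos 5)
Your proposal is correct and follows exactly the paper's route: the paper derives Theorem~\ref{thm:compl1} as an immediate instance of Theorem~\ref{thm:compl2}, treating $\nel^{1}$ and $\nelas^{1}$ as finite extensions of $\nel$ whose associated model classes are $\mathfrak{N}_{w}^{1}$ and $\mathfrak{N}^{1}$. The only difference is that you spell out the translation of $\mathrm{(DI)}^{s}$ and \eqref{a8} into their $\pp$-conditions via Definition~\ref{def:nmod}(c),(d), which the paper leaves implicit (it records the equivalent reformulation of the transitivity clause in the definition of $\mathfrak{N}_{w}^{1}$ itself).
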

\noindent It is important to observe that, adding (A8) still allows us to renounce to the stronger ``non-Nelsonian'' axiom 
\begin{equation}
((\varphi\ra\psi)\otimes(\psi\ra\chi))\ra(\varphi\ra\chi). \label{eq:nonnel}
\end{equation}
\begin{example}\label{ex.pia}The structure from Example \ref{ex.ceci} is also an $\mathfrak{N}_{w}^{1}$-model. Moreover, it can be seen that the homomorphism $h$ obtained upon extending the mapping assigning to distinct variables $x,y,z$ the element $a$, one has $h(((x\ra y)\otimes(y\ra z))\ra(x\ra z)) = (a\ra a)\otimes(a\ra a)\ra(a\ra a) = a\otimes a\ra a = c\ra a = b\not\pp\fa$.
\end{example}
\noindent Example \ref{ex.pia} naturally raises the question as to whether (A8) can be derived within $\nel^{+}$ or $\nelas^{+}$. Actually, one has the following remark. We call $(\alga,\pp,\{\tr,\fa\})\in\mathfrak{N}_{w}$ \emph{regular} provided that, for any $x,y\in A$, $x\ne y$ iff $x\not\Leftrightarrow y\pp\fa$.

\begin{remark}
Let $\mathcal{M}=(\alga,\pp,\{\tr,\fa\})\in\mathfrak{N}_{w}^{d}$ be regular. Then $\mathcal{M}\in\mathfrak{N}_{w}^{1}$. Indeed, suppose that $x\pp y^{*}$ and $y\pp z$. If $x = y$ or $y = z^{*}$ or $x=z^{*}$, then there is nothing to prove. If $x\neq y$, $y\ne z^{*}$, and $x\ne z^{*}$ then $x\not\Leftrightarrow y\pp\fa$,  $y\not\Leftrightarrow z^{*}\pp\fa$ and $x\not\Leftrightarrow z^{*}\pp\fa$. So we conclude that $((x\not\Leftrightarrow y)\otimes(y\not\Leftrightarrow z^{*}))\otimes(x\not\Leftrightarrow z^{*})\pp\fa$ which, in turn, entails that $(x\ra y)\ra((y\ra z^{*})\ra(x\ra z^{*}))\pp\fa$. The desired conclusion follows upon applying Definition \ref{def:nmod}(g). 
\end{remark}
In general, if we call \emph{full} a $\mathfrak{N}_{w}$-model $(\alga,\pp,\{\tr,\fa\})\in\mathfrak{N}_{w}^{S}$ provided that for any $x\in A$, either $x\pp\fa$ or $x\pp\tr$ one has the following easy but meaningful remark.
\begin{proposition}The following are equivalent:
\begin{enumerate}
\item $(\alga,\pp,\{\tr,\fa\})$ is a full $\mathfrak{N}_{w}$-model;
\item $(\alga,\pp,\{\tr,\fa\})$ is a full $\mathfrak{N}_{w}^{d}$-model;
\item $(\alga,\pp,\{\tr,\fa\})$ is a full $\mathfrak{N}_{w}^{1}$-model.
\end{enumerate}
\end{proposition}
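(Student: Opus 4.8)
The plan is to establish the cycle $(1)\Rightarrow(2)\Rightarrow(3)\Rightarrow(1)$. The implication $(3)\Rightarrow(1)$ is immediate: since $\mathfrak{N}_{w}^{1}\subseteq\mathfrak{N}_{w}$ and the adjective ``full'' only superimposes membership in $\mathfrak{N}_{w}^{S}$ (clause \eqref{str}) together with the totality requirement ``$x\pp\fa$ or $x\pp\tr$ for every $x\in A$'', both of which a full $\mathfrak{N}_{w}^{1}$-model already carries, a full $\mathfrak{N}_{w}^{1}$-model is in particular a full $\mathfrak{N}_{w}$-model. Hence the whole content lies in $(1)\Rightarrow(2)$ — a full $\mathfrak{N}_{w}$-model satisfies \eqref{dis} — and in $(2)\Rightarrow(3)$ — a full $\mathfrak{N}_{w}^{d}$-model additionally satisfies the weak transitivity ``$x\pp y^{*}$ and $y\pp z^{*}$ imply $x\pp z^{*}$''.

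For $(1)\Rightarrow(2)$ I would proceed as follows. Assume $x\pp\fa$; the goal is $x\oplus y=(x^{*}\otimes y^{*})^{*}\pp\fa$. By totality, applied to the element $x^{*}\otimes y^{*}$, there are two cases. If $x^{*}\otimes y^{*}\pp\tr$, then $(x^{*}\otimes y^{*})^{*}\pp\fa$ directly by Definition \ref{def:nmod}(d), so \eqref{dis} holds. If instead $x^{*}\otimes y^{*}\pp\fa$, then Definition \ref{def:nmod}(e) forces $x^{*}\pp\fa$, whence $x\pp\tr$ by Definition \ref{def:nmod}(d); now $x\pp\fa$ and $x\pp\tr$ together with \eqref{str} collapse the model to a trivial one, in which $x\oplus y\pp\fa$ holds anyway. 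So \eqref{dis} is verified in either case.

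For $(2)\Rightarrow(3)$ I would first note that, unwinding $\ra$ via Definition \ref{def:nmod}(c),(d), the desired implication is equivalent to ``$x\ra y\pp\fa$ and $y\ra z\pp\fa$ imply $x\ra z\pp\fa$''. If $\alga$ is trivial this is vacuous, so assume it is not; then by \eqref{str} no element lies both in $F_{\pp}$ and in $\{u:u\pp\tr\}$. The key lemma I would isolate is that in such a model $x\Leftrightarrow y\pp\fa$ holds if and only if $x=y$: Definition \ref{def:nmod}(e) splits $x\Leftrightarrow y\pp\fa$ into $x\ra y\pp\fa$ and $y\ra x\pp\fa$, i.e. into $x\pp y^{*}$ and $y\pp x^{*}$, whence $x=y$ by Definition \ref{def:nmod}(b); conversely $x\pp x^{*}$ (Definition \ref{def:nmod}(a)) gives $x\ra x\pp\fa$ via (c),(d), hence $x\Leftrightarrow x\pp\fa$ by (e). Combined with totality and non-triviality this upgrades to $x\nlra y\pp\fa$ iff $x\ne y$. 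Consequently, when $x,y,z$ are pairwise distinct, two applications of Definition \ref{def:nmod}(e) show the ``non-equivalence guard'' $x\nlra y\nlra z\pp\fa$; feeding this into Definition \ref{def:nmod}(h) and then applying (g) and (d) yields that the syllogism formula $(x\ra y)\ra((y\ra z)\ra(x\ra z))$ lies in $F_{\pp}$, after which two rounds of model-level modus ponens (from $a\ra b\pp\fa$ and $a\pp\fa$ conclude $b\pp\fa$, using (c),(d),(g)) deliver $x\ra z\pp\fa$. If instead two of $x,y,z$ coincide the conclusion is immediate — $x=y$ makes the target identical to the second hypothesis, $y=z$ to the first, and $x=z$ reduces to $x\ra x\pp\fa$.

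The one genuinely delicate point, I expect, is this transitivity step: (A7), equivalently Definition \ref{def:nmod}(h), secures the Principle of the Syllogism only under the guard $x\nlra y\nlra z$, so everything hinges on the observation that in a \emph{full} model this guard is automatically met as soon as $x,y,z$ are distinct, the degenerate cases being dispatched trivially. The remaining steps — the totality case-split in $(1)\Rightarrow(2)$ and the bookkeeping of the modus-ponens chain — should be routine.
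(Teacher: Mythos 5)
Your proof is correct and follows essentially the same route as the paper's: the cycle $(1)\Rightarrow(2)\Rightarrow(3)\Rightarrow(1)$, with $(1)\Rightarrow(2)$ handled by the totality case-split on whether $x^{*}\otimes y^{*}$ is incompatible with $\tr$ or with $\fa$, and $(2)\Rightarrow(3)$ handled by showing a full model is regular (your ``key lemma'' that $x\Leftrightarrow y\pp\fa$ iff $x=y$, upgraded by totality) and then invoking the guarded syllogism via Definition~\ref{def:nmod}(h) --- which is exactly the paper's appeal to its earlier remark that regular $\mathfrak{N}_{w}^{d}$-models lie in $\mathfrak{N}_{w}^{1}$, here unpacked inline. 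Your treatment of the case $x\pp\fa$ and $x\pp\tr$ (collapse via \eqref{str} rather than outright impossibility) is in fact slightly more careful than the paper's wording.
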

\begin{proof}(1) implies (2). Note that if $x\pp\fa$, then one must have $x\oplus y\pp\fa$, otherwise $x\oplus y\pp\tr$ entails that $x^{*}\otimes y^{*}\pp\fa$ and so $x^{*}\pp\fa$, i.e. $x\pp\tr$ which is impossible. (2) entails (3), since it is easily seen that any full $\mathcal{N}_{w}^{d}$-model is also regular. That (3) implies (1) is clear.
\end{proof}
Note that, unfortunately, not any $\mathfrak{N}_{w}^{1}$-model is full. The structure from Example \ref{ex.ceci} is a case in point. Actually, $\mathfrak{N}_{w}^{1}$-models need not be even regular, as witnessed by the following example. 
\begin{example}Let us consider the following structure $(\alga=(\{a,b,c,d,e\},\circ,\otimes,{}^{*}),\pp,\{\tr,\fa\})$ such that $\alga$'s operations are specified according to the following Cayley tables
\begin{center}
\begin{tabular}{c| c c c c c c } 
$\circ$ & a & b & c & d & e & f \\ 
\hline  
a& a & a & c & d & d & a \\ 
b& a & a & c & c & d & a \\
c& c & c & a & a & a & c \\
d& d & c & a & a & a & c \\
e& d & d & a & a & a & c \\
f& a & a & c & c & c & a \\
\hline 
\end{tabular}\quad\quad
\begin{tabular}{c| c c c c c c } 
$\otimes$ & a & b & c & d & e & f\\ 
\hline  
a&  e &  d & f& b& a& c\\ 
b&  d &  c & f& f& b& c\\
c&  f &  f & c& c& c& f\\
d&  b &  f & c& c& d& f\\
e&  a &  b & c& d& e& f\\
f&  c &  c & f& f& f& c\\
\hline 
\end{tabular}\quad\quad
\begin{tabular}{c| c } $\ast$ &  \\ 
\hline  
a &  c\\ 
b &  d\\
c &  a\\
d &  b\\
e &  f\\
f &  e\\
\hline 
\end{tabular}
\end{center}
while $\pp=\{(a,\fa),(e,\fa),(c,\tr),(f,\tr),(a,c),(b,c),(b,d),(c,a),(c,b),(c,f),(d,b),(d,f),(e,f),\\ (f,c),(f,d),(f,e)\}$. It can be seen that $\mathcal{M}$ is an $\mathfrak{N}_{w}^{1}$-model. However, one has that $a\ne b$ but $a\not\Leftrightarrow b=((a\ra b)\otimes(b\ra a))^{*}=(b\otimes a)^{*}=d^{*}=b\not\pp\fa$.
\end{example}

However, we believe the following problem is worth of some attention
\begin{question}
Characterize the least extension of $\nel^{+}$ which is sound and complete w.r.t. a class of non trivial full $\mathfrak{N}_{w}^{1}$-models. 
\end{question}
\noindent Actually, we have not yet succeeded to prove or disprove that (A8) is actually  derivable in $\nel^{+}$ or in $\nelas^{+}$. Therefore, we leave this as an open problem whose in-depth analysis is postponed to  future investigation. 
\begin{question}Prove or disprove that (A8) is derivable in $\nel^{+}$ or in $\nelas^{+}$.
\end{question}
In the sequel we show that $\mathfrak{N}_{w}^{1}$-models yield partially ordered sets endowed with a unary operation which are well known in the realm of an apparently far apart research field: quantum logic.  
  
\begin{definition}\label{def:bpai}A \emph{poset with antitone involution} is a first order structure $\mathbf{P}=(P,\leq,{}^{*})$ such that $(P,\leq)$ is a poset, and ${}^{*}: P\to P$ is an antitone involution, i.e. it is a mapping satisfying the following conditions:
\[x^{**}\approx x,\text{ and }x\leq y\ \text{implies}\ y^{*}\leq x^{*}.\]
Whenever $x\leq y^{*}$, $x$ and $y$ are said to be \emph{orthogonal} ($xOy$, in brief).
\end{definition}
\noindent A poset with antitone involution $\mathbf{P}=(P,\leq,{}^{*})$ is said to be \emph{bounded} provided it has a least and a greatest element. As customary, whenever a poset with antitone involution $\mathbf{P}=(P,\leq,{}^{*})$ is bounded, we will denote its least and greatest elements by $0$ and $1$, respectively, and include them in the signature of $\mathbf{P}$ as constants, i.e. we set $\mathbf{P}=(P,\leq,{}^{*},0,1)$. Moreover, if $\mathbf{P}$ is lattice-ordered, we will regard it as an algebra $(P,\land,\lor,{}^{*},0,1)$ of type $(2,2,1,0,0)$ where $\land$ and $\lor$ denote lattice operations.

Let $\mathbf{P}=(P,\leq,{}^{*})$ and $\mathbf{Q}=(Q,\leq,{}^{*})$ be posets with antitone involution. $\mathbf{Q}$ is said to be a \emph{subposet with antitone involution} of $\mathbf{P}$ provided that $Q\subseteq P$ and $x^{{*}_{\mathbf{Q}}}=x^{{*}_{\mathbf{P}}}$, for any $x\in Q$. An \emph{orthohomomorphism} from $\algp$ to $\mathbf{Q}$ is a map $h:P\to Q$ such that, for any $x,y\in P$, it holds that $x\leq y$ implies $h(x)\leq h(y)$ and $h(x^{*})=h(x)^{*}$. If, moreover, $h$ is also injective (injective and surjective) then it will be called an orthoembedding (orthoisomorphism).

Let $\mathbf{P}=(P,\leq)$ be a poset. Using a customary notation, for any $X\subseteq P$, we set
\[L(X):=\{y:\ y\leq x,\text{ for any }x\in X\}\text{ and }U(X):=\{y:x\leq y,\text{ for any }x\in X\}.\]
Whenever $X=\{x_{1},\dots,x_{n}\}$, for some $n\geq 1$, we will write $L(x_{1},\dots,x_{n})$ and $U(x_{1},\dots,x_{n})$ in place of $L(\{x_{1},\dots,x_{n}\})$ and $U(\{x_{1},\dots,x_{n}\})$, respectively.

The next definition provides a non-necessarily lattice-ordered generalization of the concept of an \emph{ortholattice} introduced for the first time by G. Birkhoff and J. von Neumann in their algebraic inquiries into the logico-algebraic foundation of Quantum Theory, see \cite{BV36}.
\begin{definition}A bounded poset with antitone involution $\mathbf{P}=(P,\leq,{}^{*},0,1)$ is said to be an \emph{orthoposet} provided that the following condition holds:
\[L(x,x^{*}) =\{0\}.\]
In this case, ${}^{*}$ is called an \emph{orthocomplementation}. Moreover, if $\leq$ is a lattice-order, then $\algp$ will be called an \emph{ortholattice}. 
\end{definition}
The next remark shows that any $(\alga,\pp,\{\tr,\fa\})\in\mathfrak{N}_{w}^{1}$ yields a poset with antitone involution to the effect that $\pp$ can be regarded as an orthogonality relation. 
\begin{remark}\label{rem:giambattista}
If $(\alga,\pp,\{\tr,\fa\})\in\mathfrak{N}_{w}^{1}$, then upon defining $\precsim\ \subseteq A^{2}$ by setting, for any $x,y\in A$,  
\[x\precsim y\text{ iff }x\pp y^{*},\] one obtains that the structure $\mathcal{I}^{\mathcal{M}}(\alga)=(A,\precsim,{}^{*})$ is a poset with antitone involution. Indeed, by (i), $\precsim$ is transitive. Antisymmetry follows by Definition  \ref{def:nmod}(b), while antitonicity of ${}^{*}$ is a direct consequence of the symmetry of $\pp$, as $x\precsim y$ iff  $x\pp y^{*}$ iff $x\circ y^{*}\pp\tr$ iff $y^{*}\circ x\pp\tr$ iff $y^{*}\pp x=x^{**}$ iff $y^{*}\precsim x^{*}$. Moreover, one obviously has that \[x\pp y\text{ iff }x\precsim y^{*}.\] We remark that $\mathcal{I}^{\mathcal{M}}(\alga)$ need not be a lattice, nor bounded. Indeed, upon considering the structure $\mathcal{M}=(\alga=(\{a,b,c,d,e,f\},\otimes,\circ,{}^{*}),\pp,\{\tr,\fa\})$ from Example \ref{ex.ceci}, one has that $\mathcal{I}^{\mathcal{M}}(\alga)$ has the shape depicted in Figure \ref{fig:benzenel}.
\begin{figure}
\centerline{\xymatrix{
a = b^{*} \ar@{-}[d]& & c=d^{*}\ar@{-}[d]\\
f=e^{*} \ar@{-}[d]& & e=f^{*}\ar@{-}[d]\\
d=c^{*} & & b=a^{*}
}}
\caption{The poset $\mathcal{I}^{\mathcal{M}}(\alga)$ from Example \ref{ex.ceci}}
\label{fig:benzenel}
\end{figure}
\end{remark}
\noindent Whenever the reference to the model will be clear from the sequel, we will simply write $\mathcal{I}(\alga)$ in place of $\mathcal{I}^{\mathcal{M}}(\alga)$.

Next, in order to speed up computations, we show that, for any $(\alga,\pp,\{\tr,\fa\})\in\mathfrak{N}_{w}^{1}$, once $\alga$ is ordered by $\precsim$, it yields a partially ordered commutative involutive residuated groupoid. Cf. e.g. \cite{Ch3,MetPaTsi}.
\begin{definition}\emph{A partially ordered commutative involutive  residuated groupoid} is a partially ordered algebra $\alga=(A,\leq,\supset,\otimes,{}^{*})$ of type $(2,2,1)$ such that:
\begin{enumerate}
\item $(A,\otimes)$ is a commutative groupoid;
\item $(A,\leq,{}^{*})$ is a poset with antitone involution;
\item For any $x,y,z\in A$:
\[x\otimes y\leq z\text{ iff }x\leq y\supset z.\]
\end{enumerate}
\end{definition}
\begin{remark}For any partially ordered commutative involutive residuated groupoid $\alga=(A,\leq,\supset,\otimes,{}^{*})$, one has that $x\leq y$ implies $x\otimes z\leq y\otimes z$ and $x\oplus z\leq y\oplus z$.
\end{remark}
For any $(\alga,\pp,\{\tr,\fa\})\in\mathfrak{N}_{w}^{1}$, we set 
\[x\supset y:=x^{*}\oplus y.\]
Clearly, $x\supset y$ might be regarded as a kind of material implication. 
\begin{proposition}\label{lem:partordsemgroup}Let $(\alga,\pp,\{\tr,\fa\})\in\mathfrak{N}_{w}^{1}$. For any $x,y,z\in A$:
\begin{equation}
x\otimes y\precsim z\text{ iff }x\precsim y\supset z.
\end{equation}
Consequently, the structure $(A,\precsim,\otimes,\supset,{}^{*})$ is an involutive commutative partially ordered groupoid.
\end{proposition}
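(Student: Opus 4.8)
The plan is to reduce the residuation biconditional to a single equational identity in the weak $\mathcal N$-algebra $\alga$ and then read everything off condition (c) of Definition \ref{def:nmod}. First I would unfold the relevant definitions. Since $x\supset y = x^{*}\oplus y = (x\otimes y^{*})^{*}$ and, by Remark \ref{rem:giambattista}, $x\precsim w$ abbreviates $x\pp w^{*}$, the claim $x\otimes y\precsim z\Leftrightarrow x\precsim y\supset z$ becomes
\[(x\otimes y)\pp z^{*}\quad\Longleftrightarrow\quad x\pp (y\otimes z^{*}),\]
where I used involutivity to compute $(y\supset z)^{*}=(y\otimes z^{*})^{**}=y\otimes z^{*}$.

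Next, by Definition \ref{def:nmod}(c) the two sides are equivalent, respectively, to $(x\otimes y)\circ z^{*}\pp\tr$ and $x\circ(y\otimes z^{*})\pp\tr$, so it suffices to establish the purely algebraic identity $(x\otimes y)\circ z^{*}\approx x\circ(y\otimes z^{*})$. This follows from the weak $\mathcal N$-algebra axioms alone: by commutativity of $\otimes$ one has $(x\otimes y)\circ z^{*}=(y\otimes x)\circ z^{*}$; by \eqref{ex} one has $(y\otimes x)\circ z^{*}=(y\otimes z^{*})\circ x$; and by commutativity of $\circ$ one has $(y\otimes z^{*})\circ x=x\circ(y\otimes z^{*})$. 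Chaining these three equalities yields the identity, hence the biconditional above, hence the displayed residuation law.

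For the ``consequently'' clause I would simply assemble the pieces required by the definition of a partially ordered commutative involutive residuated groupoid: $(A,\otimes)$ is a commutative groupoid by the definition of a weak $\mathcal N$-algebra; $(A,\precsim,{}^{*})$ is a poset with antitone involution by Remark \ref{rem:giambattista}; $\supset$ is a genuine binary operation on $A$, being term-definable from $\otimes$ and ${}^{*}$; and the residuation clause is exactly the equivalence just proved. These are precisely conditions (1)--(3) of the definition.

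The only delicate point — and the reason the argument is not entirely mechanical — is that $\otimes$ need not be associative in a weak $\mathcal N$-algebra, so the key identity $(x\otimes y)\circ z^{*}\approx x\circ(y\otimes z^{*})$ cannot be obtained by any re-bracketing and must instead be routed through \eqref{ex} together with the two commutativity laws; one has to be careful to invoke \eqref{ex} in exactly the form $(a\otimes b)\circ c\approx(a\otimes c)\circ b$ with the variables matched ($a=y$, $b=x$, $c=z^{*}$) so that the correct $\otimes$-factor is shuffled past the $\circ$.
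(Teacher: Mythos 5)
Your proposal is correct and follows essentially the same route as the paper: both reduce each side of the biconditional to an incompatibility with $\tr$ via Definition \ref{def:nmod}(c), and then identify the two resulting terms by chaining commutativity of $\otimes$, the exchange law \eqref{ex}, and commutativity of $\circ$, together with the computation $(y\supset z)^{*}=y\otimes z^{*}$ from involutivity. The paper states the chain more compactly (and leaves the ``consequently'' clause implicit), but there is no substantive difference in the argument.
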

\begin{proof}
Note that, for any $x,y,z\in A$, one has $x\otimes y\precsim z$ iff $x\otimes y\pp z^{*}$ iff $(x\otimes y)\circ z^{*}\pp\tr$ iff $(z^{*}\otimes y)\circ x\pp\tr$ iff $x\pp(z^{*}\otimes y)=(z^{*}\otimes y^{**})^{**}=(y^{*}\oplus z)^{*}=(y\supset z)^{*}$ iff $x\precsim y\supset z$.
\end{proof}

Let $(\alga,\pp,\{\tr,\fa\})\in\mathfrak{N}_{w}^{1}$. For any $X\subseteq A$, we set \[X^{\pp}:=\{y\in A: x\pp y,\text{ for any }x\in X\}.\]
It is well known that, given a set $A\ne\emptyset$ and a symmetric and irreflexive binary relation $\pp\ \subseteq A^{2}$, the mapping ${}^{\pp\pp}:\mathcal{P}(A)\to\mathcal{P}(A)$ such that, for any $X\subseteq A$, $X\mapsto X^{\pp\pp}$ is a closure operator (see e.g. \cite[Theorem 4]{Fazio1}). Moreover, the set ${C}(\mathcal{M})=\{X\subseteq A: X^{\pp\pp}=X\}$ forms a complete ortholattice $\mathscr{O}^{\mathcal{M}}=({C}(\mathcal{M}),\land,\lor,{}^{\ast},\emptyset,A)$ with $\emptyset$ and $A$ as its bottom and top element, respectively, and operations $$X\land Y=X\cap Y,\quad X\lor Y=(X\cup Y)^{\pp\pp},\text{ and }X^{*}=X^{\pp}.$$ We call $X\in{C}(\mathcal{M})$ \emph{closed}. In particular, note that $X^{\pp\pp\pp}=X^{\pp}$, $\emptyset^{\pp}=A$ and $A^{\pp}=\emptyset$. See e.g. \cite{Fazio1} for details.

\begin{remark}\label{rem:asd}For any $\mathfrak{N}_{w}^{1}$-model $\mathcal{M}=(\alga,\pp,\{\tr,\fa\})$, $\{x\}^{\pp\pp}=\{x^{*}\}^{\pp}$. To see this, note that $x\pp x^{*}$ entails that $x\in \{x^{*}\}^{\pp}$, and so $\{x\}^{\pp\pp}\subseteq\{x^{*}\}^{\pp}$ and $\{x^{*}\}^{\pp\pp}\subseteq\{x\}^{\pp}$. Conversely, let $y\in \{x\}^{\pp}$ and $z\in \{x^{*}\}^{\pp}$. One has $y\pp x$, $x^{*}\pp z$ and so, by $\mathrm{(i)}$, $y\pp z$. Since $z$ is arbitrary, $y\in\{x^{*}\}^{\pp\pp}$.  
\end{remark}
An obvious observation is that, for any $\mathcal{M}=(\alga,\pp,\{\fa,\tr\})\in\mathfrak{N}_{w}$, $\mathcal{I}(\alga)$ and $\mathscr{O}^{\mathcal{M}}$ can be regarded as first-order structures in a signature containing a binary relation ($\precsim$ and $\subseteq$, respectively), and a unary function (${}^{*}$ and ${}^{\pp}$, respectively).
\begin{proposition}\label{prop:ortholat}Let $\mathcal{M}=(\alga,\pp,\{\fa,\tr\})\in\mathfrak{N}_{w}$. Then $\mathcal{M}\in\mathfrak{N}_{w}^{1}$ if and only if the mapping $h:A\to{C}(\mathcal{M})$ such that $h:x\mapsto\{x\}^{\pp\pp}$ induces a first-order embedding between $\mathcal{I}(\alga)$ and $\mathscr{O}^{\mathcal{M}}$, once regarded as first-order structures in the same signature.
\end{proposition}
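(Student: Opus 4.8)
The plan is to unwind ``$h$ induces a first-order embedding of $\mathcal{I}(\alga)$ into $\mathscr{O}^{\mathcal{M}}$'' and match it, clause by clause, against the defining extra condition (i) of $\mathfrak{N}_{w}^{1}$. First one checks that for every $\mathcal{M}\in\mathfrak{N}_{w}$ the assignment $x\mapsto\{x\}^{\pp\pp}$ indeed lands in $C(\mathcal{M})$, since $\{x\}^{\pp\pp}$ is a fixed point of the closure operator ${}^{\pp\pp}$. Regarding $\mathcal{I}(\alga)=(A,\precsim,{}^{*})$ and $\mathscr{O}^{\mathcal{M}}=(C(\mathcal{M}),\subseteq,{}^{\pp})$ as structures in the common signature (one binary relation, one unary function), ``$h$ is a first-order embedding'' unpacks into the conjunction of: (E1) $h$ is injective; (E2) $x\precsim y$ iff $h(x)\subseteq h(y)$; (E3) $h(x^{*})=h(x)^{\pp}$. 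Thus the proposition reduces to showing that (E1)--(E3) jointly hold if and only if $\mathcal{M}$ satisfies condition (i) (here $\mathfrak{N}_{w}^{1}$ is read, as in the neighbouring structural remarks, as an $\mathfrak{N}_{w}$-model satisfying condition (i); the $\mathfrak{N}_{w}^{d}$ clause plays no role in the statement).

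For the implication from condition (i) to (E1)--(E3), the workhorse is Remark \ref{rem:asd}, which — using exactly (i) — gives $\{y\}^{\pp\pp}=\{y^{*}\}^{\pp}$ for all $y$. Then (E3) is immediate, since $h(x^{*})=\{x^{*}\}^{\pp\pp}=\{x\}^{\pp}$ while $h(x)^{\pp}=\{x\}^{\pp\pp\pp}=\{x\}^{\pp}$. For (E2) one notes that $x\precsim y$ means $x\pp y^{*}$, i.e. $x\in\{y^{*}\}^{\pp}=\{y\}^{\pp\pp}=h(y)$; and since $h(y)$ is closed, ``$x\in h(y)$'' is equivalent to ``$h(x)=\{x\}^{\pp\pp}\subseteq h(y)$'', which is both directions of (E2) at a stroke. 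Finally (E1): if $h(x)=h(y)$, then by (E2) both $x\precsim y$ and $y\precsim x$, i.e. $x\pp y^{*}$ and $y\pp x^{*}$, whence $x=y$ by Definition \ref{def:nmod}(b).

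Conversely, if $h$ is a first-order embedding, condition (i) falls out directly: from $x\pp y^{*}$ and $y\pp z^{*}$, i.e. $x\precsim y$ and $y\precsim z$, the preservation half of (E2) gives $h(x)\subseteq h(y)\subseteq h(z)$, and the reflection half gives $x\precsim z$, i.e. $x\pp z^{*}$ — in other words, a first-order embedding forces $\precsim$ to be transitive because $\subseteq$ is, and transitivity of $\precsim$ is precisely (i). The step I expect to carry essentially all the weight is the forward direction, and within it the single non-bookkeeping ingredient is Remark \ref{rem:asd}: once the identity $\{y\}^{\pp\pp}=\{y^{*}\}^{\pp}$ is available, (E2) and (E3) collapse to the elementary facts that $\{x\}^{\pp\pp}$ is the least closed set containing $x$ and that ${}^{\pp\pp\pp}={}^{\pp}$, and (E1) is just Definition \ref{def:nmod}(b). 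The only genuine pitfall is conceptual rather than computational: one must use ``first-order embedding'' in the strong sense — that it \emph{reflects}, not merely preserves, the order (and commutes with the involution) — since it is precisely the reflection half of (E2) that the converse exploits, whereas an injective orthohomomorphism would not suffice.
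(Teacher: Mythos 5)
Your proposal is correct and follows essentially the same route as the paper's proof: the forward direction rests on Remark \ref{rem:asd} (i.e.\ $\{y\}^{\pp\pp}=\{y^{*}\}^{\pp}$) to get preservation and reflection of $\precsim$ and compatibility with the involution, and the converse extracts condition (i) from the transitivity of $\subseteq$ via the reflection half of the embedding. Your explicit verification of injectivity via Definition \ref{def:nmod}(b), and your observation that the $\mathfrak{N}_{w}^{d}$ clause must be set aside for the equivalence to be literally an ``iff'', are both sound and merely make explicit what the paper leaves implicit.
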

\begin{proof}
Concerning the right-to-left direction, observe that, since $\mathcal{I}(\alga)$ is embeddable  into $\mathscr{O}^{\mathcal{M}}$ via the map $h$, one has that $x\pp y^{*}$ iff $\{x\}^{\pp\pp}\subseteq\{y\}^{\pp\pp}$. Consequently, $x\pp y^{*}$ and $y\pp z^{*}$ imply $\{x\}^{\pp\pp}\subseteq\{y\}^{\pp\pp}$ and $\{y\}^{\pp\pp}\subseteq\{z\}^{\pp\pp}$ and so $\{x\}^{\pp\pp}\subseteq\{z\}^{\pp\pp}$ which, in turn, entails that $x\pp z^{*}$. As regards the converse direction, one has $x\precsim y$ implies $x\in\{y^{*}\}^{\pp}$. So $\{x\}^{\pp\pp}\subseteq\{y\}^{\pp\pp}$ (Remark \ref{rem:asd}). The converse follows upon noticing that $x\in\{x\}^{\pp\pp}$ and $y^{*}\in\{y\}^{\pp}$. That $h(x^{*})=h(x)^{\pp}$ follows again by Remark \ref{rem:asd}. 
\end{proof}
\noindent By Proposition \ref{prop:ortholat}, for any $\mathfrak{N}_{w}^{1}$-model $(\alga,\pp,\{\tr,\fa\})$, $\mathcal{I}(\alga)=(A,\precsim,{}^{*})$ can be orthoembedded into the complete ortholattice $C(\mathcal{M})$. Now, it can be seen that $\mathscr{O}^{\mathcal{M}}$ is the Dedekind-MacNeille completion (DM-completion) of $\mathcal{I}(\alga)$. Indeed, it is orthoisomorphic to the complete (bounded) lattice with antitone involution $\mathbb{DM}(\mathcal{I}(\alga))=(\mathrm{DM}(\mathcal{I}(\alga)),\land,\lor,{}^{*},\emptyset,A)$, where $\mathrm{DM}(\mathcal{I}(\alga))=\{X\subseteq A:L(U(X))=X\}$ and, for any $X,Y\in \mathrm{DM}(\mathcal{I}(\alga))$, $X\land Y = X\cap Y$, $X\lor Y = L(U(X\cup Y))$, and $X^{*}= L(X')$, where $X'=\{x^{*}:x\in X\}$. This construction turns out to be particularly interesting since any poset with antitone involution $\algp$  is orthoembeddable into its Dedekind-MacNeille completion $\mathbb{DM}(\algp)$ in such a way that existing joins and meets are always preserved. Moreover, any element of $\mathbb{DM}(\algp)$ is the join and the meet of (images of) elements of $\algp$, namely, for any $X\in \mathrm{DM}(\algp)$: 
\[X={\bigvee_{c\in C}}^{\mathbb{DM}(\algp)}c={\bigwedge_{d\in D}}^{\mathbb{DM}(\algp)}d,\] for some $C,D\subseteq P$. Note that, with a slight notational abuse, we identify elements of $\algp$ with their image in $\mathbb{DM}(\algp)$. 
$\mathbb{DM}(\algp)$ is, up to isomorphism, the least join and meet completion of $\algp$. See \cite{Fazio1} for details.\\ 
It is well known that any poset with antitone involution (orthoposet) admits a DM-completion which is always a complete bounded lattice with antitone involution (ortholattice). However, over the past years, much effort has been devoted to characterize posets with antitone involution, in particular orthoposets, whose DM-completion is not only an ortholattice but also an orthomodular lattice, the latter being a concept of prominent importance for quantum logic, the foundation of Quantum Mechanics, and, in general, for algebraic investigations of logic. See \cite{Beran,DallaChiara} for a thorough treatment of the topic.
\begin{definition}An ortholattice  $\alga=(A,\land,\lor,{}^{*},0,1)$ is said to be \emph{orthomodular} provided that  it satisfies the condition \[x\leq y\text{ and }x^{*}\land y = 0\text{ imply }x = y.\]
 
\end{definition}
Now, it naturally raises the question if there is at least an $\mathfrak{N}_{w}^{1}$-model $(\alga,\pp,\{\tr,\fa\})$ such that $C(\mathcal{M})$ is not only an otholattice, but also an \emph{orthomodular lattice}. Unfortunately, the answer is negative due to the next proposition.
\begin{proposition}\label{prop:d11}For any $(\alga,\pp,\{\tr,\fa\})\in\mathfrak{N}_{w}^{1}$, $\mathcal{I}(\alga)$ is such that, for any $x,y\in A$:
\[x\precsim y\text{ implies that, for any }z\in A,\ z\not\precsim x^{*}\text{ or }z\not\precsim y.\]
\end{proposition}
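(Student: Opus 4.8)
The plan is to argue by contradiction and reduce everything to triviality of $\alga$. Fix a non-trivial $\mathfrak{N}_{w}^{1}$-model $\mathcal{M}=(\alga,\pp,\{\tr,\fa\})$ (the trivial case is of no interest for what follows). First I would record that $\pp$ is then irreflexive: since $\mathfrak{N}_{w}^{1}\subseteq\mathfrak{N}_{w}^{d}$, Proposition \ref{prop:pizzy} guarantees that $\mathcal{M}$ satisfies \eqref{str}; combining this with Definition \ref{def:nmod}(c) and Remark \ref{rem:prop1} (which yields $a\circ a\pp\fa$ for every $a$, taking $x:=a$, $y:=a^{*}$), one sees that if $a\pp a$ held for some $a$ then $a\circ a\pp\tr$ as well, so \eqref{str} would force $A\subseteq F_{\pp}$, hence $a\ra b\pp\fa$ for all $a,b$ and thus triviality by antisymmetry. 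So on a non-trivial model $a\not\pp a$, i.e.\ $a\not\precsim a^{*}$, for every $a\in A$.

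Next I would unwind the statement. Assume, for contradiction, that $x\precsim y$ but that some $z\in A$ satisfies $z\precsim x^{*}$ and $z\precsim y$. Using the definition of $\precsim$ (Remark \ref{rem:giambattista}) together with Definition \ref{def:nmod}(c),(d) and $\varphi\ra\psi:=(\varphi\circ\psi^{*})^{*}$, this says exactly that $x\pp y^{*}$, $z\pp x$ and $z\pp y^{*}$, equivalently $x\ra y,\ z\ra x^{*},\ z\ra y\in F_{\pp}$; and by Remark \ref{rem:prop1} also $x\circ y,\ z\circ x^{*},\ z\circ y\in F_{\pp}$. So I would have three entailments ``holding'', the three corresponding ``compatibility'' statements true, and the transitivity rule \eqref{a8} (equivalently the defining condition of $\mathfrak{N}_{w}^{1}$) at my disposal.

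The core of the proof is then to extract triviality of $\alga$ from these data by a computation of the same flavour as the one in the proof of Proposition \ref{prop:pizzy}. Concretely, I would start from Definition \ref{def:nmod}(a) applied to a carefully chosen $\otimes$-product $\pi$ formed from $x,y,z$, pass to $\pi\circ\pi^{*}\pp\tr$ via Definition \ref{def:nmod}(c), use the exchange law \eqref{ex} to reshuffle the factors of $\pi$, and then substitute for one factor a term of the shape $\bigl((a\ra b)\otimes(b\ra a)\bigr)^{*}$ with $a,b$ arbitrary; after this substitution, the hypotheses $x\ra y,\ z\ra x^{*},\ z\ra y\in F_{\pp}$ and the derived compatibility facts, combined with Remark \ref{rem:prop1}, Definition \ref{def:nmod}(d),(e),(g), condition (di), and \eqref{a8}, should collapse the resulting incompatibility to $(a\ra b)\otimes(b\ra a)\pp\fa$; by Definition \ref{def:nmod}(e) this gives $a\ra b,\ b\ra a\in F_{\pp}$, i.e.\ $a\precsim b$ and $b\precsim a$, so $a=b$ by antisymmetry and $\alga$ is trivial, contradicting our standing assumption. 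Hence no common lower bound $z$ of $x^{*}$ and $y$ exists when $x\precsim y$, which is the claim.

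The delicate point is the middle step: one must choose $\pi$ and the substituted term so that the three hypotheses (rather than an unavailable single ``$p\pp\fa$ and $p^{*}\pp\fa$'', as in Proposition \ref{prop:pizzy}) perform the needed cancellation. That the algebraic clauses of a $\mathfrak{N}_{w}^{1}$-model really must be used is clear, since order theory alone, even granting transitivity of $\precsim$ and irreflexivity of $\pp$, does not suffice: one can exhibit a poset with antitone involution and irreflexive orthogonality in which the configuration $x\precsim y$, $z\precsim x^{*}$, $z\precsim y$ occurs. Once the right terms are in place, the remaining argument is a routine chase through the defining conditions. A more structural alternative would be to observe, via Proposition \ref{prop:ortholat} and Remark \ref{rem:asd}, that such a $z$ would place the non-zero element $\{z\}^{\pp\pp}$ below $h(x)^{\pp}\wedge h(y)$ in $C(\mathcal{M})$ while $h(x)\subseteq h(y)$, and then to derive a contradiction from the partially ordered commutative involutive residuated groupoid structure of Proposition \ref{lem:partordsemgroup} together with irreflexivity of $\pp$; but the computations involved are essentially the same.
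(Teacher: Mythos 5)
Your setup is sound: the reduction to non-trivial models, the derivation of irreflexivity of $\pp$ from Proposition \ref{prop:pizzy} and Remark \ref{rem:prop1}, and the unwinding of the hypothesis into $x\pp y^{*}$, $z\pp x$, $z\pp y^{*}$ are all correct. But the proof has a genuine gap exactly where you flag "the delicate point": you never exhibit the product $\pi$ nor the substitution that is supposed to make the three hypotheses "collapse the resulting incompatibility to $(a\ra b)\otimes(b\ra a)\pp\fa$". That collapse is the entire content of the proposition; asserting that a computation of the same flavour as Proposition \ref{prop:pizzy} "should" work is not a proof, and it is far from clear that the single-substitution scheme you describe (one factor of $\pi$ replaced by $((a\ra b)\otimes(b\ra a))^{*}$) can absorb three independent orthogonality hypotheses the way the two hypotheses $p\pp\fa$, $p^{*}\pp\fa$ are absorbed in Proposition \ref{prop:pizzy}.

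The paper closes this gap with a specific construction that your sketch does not contain: working in the partially ordered commutative involutive residuated groupoid of Proposition \ref{lem:partordsemgroup}, it considers the element $w:=(y^{*}\oplus z)\otimes x$. From $x\precsim y$ one gets $y^{*}\oplus z\precsim x^{*}\oplus z=x\supset z$, hence $w\precsim(x\supset z)\otimes x\precsim z$ by residuation; and by a second residuation computation $w\precsim z^{*}$ is equivalent to $x\otimes z\precsim y\otimes z^{*}$, which follows by monotonicity from $z\precsim y$ and $x\precsim z^{*}$. Thus $w\precsim z$ and $w\precsim z^{*}$, which contradicts irreflexivity (via $w\precsim z\precsim w^{*}$). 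Your closing "structural alternative" points in this direction, but again defers the computation. To repair your proof you would need either to carry out this residuation argument explicitly or to actually produce the term $\pi$ and verify the cancellation you postulate; as written, the key step is missing.
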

\begin{proof}Let $x\precsim y$.  Suppose by way of contradiction that there is $z\in A$ such that $z\precsim x^{*}$ and $z\precsim y$. One has that $y^{*}\precsim x^{*}$, so $y^{*}\oplus z\leq x^{*}\oplus z$. Therefore, $(y^{*}\oplus z)\otimes x\leq(x^{*}\oplus z)\otimes x\leq z$. However, one has also $(y^{*}\oplus z)\otimes x\precsim z^{*}$. Indeed, we have $(y^{*}\oplus z)\otimes x\precsim z^{*}$ iff $y^{*}\oplus z\precsim x^{*}\oplus z^{*}$ iff $x\otimes z\precsim y\otimes z^{*}$. One has $x\precsim z^{*}$ and $z\precsim y$ implies $x\otimes z\precsim x\otimes y\precsim z^{*}\otimes y$. We conclude that $(y^{*}\oplus z)\otimes x\precsim z,z^{*}$ which is impossible.
\end{proof}
\begin{corollary}For any non-trivial $(\alga,\pp,\{\tr,\fa\})\in\mathfrak{N}_{w}^{1}$, $\mathbb{DM}(\mathcal{I}(\alga))=(\mathrm{DM}(\mathcal{I}(\alga)),\land,\lor,{}^{*},\emptyset,A)$ is \emph{not} orthomodular.
\end{corollary}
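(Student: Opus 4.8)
The plan is to transport Proposition~\ref{prop:d11} into the completion via the orthoembedding $h\colon\mathcal{I}(\alga)\hookrightarrow\mathscr{O}^{\mathcal{M}}$, $x\mapsto\{x\}^{\pp\pp}$, of Proposition~\ref{prop:ortholat}; since $\mathscr{O}^{\mathcal{M}}$ is orthoisomorphic to $\mathbb{DM}(\mathcal{I}(\alga))$ and orthomodularity is an orthoisomorphism invariant, it suffices to refute it in $\mathscr{O}^{\mathcal{M}}$. Write $\bar{x}:=\{x\}^{\pp\pp}=h(x)$; by Remark~\ref{rem:asd} and $X^{\pp\pp\pp}=X^{\pp}$ one has $\bar{x}^{\,\pp}=\{x\}^{\pp}=\overline{x^{*}}$, and meets in $\mathscr{O}^{\mathcal{M}}$ are intersections, so $\bar{x}^{\,\pp}\wedge\bar{y}=\{x\}^{\pp}\cap\{y^{*}\}^{\pp}$. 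A member $w\in A$ of this set would satisfy $w\pp x$ and $w\pp y^{*}$, i.e.\ $w\precsim x^{*}$ and $w\precsim y$; when $x\precsim y$, Proposition~\ref{prop:d11} forbids such a $w$, so $\bar{x}^{\,\pp}\wedge\bar{y}$ is a closed subset of $A$ with no elements, hence $\emptyset=\bar{0}$.

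The immediate consequence I would record is: \emph{whenever $x\precsim y$ holds in $\mathcal{I}(\alga)$}, the images satisfy $\bar{x}\le\bar{y}$ and $\bar{x}^{\,\pp}\wedge\bar{y}=\bar{0}$ in $\mathscr{O}^{\mathcal{M}}$. Since $h$ is an order embedding and $\precsim$ is antisymmetric (Definition~\ref{def:nmod}(b)), $\bar{x}=\bar{y}$ forces $x=y$. Therefore, if $\mathcal{I}(\alga)$ contains \emph{a single} strictly comparable pair $x\precsim y$ with $x\ne y$, then $\bar{x}<\bar{y}$, $\bar{x}^{\,\pp}\wedge\bar{y}=\bar{0}$ and $\bar{x}\ne\bar{y}$, directly contradicting the orthomodular law; so $\mathscr{O}^{\mathcal{M}}$, and hence $\mathbb{DM}(\mathcal{I}(\alga))$, is not orthomodular.

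It thus only remains to prove that a \emph{non-trivial} $\mathfrak{N}_{w}^{1}$-model cannot have $\mathcal{I}(\alga)$ an antichain, and this is the step I expect to carry the real weight. I would argue by contradiction: if $\precsim$ is equality, then $u\precsim v\Leftrightarrow u=v$, so the residuation law of Proposition~\ref{lem:partordsemgroup}, $x\otimes y\precsim z\Leftrightarrow x\precsim(y\supset z)$, becomes $x\otimes y=z\Leftrightarrow x=y\supset z$; hence, for each fixed $y$, the translation $g_{y}\colon x\mapsto x\otimes y$ is a \emph{bijection} of $A$, with inverse $z\mapsto y\supset z$. On the other hand $F_{\pp}$ is a non-empty proper subset of $A$: it contains every $(a\circ a^{*})^{*}$ (the value of the $\nel$-theorem $\varphi\ra\varphi$), and $F_{\pp}=A$ would make every element incompatible with both $\tr$ and $\fa$ (Definition~\ref{def:nmod}(d)), so that $a\circ a\pp\tr$ and hence, by Definition~\ref{def:nmod}(c), $a\pp a$ for all $a$ --- impossible, because $\pp$ is irreflexive (apply Proposition~\ref{prop:d11} with $y=z=x$ to the instance $x\precsim x$ of Definition~\ref{def:nmod}(a)). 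Now pick $y\in A\setminus F_{\pp}$; by Definition~\ref{def:nmod}(e), $x\otimes y\notin F_{\pp}$ for \emph{every} $x$, so $g_{y}(A)\subseteq A\setminus F_{\pp}\subsetneq A$, contradicting the surjectivity of $g_{y}$. Hence $\mathcal{I}(\alga)$ is not an antichain, and the corollary follows from the second paragraph.

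The routine items --- $\mathscr{O}^{\mathcal{M}}\cong\mathbb{DM}(\mathcal{I}(\alga))$, the identity $\bar{x}^{\,\pp}=\overline{x^{*}}$, the explicit shapes of $\supset$ and of $F_{\pp}$, the orthoisomorphism invariance of orthomodularity --- are all available from the material preceding the statement. The genuine obstacle is the last step: every order-theoretic ``defect'' detected by Proposition~\ref{prop:d11} is vacuous on an antichain (the Dedekind--MacNeille completion of an antichain with antitone involution is an ortholattice of height at most two, which \emph{is} orthomodular), so one must genuinely use the interplay between residuation (Proposition~\ref{lem:partordsemgroup}), the behaviour of $F_{\pp}$ under $\otimes$ (Definition~\ref{def:nmod}(e)) and irreflexivity; extracting the bijectivity of $g_{y}$ cleanly from Proposition~\ref{lem:partordsemgroup} in that degenerate case is the delicate point.
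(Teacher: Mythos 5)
Your argument is correct, and its overall skeleton coincides with the paper's: both proofs exhibit a strictly comparable pair $u\prec v$ in $\mathcal{I}(\alga)$ and then use Proposition~\ref{prop:d11} to conclude $\{u\}^{\pp}\cap\{v\}^{\pp\pp}=\emptyset$ while $\{u\}^{\pp\pp}\subsetneq\{v\}^{\pp\pp}$, which refutes orthomodularity in $\mathscr{O}^{\mathcal{M}}\cong\mathbb{DM}(\mathcal{I}(\alga))$. Where you genuinely diverge is in producing the pair. The paper gets it in one line from Proposition~\ref{prop:df}: in a non-trivial model $(a\circ b)\ra(a\ra b)$ fails for some $a,b$, while axiom (A4) always gives $a\ra b\precsim a\circ b$, so $a\ra b\prec a\circ b$ strictly. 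You instead rule out the antichain case abstractly: if $\precsim$ were equality, the residuation of Proposition~\ref{lem:partordsemgroup} would make each $x\mapsto x\otimes y$ a bijection with inverse $z\mapsto y\supset z$, yet for $y\notin F_{\pp}$ Definition~\ref{def:nmod}(e) confines its range to $A\setminus F_{\pp}$, and $F_{\pp}$ is a non-empty proper subset of $A$ (non-emptiness from (A1), properness from irreflexivity of $\pp$, which you correctly extract from Proposition~\ref{prop:d11} for non-trivial models). Every step of this checks out; it is longer and carries the real weight of your proof, but it is self-contained and yields the slightly stronger structural fact that $\mathcal{I}(\alga)$ is never discretely ordered, whereas the paper's route is shorter, reuses an earlier result, and hands you a concrete witness pair $a\ra b\prec a\circ b$. (Your parenthetical that the completion of an antichain would be an orthomodular ortholattice of height two is only a motivating aside and tacitly uses $x\ne x^{*}$, which does hold here by irreflexivity; it plays no role in the correctness of the argument.)
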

\begin{proof} Observe that, by Proposition \ref{prop:df}, there are $a,b\in A$ such that $a\ra b \precsim a\circ b$ but $a\circ b\not\precsim a\ra b$. Therefore, $\{a\ra b\}^{\pp\pp}\subsetneq\{a\circ b\}^{\pp\pp}$. However, by Proposition \ref{prop:d11}, $\{(a\ra b)\}^{\pp}\cap\{a\circ b\}^{\pp\pp}=\emptyset$. 
\end{proof}
In general, we have the following result. Given a partially ordered set $(P,\leq)$, $x,y\in P$ are said to be \emph{incomparable}, written $x\parallel y$, provided that $x\not\leq y$ and $y\not\leq x$. Moreover, $x,y$ are said to be \emph{comparable}, written $x\not\parallel y$, if they are not incomparable.
\begin{proposition}\label{prop:topogigio}For any $(\alga,\pp,\{\tr,\fa\})\in\mathfrak{N}_{w}^{1}$, $\mathcal{I}(\alga)$ is such that, for any $x,y,z\in A$:
\[x\not\parallel z^{*}\text{ and }y\not\parallel z\text{ imply }L(x,y)=\emptyset=U(x,y).\]
\end{proposition}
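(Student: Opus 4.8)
The plan is to reduce everything to a few applications of Proposition~\ref{prop:d11}, using that $\mathcal{I}(\alga)=(A,\precsim,{}^{*})$ is a poset with antitone involution (Remark~\ref{rem:giambattista}). First I would record the form of Proposition~\ref{prop:d11} that is convenient here: instantiating its universally quantified variable, it says that \emph{whenever $p\precsim q$ one has $L(p^{*},q)=\emptyset$}; and, taking $p=q=z$ (legitimate since $\precsim$ is reflexive), it gives in particular $L(z,z^{*})=\emptyset$ for every $z\in A$. I would also observe two facts that follow purely from ${}^{*}$ being an antitone involution: (a) $w\in U(x,y)$ iff $w^{*}\in L(x^{*},y^{*})$, so $U(x,y)=\emptyset$ iff $L(x^{*},y^{*})=\emptyset$; and (b) the hypothesis ``$x\not\parallel z^{*}$ and $y\not\parallel z$'' is invariant under the substitution $(x,y,z)\mapsto(x^{*},y^{*},z^{*})$. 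By (a) and (b) it is enough to prove $L(x,y)=\emptyset$: the assertion for $U(x,y)$ then follows by applying that conclusion to the triple $(x^{*},y^{*},z^{*})$, which satisfies the same hypothesis.

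Next I would split into the four configurations arising from the two disjunctions in the hypothesis. If $x\precsim z^{*}$ and $y\precsim z$, then every $w\in L(x,y)$ satisfies $w\precsim x\precsim z^{*}$ and $w\precsim y\precsim z$, so $w\in L(z,z^{*})=\emptyset$. If $z^{*}\precsim x$ and $z\precsim y$, then antitonicity gives $x^{*}\precsim z\precsim y$, hence $x^{*}\precsim y$ and $L(x,y)=L((x^{*})^{*},y)=\emptyset$ by Proposition~\ref{prop:d11} (applied with $p=x^{*}$, $q=y$). If $x\precsim z^{*}$ and $z\precsim y$, then every $w\in L(x,y)$ satisfies $w\precsim x\precsim z^{*}$, so $w\in L(z^{*},y)$; but $z\precsim y$ forces $L(z^{*},y)=\emptyset$ by Proposition~\ref{prop:d11} (with $p=z$, $q=y$), whence $L(x,y)=\emptyset$. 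Finally, if $z^{*}\precsim x$ and $y\precsim z$, then every $w\in L(x,y)$ satisfies $w\precsim y\precsim z$, so $w\in L(z,x)$, and $z^{*}\precsim x$ forces $L(z,x)=\emptyset$ by Proposition~\ref{prop:d11} (with $p=z^{*}$, $q=x$). These four cases are exhaustive, so $L(x,y)=\emptyset$ always, which together with the first paragraph completes the proof.

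I do not expect a genuine obstacle here, since the substance of the argument is already contained in Proposition~\ref{prop:d11}. The only points deserving attention are the bookkeeping of the directions of the inequalities when passing through ${}^{*}$, the verification that the four comparability configurations exhaust all possibilities, and the recurring observation that, after at most one use of antitonicity, a lower bound of $\{x,y\}$ always becomes a common lower bound of one of the pairs to which Proposition~\ref{prop:d11} (in the form $p\precsim q\Rightarrow L(p^{*},q)=\emptyset$) applies. If one prefers to avoid the $*$-duality shortcut of the first paragraph, one may instead treat $L(x,y)$ and $U(x,y)$ directly in each of the four cases, which merely doubles the (still elementary) casework.
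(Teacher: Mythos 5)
Your proof is correct and follows essentially the same route as the paper: both reduce the claim to Proposition~\ref{prop:d11} via a case analysis on the four comparability configurations and both obtain $U(x,y)=\emptyset$ from $L(x^{*},y^{*})=\emptyset$ by the same ${}^{*}$-duality. Your version is in fact slightly tidier, since the paper's preliminary step establishing $x\parallel y$ is subsumed by your four cases (a common lower bound of a comparable pair $x,y$ would already contradict them), and your explicit instantiations of Proposition~\ref{prop:d11} make the bookkeeping easier to check.
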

\begin{proof}
First, let us observe that $x\not\parallel z^{*}\text{ and }y\not\parallel z$ entails that $x\not\precsim y$ and $y\not\precsim x$. Indeed, let us assume that $x\precsim z^{*}$. If $y\precsim x$, then $y\precsim z^{*}$. However, this entails that $y\not\precsim z$ (otherwise $y\precsim z,z^{*}$ which is impossible) and $z\not\precsim y$ as well otherwise one would have $z\precsim z^{*}$. If $x\precsim y$ then $z\precsim y$ is excluded by Proposition \ref{prop:d11}, otherwise one would have $z\precsim x^{*}$ and $z\precsim y$. The case  $y\precsim z$ is impossible as well, since, otherwise we would have $x\precsim z,z^{*}$. As regards the case $z^{*}\precsim x$, observe that if $x\precsim y$ one has that $y\not\precsim z$. So $z\precsim y$. However this entails that $y^{*}\precsim z^{*}\precsim x\precsim y$ which is again impossible. If $y\precsim x$, then clearly $z\not\precsim y$. However, this means that $y\precsim z$ which is excluded again by Proposition \ref{prop:d11}, since we would have $x^{*}\precsim z$, $y\precsim x^{**}$ and $y\precsim z$. 
Therefore, we can suppose that $x\parallel y$. Let us consider cases depending on that (a) $x\precsim z^{*}$ or (b) $z^{*}\precsim x$.\\
(a) If $x\precsim z^{*}$, we observe that the case $y\precsim z$  is trivial since, otherwise, one would have a $c\precsim z,z^{*}$ which is impossible. So we confine ourselves to consider the case $z\precsim y$. In this case, if $c\precsim x, y$, then one would have that $c\precsim z^{*}$ and $c\precsim y$ which is excluded by Proposition \ref{prop:df}.\\ 
(b) If $z^{*}\precsim x$, then if $z\precsim y$, one would have $y^{*}\precsim x$ and so the existence of $c\precsim y,x$ is excluded by Proposition \ref{prop:df}. If $y\precsim z$, then  $c\precsim x,y$ implies $c\precsim z$ and $c\precsim x$ which is again impossible.\\
Now, observe that $x\not\parallel z$ and $y\not\parallel z^{*}$ iff $x^{*}\not\parallel z^{*}$ and $y^{*}\not\parallel z$. From the first part of the proof it follows that $L(x^{*},y^{*})=\emptyset$ and so $U(x,y)=\emptyset$.
\end{proof}
In other words, for any $\mathfrak{N}_{w}^{1}$-model, any pair of propositions which are comparable w.r.t. contradictory propositions cannot contain, or be contained in a common proposition. 

We observe that $\mathfrak{N}_{w}^{1}$-models yield further logics that, in our opinion, are worth of attention. Let us define the relation $\vdash_{\pp}\ \subseteq\mathcal{P}(\mathrm{Fm}_{\mathcal{L}})\times\mathrm{Fm}_{\mathcal{L}}$ such that, for any $\Gamma\cup\{\varphi\}\subseteq\mathrm{Fm}_{\mathcal{L}}$, $\Gamma\vdash_{\pp}\varphi$ if and only if there is $\Gamma'=\{\gamma_{1},\dots,\gamma_{n}\}\subseteq\Gamma$ ($n\geq 1$) such that, for any $(\alga,\pp,\{\tr,\fa\})\in\mathfrak{N}_{w}^{1}$, and $h\in\mathbf{Hom}(\mathbf{Fm}_{\mathcal{L}},\alga)$
\[\{h(\gamma_{1})^{*},\dots,h(\gamma_{n})^{*}\}^{\pp}\subseteq\{h(\varphi)^{*}\}^{\pp},\] and we set $\vdash_{\pp}\varphi$ if and only if $\{h(\varphi)^{*}\}^{\pp}=A$ (for any $h\in\mathbf{Hom}(\mathbf{Fm}_{\mathcal{L}},\alga)$).
It can be seen that $\vdash_{\pp}$ is a sentential logic that, once interpreted according to a Nelsonian perspective, it expresses the fact that a formula $\varphi$ is a consequence of a set $\Gamma$ of assumptions provided that, for some finite $\Gamma'\subseteq\Gamma$, and for any interpretation $h$ of formulas into a $\mathfrak{N}_{w}^{1}$-model, any proposition ``containing'' the ``meaning'' of formulas in $\Gamma'$ contains also the meaning of $\varphi$ under the same interpretation. Note that, for any $\varphi,\psi\in\mathrm{Fm}_{\mathcal{L}}$, one has
$\varphi\vdash_{\pp}\psi$ iff for any $\mathfrak{N}_{w}^{1}$-model $\mathcal{M}=(\alga,\pp,\{\tr,\fa\})$, and any $h\in\mathbf{Hom}(\alga,\mathbf{Fm}_{\mathcal{L}})$, one has $\{h(\varphi)^{*}\}^{\pp}\subseteq\{h(\psi)^{*}\}^{\pp}$ iff  $h(\varphi)\pp h(\psi)^{*}$ iff   $h(\varphi\ra\psi)\pp\fa$. This means that $\varphi\vdash_{\pp}\psi$ if and only if $\vdash_{\mathsf{NeL}^{1}}\varphi\ra\psi$. However we have also that $\vdash_{\nel^{1}}\ \not\subseteq\ \vdash_{\pp}$ since, otherwise, one would have that $\vdash_{\mathsf{NeL}^{1}}(\varphi\otimes\psi)\ra\psi$ which is impossible (recall Proposition \ref{prop:trivial}). Moreover, $\vdash_{\pp}$ is \emph{theoremless}, since in no non-trivial $\mathfrak{N}_{w}^{1}$-model there are elements which are incompatible with themselves. Actually, $\vdash_{\pp}$ is extremely weak. For example, $\vdash_{\pp}$ does not satisfy the inference rule $\varphi,\varphi\ra\psi\vdash\psi$. Indeed, upon considering the structure $\mathcal{M}=(\alga=(\{a,b,c,d,\},\otimes,\circ,{}^{*}),\pp,\{\tr,\fa\})$ such that $\alga$ is determined according to the Cayley tables
\begin{center}
\begin{tabular}{c| c c c c } 
$\circ$ & a & b & c & d \\ 
\hline  
a& b & a & a & b \\ 
b& a & b & b & c  \\
c& a & b & b & a  \\
d& b & c & a & b \\
\hline 
\end{tabular}\quad\quad
\begin{tabular}{c| c c c c } 
$\otimes$ & a & b & c & d\\
\hline  
a &  a & c & c & a\\ 
b&  c&  d& a& b\\
c&  c&  a& a& c\\
d&  a&  b& c& d\\
\hline 
\end{tabular}\quad\quad
\begin{tabular}{c| c } $\ast$ &  \\ 
\hline  
a &  b\\ 
b &  a\\
c &  d\\
d &  c\\
\hline 
\end{tabular}
\end{center}
and such that $\pp=\{(b,\fa),(d,\fa),(a,\tr),(c,\tr),(a,b),(a,c),(c,a),(b,a),(c,d),(d,c)\}$ one has that $a\pp a^{*}=b$, $a\pp (a\ra b)^{*}=a\circ a=b$ but $a\not\pp b^{*}=a$. This means that a suitable homomorphism $h$ such that $x\mapsto a$ and $y\mapsto b$ witnesses the failure of $x,x\ra y\vdash_{\pp}y$. Similarly, one can prove that $\varphi,\varphi\supset\psi\not\vdash_{\pp}\psi$. This can be seen upon considering the structure $\mathcal{M}=(\alga=(\{a,b,c,d,\},\otimes,\circ,{}^{*}),\pp,\{\tr,\fa\})$ this time such that $\alga$ is determined according to the Cayley tables
\begin{center}
\begin{tabular}{c| c c c c } 
$\circ$ & a & b & c & d \\ 
\hline  
a& c & d & c & d \\ 
b& d & c & b & c  \\
c& c & b & c & d  \\
d& d & c & d & c \\
\hline 
\end{tabular}\quad\quad
\begin{tabular}{c| c c c c } 
$\otimes$ & a & b & c & d\\
\hline  
a &  d & a & d & a\\ 
b&  a&  b& c& d\\
c&  d&  c& b& a\\
d&  a&  d& a& d\\
\hline 
\end{tabular}\quad\quad
\begin{tabular}{c| c } $\ast$ &  \\ 
\hline  
a &  b\\ 
b &  a\\
c &  d\\
d &  c\\
\hline 
\end{tabular}
\end{center}
and such that $\pp=\{(b,\fa),(d,\tr),(a,\tr),(c,\fa),(a,b),(a,d),(d,a),(b,a),(c,d),(d,c)\}$. One has $a\pp a^{*}=b$ and $a\pp (a^{*}\oplus b)^{*}= (b\oplus b)^{*}=c^{*}=d$ but, again, $a\not\pp b^{*}=a$.

A further sentential logic based on $\mathfrak{N}_{w}^{1}$-models can be defined as follows.  We set 
\[\Gamma\vdash_{\pp^{1}}\varphi\text{ iff either }\models_{\mathfrak{N}_{w}^{1}}\varphi\text{ or there are }\gamma_{1},\dots,\gamma_{n}\in\Gamma\text{ such that }\models_{\mathfrak{N}_{w}^{1}}(\gamma_{1}\otimes\dots\otimes\gamma_{n})\ra\varphi.\]
As customary, if $\models_{\mathfrak{N}_{w}^{1}}\varphi$, we write $\vdash_{\pp^{1}}\varphi$. Note that, although they have the same theorems, $\vdash_{\pp^{1}}\ \subsetneq\ \vdash_{\nel^{1}}$. Indeed, for any $\Gamma\cup\{\varphi\}\subseteq\mathrm{Fm}_{\mathcal{L}}$, one has that, if there are $\gamma_{1},\dots,\gamma_{n}\in\Gamma$ such that $\models_{\mathfrak{N}_{w}^{1}}\gamma_{1}\otimes\dots\otimes\gamma_{n}\ra\varphi$, then, for any $\mathfrak{N}_{w}^{1}$-model $(\alga,\pp,\{\tr,\fa\})$ and any  evaluation $h\in\mathbf{Hom}(\mathbf{Fm}_{\mathcal{L}},\alga)$,  $h(\gamma_{i})\pp\fa$ ($1\leq i\leq n$) implies $h(\gamma_{1}\otimes\dots\otimes\gamma_{n})\pp\fa$ and so $h(\varphi)\pp\fa$. However, it can be seen that e.g. the inference rule $\varphi,\varphi\ra\psi\vdash\psi$ is not a valid inference rule of $\vdash_{\pp^{1}}$. Indeed, upon considering the structure $\mathcal{M}=(\alga=(\{a,b,c,d,e,f\},\otimes,\circ,{}^{*}),\pp,\{\tr,\fa\})$ from Example \ref{ex.ceci}, one has that 
$(a\otimes (a\ra a))\ra a=c\ra a = (c\circ a^{*})^{*}=(c\circ b)^{*}=a^{*}=b\not\pp\fa$. Nevertheless, $\varphi,\varphi\supset\psi\vdash_{\pp^{1}}\psi$ holds. See Proposition \ref{lem:partordsemgroup}.

We leave to the reader the burden of deciding whether logics outlined in this last part of the paper deserve further investigation. We confine ourselves to point out that, in spite of its (all in all)  simplicity, the framework we deal with is rich enough to provide consequence relations based on Nelson's concepts even different from Nelsonian logics outlined so far.

We close this section with a graphical summary depicted in Figure \ref{fig:posetextensions} of extensions of $\nl$ encountered in this paper. Lines denote inclusion.
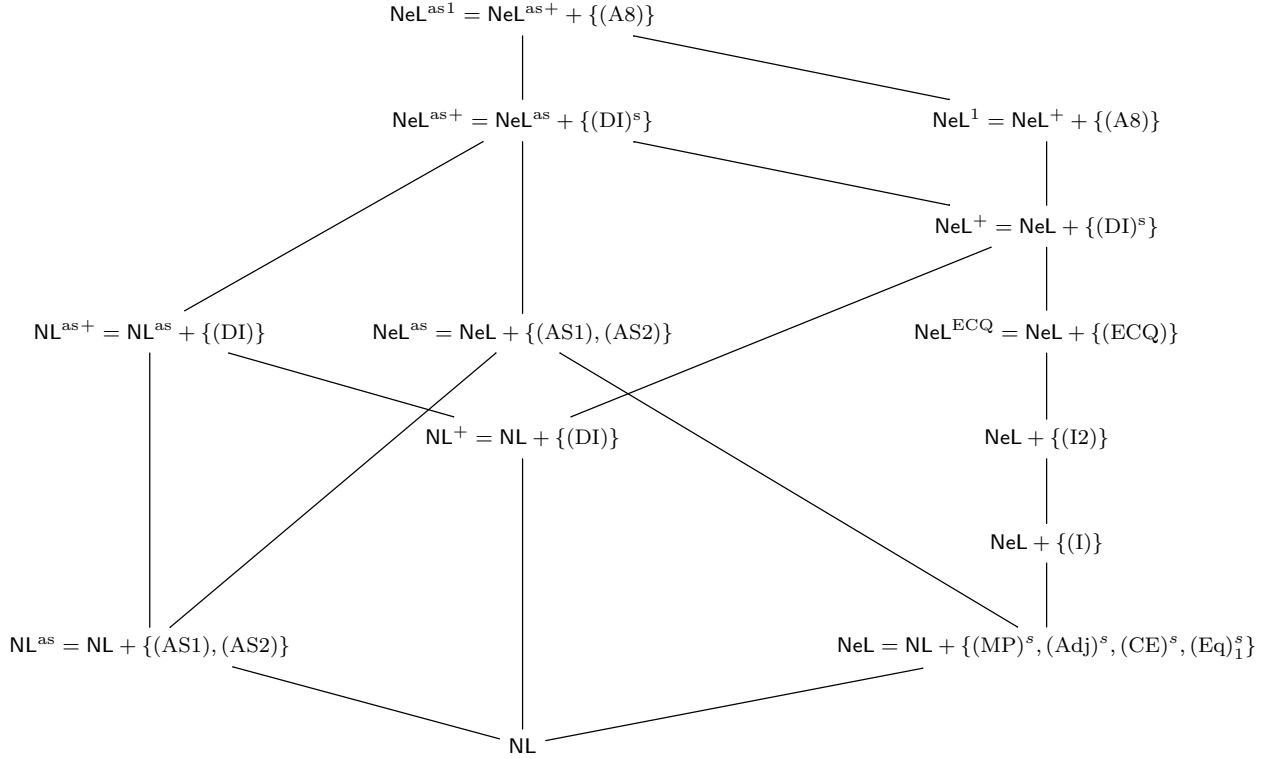
\begin{figure}[!htbp]
\[
\tiny{
\xymatrix{
&\nelas^{1}=\nelas^{+}+\{\mathrm{(A8)}\}\ar@{-}[drr]\ar@{-}[d]&&\\
&\nelas^{+}=\nelas+\{\mathrm{(DI)^{s}}\}\ar@{-}[dd]\ar@{-}[ddl]\ar@{-}[drr]&&\nel^{1}=\nel^{+}+\{\mathrm{(A8)}\}\ar@{-}[d]\\
&&&\nel^{+}=\nel+\{\mathrm{(DI)^{s}}\}\ar@{-}[d]\ar@{-}[ddll]\\
\nlas^{+}=\nlas+\{\mathrm{(DI)}\}\ar@{-}[ddd]\ar@{-}[dr] &\nelas=\nel+\{\eqref{as1},\eqref{as2}\}\ar@{-}[dddl]\ar@{-}[dddrr]&&\nel^{\mathrm{ECQ}}=\nel+\{\eqref{encq}\}\ar@{-}[d]\\
&\nl^{+}=\nl+\{\mathrm{(DI)}\}\ar@{-}[ddd]&&\nel+\{\eqref{i2}\}\ar@{-}[d]\\
&&&\nel+\{\eqref{i}\}\ar@{-}[d]\\
\nlas=\nl+\{\eqref{as1},\eqref{as2}\}\ar@{-}[dr]&&&\nel=\nl+\{\mathrm{(MP)}^{s},\mathrm{(Adj)}^{s},\mathrm{(CE)}^{s},\mathrm{(Eq)}_{1}^{s}\}\ar@{-}[dll]\\
&\nl&&
}}
\]
\caption{A summary of some extensions of $\nl$ ordered under set inclusion.}
\label{fig:posetextensions}
\end{figure}
\section{Conclusion}\label{sec:concl}
This work has been devoted to a preliminary investigation of logics arising from Everett John Nelson's early works. Our first aim has been providing a semantics for the logic $\nl$ outlined Nelson's \cite{Nelson3,Nelson1}, and for some of its prominent extensions, namely $\nlas,\nel,\nelas$. It has been shown that these systems are sound and complete with respect to (sub)classes of structures consisting of an algebraic and a relational part: $\mathfrak{N}_{w}$-models. As an output, it turns out that Nelson's logics admit a quite natural notion of truth.\\
Subsequently, we have focused on extensions of Nelsonian logics which, in our opinion, are able to grasp better than the previously introduced ones some of the key features of Nelson's thought. Specifically, we have investigated extensions of Nelsonian logics by means of inference rules ensuring the irreflexivity of incompatibility relations their models are endowed with, thus enjoying properties of prominent importance for connexive logic. Think e.g. to strong connexivity. Among logics investigated in this paper, we have introduced $\nl^{+}$ and $\nel^{+}$ ($\nlas^{+},\nelas^{+}$) whose motivation is strongly rooted in Nelson's arguments.\\
Finally, we dealt with the logic $\nel^{1}$ (and its extensions), namely the logic obtained upon extending $\nel^{+}$ with an inference rule encoding a weak form of transitivity of entailment. It turns out that the semantics of $\nel^{1}$ allows us to establish somewhat surprising relationships between algebraic-relational models of some Nelsonian logics and the order theory of structures, in particular ortholattices, which have been deeply investigated in a seemingly unrelated context: quantum logic. 

$\nl$ is the first example of a full-fledged connexive system in the whole history of connexive logic. Its extensions have an intuitive (although not unproblematic) motivation, and a (all in all) transparent semantics. Therefore, beside providing solutions to problems already scattered along the paper, we believe that a further investigation into properties of systems discussed so far might be worth of being pushed forward in several research directions.
\begin{itemize}
 \item In our opinion, it would be interesting to investigate further extensions of systems we dealt with in this paper with axioms codifying Nelson's ideas about entailment that we have not analyzed in this work. For example, as Nelson states in \cite[p. 74]{Nelson3}, while conjunction assumed by \emph{Principia Mathematica} presupposes an ``extrinsic'' relation between truth values (a conjunction is true if and only if each of its conjunct is true) intensional conjunction presupposes that ``If $pq$ holds, then $p$ and $q$ are consistent; and if $p$ and $q$ are not consistent, then $pq$ fails''. This fact can be rendered formally by the following axiom scheme:
\[(\varphi\otimes\psi)\ra(\varphi\circ\psi)\tag{A9}\label{a9}\] which, in view of axioms and inference rules  of $\nl^{+}$, is equivalent to \[(\varphi\otimes(\varphi\ra\psi))\ra\psi.\tag{$\mathrm{A9}^{*}$}\label{a9ast}\] Given the remarks in the previous section, \eqref{a9} is not provable in any of the systems outlined in this paper.  From a Nelsonian perspective, this could be seen as desirable, since assuming \eqref{a9} and so \eqref{a9ast} would conflict with Nelson's view on conjunction (think e.g. to the formula obtained from the replacement of $\psi$ by $\varphi$ in \eqref{a9ast}).  However, we believe that logics such as $\mathsf{L}+\{\eqref{a9}\}$ ($\mathsf{L}\in\{\nl^{+},\nlas^{+},\nel^{+},\nelas^{+}\}$) could constitute interesting subjects for future research.
\item {A further and perhaps one of the most important issues to be addressed in the future is investigating the logical system resulting from modifying $\nl$ and its extensions according to  Nelson's considerations in \cite{Nelson2}, namely by admitting \eqref{simpl} and replacing (A6) with ($\mathrm{A6}^{*}$).} 
\item On the semantical side, beside providing a solution to Problem \ref{prob:algebraiz}, it would be of interest investigating the order-theory of bounded posets with antitone involution associated to $\mathfrak{N}_{w}^{1}$-models. For example, it might be  worthwhile to characterize posets with antitone involution which can be obtained from $\mathfrak{N}_{w}^{1}$-models by means of incompatibility relations.
\item As regards the most algebra-oriented aspects of our research, a simple but meaningful observation grants that any $\mathfrak{N}_{w}^{1}$-model yields a (non-lattice ordered) partially ordered commutative involutive residuated groupoid, see Proposition \ref{lem:partordsemgroup} and Proposition \ref{prop:topogigio}. Therefore, it naturally raises the question whether it is possible to characterize structures of this kind arising in the framework of Nelson's logics.
\item Finally, from the genuinely proof-theoretical perspective, it might be of interest providing an in-depth investigation of sequent calculi or tableaux systems for logics we dealt with in this paper with an eye to establishing their decidability.
\end{itemize}
\quad\\
\begin{large}
\textbf{Declarations} 
\end{large}
\quad\\
\quad\\
\textbf{Funding}:
This work has been funded by the European Union - NextGenerationEU, Mission 4, Component 1, under the Italian Ministry of University and Research (MUR) National Innovation Ecosystem grant ECS00000041 - VITALITY -  CUP: C43C22000380007.
\quad\\
\quad\\
\textbf{Acknowledgements}: The first author gratefully thanks Francesco Paoli for having introduced him to E. J. Nelson's logic,  and for the insightful discussions of both historical and more technical concern they had over the past years about the topics of the present work. Furthermore, both authors thank Pierluigi Graziani for his insightful comments concerning the subject of this paper, and two anonymous referees for comments that helped to improve the shape and content of the present work considerably.

\end{document}